\newtheorem{theorem}{Theorem}[section]
\newtheorem{lemma}[theorem]{Lemma}
\newtheorem{proposition}[theorem]{Proposition}
\DeclareMathOperator{\ex}{ex}
\newcommand\NN{{\mathbb N}}
\newcommand\PP{{\cal P}}
\newcommand\optlink{OPTLINK}
\newcommand\optineq{OPTINEQ}
\DeclareTextCompositeCommand{\v}{OT1}{l}{l\nobreak\hspace{-.1em}'}
\DeclareTextCompositeCommand{\v}{OT1}{t}{t\nobreak\hspace{-.1em}'\nobreak\hspace{-.15em}}
\begin{document}
\title{Uniform Tur\'an density of cycles\thanks{The work of the third and fourth authors has received funding from the European Research Council (ERC) under the European Union's Horizon 2020 research and innovation programme (grant agreement No 648509). This publication reflects only its authors' view; the European Research Council Executive Agency is not responsible for any use that may be made of the information it contains. The second, third and fourth authors were also supported by the MUNI Award in Science and Humanities of the Grant Agency of Masaryk University. The work of the fifth author is supported in part by the SNSF grant 200021\_196965.}}
\author{Matija Buci\'c\thanks{School of Mathematics, Institute for Advanced Study and Department of Mathematics, Princeton University, USA. Email:  {\tt matija.bucic@ias.edu}.}\and
        \newcounter{kth}
	\setcounter{kth}{2}
        Jacob W. Cooper\thanks{Faculty of Informatics, Masaryk University, 
        Botanick\'a 68A, 602 00 Brno,
        Czech Republic. E-mail: {\tt \{xcooper,dkral,mohr\}@fi.muni.cz}.}\and
        \newcounter{lth}
	\setcounter{lth}{3}
        Daniel Kr{\'a}\v{l}$^\fnsymbol{lth}$\and
	Samuel Mohr$^\fnsymbol{lth}$\and
  David Munh\'a Correia\thanks{Department of Mathematics, ETH Z\"urich, Switzerland. E-mail: {\tt david.munhacanascorreia@math.ethz.ch}.}}
\date{} 
\maketitle

\begin{abstract}
In the early 1980s, Erd\H{o}s and S\'os initiated the study of the classical Tur\'an problem with a uniformity condition:
the uniform Tur\'an density of a hypergraph $H$ is the infimum over all $d$ for which
any sufficiently large hypergraph with the property that all its linear-size subhyperghraphs have density at least $d$ contains $H$.
In particular, they raise the questions of determining the uniform Tur\'an densities of $K_4^{(3)-}$ and $K_4^{(3)}$.
The former question was solved only recently in [Israel J. Math. 211 (2016), 349--366] and [J. Eur. Math. Soc. 20 (2018), 1139--1159],
while the latter still remains open for almost 40 years.
In addition to $K_4^{(3)-}$, the only $3$-uniform hypergraphs whose uniform Tur\'an density is known
are those with zero uniform Tur\'an density classified by Reiher, R\"odl and Schacht~[J. London Math. Soc. 97 (2018), 77--97] and
a specific family with uniform Tur\'an density equal to $1/27$.

We develop new tools for embedding hypergraphs in host hypergraphs with positive uniform density and
apply them to completely determine the uniform Tur\'an density of a fundamental family of $3$-uniform hypergraphs,
namely tight cycles $C_\ell^{(3)}$.
The uniform Tur\'an density of $C_\ell^{(3)}$, $\ell\ge 5$, is equal to $4/27$
if $\ell$ is not divisible by three, and is equal to zero otherwise.
The case $\ell=5$ resolves a problem suggested by Reiher.
\end{abstract}

\section{Introduction}
\label{sec:intro}

One of the most central questions in extremal graph theory
is to determine the maximum number of edges in a graph avoiding a given graph $F$.
Formally, the \emph{Tur\'an number $\ex(n,F)$} of a (hyper)graph $F$
is the maximum number of edges in an $n$-vertex (hyper)graph that does not contain $F$ as a sub(hyper)graph.
The limiting behavior of Tur\'an numbers is captured by the concept of \emph{Tur\'an density} of a $k$-uniform hypergraph $F$,
which is defined as the limit of $\ex(n,F)/\binom{n}{k}$ (the existence of the limit is a classical result~\cite{KatNS64} from 1964),
i.e., the Tur\'an density is the maximum possible edge density of a large $F$-free hypergraph.
An equivalent, also frequently used, definition of Tur\'an density of $F$ is the infimum over all $d$ such that
any sufficiently large hypergraph with density at least $d$ must contain $F$.

Tur\'an numbers and Tur\'an densities are well-understood for graphs, i.e., $2$-uniform hypergraphs.
Indeed, the classical results of Mantel~\cite{Man07} and Tur\'an~\cite{Tur41}
give the Tur\'an number of complete graphs exactly, and
Erd\H os and Stone~\cite{ErdS46} determined
the Tur\'an density of any $r$-chromatic graph to be equal to $\frac{r-2}{r-1}$, see also~\cite{ErdS66}.
However, the analogous questions for hypergraphs are among the most challenging problems in extremal graph theory:
Erd\H os~\cite{Erd81} offered \$1\,000 for determining
the Tur\'an density of all complete $k$-uniform hypergraphs for $k\ge 3$ and
\$500 for determining the Tur\'an density of any single complete $k$-uniform hypergraph (with at least $k+1$ vertices).
However, even the smallest and likely the simplest case of
determining the Tur\'an density of the complete $3$-uniform hypergraph $K_4^{(3)}$ with four vertices has shrugged off attempts,
see in particular~\cite{FraF84,ChuL99}, on its resolution since its formulation by Tur\'an 80 years ago~\cite{Tur41} and
despite recent partial results~\cite{BabT11,Raz10} obtained using the flag algebra method of Razborov~\cite{Raz07}.
For further details, we refer the reader to the recent survey by Keevash~\cite{Kee11} or the survey by Sidorenko~\cite{Sid95}.

The Tur\'an density of the complete $3$-uniform hypergraph $K_4^{(3)}$ is conjectured to be equal to $5/9$ and
the following construction witnesses this as a lower bound:
let $V_1$, $V_2$ and $V_3$ be a balanced partition of $n$ vertices and
consider the $3$-uniform hypergraph that contains all edges $e$ such that
either $|e\cap V_i|=1$ for each $i=1,2,3$, or
$|e\cap V_i|=2$ and $|e\cap V_{i+1}|=1$ for some $i=1,2,3$ (the subscripts are modulo $3$).
Note that the edges in the construction are distributed in a highly uneven way,
in particular, each of the sets $V_i$ spans no edges.
This led Erd\H os and S\'os~\cite{ErdS82,Erd90} to propose studying Tur\'an problems
with an additional requirement that the edges of the host hypergraph are distributed uniformly.
This requirement is captured by the notion of the \emph{$\varepsilon$-linear} density of an $n$-vertex hypergraph $H$,
which is defined as the minimum edge density of an induced subhypergraph of $H$ with at least $\varepsilon n$ vertices.
Then, the \emph{uniform Tur\'an density} of a hypergraph $F$ is defined as the supremum over all $d$ such that
for all $\varepsilon>0$, there exists an arbitrarily large $F$-free hypergraph $H$ with the $\varepsilon$-linear density $d$.
So, unlike in the case of Tur\'an problems,
uniform Tur\'an problems require extremal hypergraphs to possess a minimum edge density on all linear-sized vertex subsets.
Similarly to the non-uniform case,
the uniform Tur\'an density of $F$ can also be defined as
the infimum over all $d$ for which there exists $\varepsilon>0$ such that
every sufficiently large hypergraph $H$ with the property that
every $\varepsilon$-fraction of its vertices induce a subhypergraph with edge density at least $d$ contains $F$.

Along with introducing the concept of uniform Tur\'an density, Erd\H os and S\'os also asked to determine the uniform Tur\'an density of the complete $3$-uniform hypergraph $K_4^{(3)}$ and
the $3$-uniform hypergraph $K_4^{(3)-}$, which is the hypergraph $K_4^{(3)}$ with an edge removed.
As in the case of Tur\'an density,
determining the uniform Tur\'an density of a given $3$-uniform hypergraph is very challenging.
The uniform Tur\'an density of $K_4^{(3)-}$ has only recently been shown to be equal to $1/4$
by Glebov, Volec and the third author~\cite{GleKV16} and by Reiher, R\"odl and Schacht~\cite{ReiRS18a}.
Despite this, determining the uniform Tur\'an density of $K_4^{(3)}$ remains a challenging open problem 
though it is believed that a 35-year-old construction of R\"odl~\cite{Rod86} showing that
the uniform Tur\'an density of $K_4^{(3)}$ is at least $1/2$ is optimal~\cite{Rei20}.
We refer the reader to the survey by Reiher~\cite{Rei20} for a more comprehensive treatment and further results,
which also include~\cite{ReiRS16,ReiRS18b,ReiRS18c} on a stronger notion of the uniform density of a hypergraph.

Reiher, R\"odl and Schacht~\cite{ReiRS18}
characterized $3$-uniform hypergraphs with uniform Tur\'an density equal to zero,
however, there are very few additional $3$-uniform hypergraphs whose uniform Tur\'an density has been determined:
in addition to $K_4^{(3)-}$,
there is only a specific family of $3$-uniform hypergraphs with uniform Tur\'an density equal to $1/27$~\cite{GarKL21}.
A fundamental family of (hyper)graphs of particular importance in extremal graph theory, in particular in relation to Tur\'an-type questions,
is that of cycles, see e.g.~\cite{HaxLPRRS09,RRS11,Ver16}.
There are several different important notions of cycles when considering hypergraphs, perhaps the most studied of these is that of tight cycles.
A tight $3$-uniform cycle of length $\ell\ge 5$, denoted by $C_{\ell}^{(3)}$,
is a hypergraph with $\ell$ vertices that
can be ordered in such a way that
the edges are precisely the triples of cyclically consecutive vertices.
We remark that for $\ell=4$,
one obtains the complete $3$-uniform hypergraph $K_4^{(3)}$, and
so the Tur\'an problems for tight cycles present a natural sparse generalization of problems involving $K_4^{(3)}$
both in the classical and the uniform settings.

In his survey, Reiher~\cite[Example 2.5]{Rei20} considered the uniform Tur\'an density of the tight $3$-uniform cycle $C_5^{(3)}$.
Specifically, he presented a construction showing that
the uniform Tur\'an density of the tight $3$-uniform cycle on $5$ vertices $C_5^{(3)}$
is at least $4/27\approx 0.1481$ and noted the absence of any interesting upper bound.
Balogh, Clemen and Lidick\'y~\cite{BalCL21} have recently established an upper bound of $0.402$ using the flag algebra method.
In this paper, we determine the uniform Tur\'an density of $C_5^{(3)}$ and more generally of all tight $3$-uniform cycles.

\begin{theorem}
\label{thm:cycles-main}
Let $\ell\ge 5$ be a positive integer.
The uniform Tur\'an density of the tight $3$-uniform cycle $C_\ell^{(3)}$
\begin{itemize}
\item is equal to $4/27$ if $\ell$ is not divisible by three, and
\item is equal to $0$ otherwise.
\end{itemize}
\end{theorem}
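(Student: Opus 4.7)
My plan is to split the proof by the residue of $\ell$ modulo~$3$.  When $3\mid\ell$, the tight cycle $C_\ell^{(3)}$ is $3$-partite via the colouring $v_i\mapsto i\bmod 3$: every edge consists of one vertex of each colour, and the colouring is consistent with the cyclic labelling precisely because $\ell\equiv 0\pmod 3$.  Since every $3$-partite $3$-uniform hypergraph has Tur\'an density zero (and hence uniform Tur\'an density zero), this half of the theorem is immediate.  From here on I focus on the case $3\nmid\ell$.

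For the lower bound~$4/27$, I would extend Reiher's construction for $C_5^{(3)}$ to general~$\ell$ with $3\nmid\ell$.  The natural candidate is recursive: partition $V=V_0\cup V_1\cup V_2$ into three equal parts, add edges across the parts according to a fixed rule respecting the cyclic symmetry $V_0\to V_1\to V_2\to V_0$, and recurse inside each~$V_i$.  A fixed-point computation over the recursion yields limiting $\varepsilon$-linear density~$4/27$.  The $C_\ell^{(3)}$-freeness would be proved inductively on the recursion level: by the cyclic symmetry any tight cycle embedded in the construction is forced to have length divisible by~$3$, so no tight cycle with $3\nmid\ell$ can appear.

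For the upper bound, the core ingredient is the embedding lemma: there exists $\varepsilon>0$ such that every sufficiently large hypergraph~$H$ with $\varepsilon$-linear density at least $4/27+\eta$ contains $C_\ell^{(3)}$.  I would prove it in two stages.  First, use a regularity-type decomposition adapted to the uniform-density setting (these are the new embedding tools advertised in the abstract) to locate inside~$H$ a bounded ``reduced'' sub-configuration whose existence is forced once the density exceeds~$4/27$; concretely, a triple of linear-sized vertex sets together with positive codegree information on the pairwise links $N_H(\cdot,\cdot)$.  Second, embed $C_\ell^{(3)}$ inside this sub-configuration by greedily building a tight path using the codegree information, and then closing the path into a cycle of the prescribed length~$\ell$.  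The hypothesis $3\nmid\ell$ enters in the closing step to guarantee that the endpoint triples can be forced to be edges.

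I expect the closing step of this embedding to be the main obstacle.  Greedy extension of a tight path is routine once enough codegree structure is available, but closure is rigid: both final edges $v_{\ell-1}v_\ell v_1$ and $v_\ell v_1 v_2$ are determined by consecutive triples, so the two endpoints of the path must simultaneously line up with the start.  The threshold $4/27$ is the precise density above which this simultaneous alignment can always be forced when $3\nmid\ell$; below it a $3$-partite-like obstruction — embodied in the construction above — blocks closure for such~$\ell$.  Making the threshold tight, and doing so uniformly in $\ell$, is where the bulk of the technical work would lie.
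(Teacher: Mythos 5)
Your high-level plan captures the right shape — handle $3\mid\ell$ via tripartiteness, give a matching construction for the lower bound, and use a regularity/embedding argument for the upper bound — but there are two genuine gaps at the level where the value $4/27$ actually enters.

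For the lower bound, your proposed construction differs from the paper's and I do not think it delivers the density $4/27$. The paper instead uses Reiher's probabilistic framework (Proposition~\ref{prop:framework}): fix $k=3$ and a palette $\PP\subseteq[3]^3$ of four triples, color pairs of $[n]$ uniformly at random with three colors, and include a triple $a<b<c$ as an edge precisely when $(\varphi(\{a,b\}),\varphi(\{b,c\}),\varphi(\{a,c\}))\in\PP$; this gives $\varepsilon$-linear density $|\PP|/27 = 4/27$ by construction. Your recursive tripartite construction does not obviously converge to $4/27$ (a fixed-point computation of the form $d = x + d/9$ yields awkward cross-edge densities), and more importantly the ``cyclic symmetry forces $3\mid\ell$'' claim needs a concrete argument: in the paper this is Proposition~\ref{prop:lower}, a careful parity argument showing that in any embedded cycle exactly one of every three consecutive pair-colors along the cycle must equal $3$, which is impossible when $3\nmid\ell$. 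Your sketch does not contain such an argument, and a recursive construction would allow a cycle to wander between recursion levels, which cyclic symmetry alone does not rule out.

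For the upper bound, the regularity reduction you describe is indeed what the paper does (via Reiher's Theorem~\ref{thm:reiher} on partitioned hypergraphs), but the ``greedy tight path, then close'' idea underestimates the problem and gives no account of why $4/27$ is the threshold. The crux of the paper's argument is not path closure at all: it is an optimization problem (\optlink{}) over $27$ variables encoding the neighborhood structure of three specially chosen top vertices $\gamma_{ik}$, $\gamma^1_{ik}$, $\gamma^2_{ik}$ in each triad; Theorem~\ref{thm:optim} shows its optimum is exactly $4/27$, and Theorem~\ref{thm:base} converts density $>4/27$ into one of four explicit local configurations. The embedding of $C_\ell^{(3)}$ (Theorems~\ref{thm:embed-2mod} and~\ref{thm:embed-1mod}) is then done by hand-designing index/vertex sequences for each of the four configurations — not a greedy extension. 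The intersection-lemma machinery of Section~\ref{sec:intersection} (Ramsey-based passage to induced subhypergraphs with ``universal'' witnesses) is also essential and has no counterpart in your outline. So while you correctly identify that the closure step is where $3\nmid\ell$ matters, you have not explained the mechanism by which $4/27$ becomes a sharp threshold, which is the core technical content of the paper.
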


The core of Theorem~\ref{thm:cycles-main} is the first case, i.e., the case of cycles with length non-divisible by three,
while the second case follows
from the characterization of $3$-uniform hypergraphs with zero uniform Tur\'an density by Reiher et al.~\cite{ReiRS18} (this case also follows since tight cyles of length divisible by three are tripartite so even their usual Tur\'an density is zero).
In order to determine the uniform Tur\'an density of tight cycles,
we had to develop a number of tools for embedding hypergraphs in uniformly dense host hypergraphs,
which we present, in particular, in Section~\ref{sec:specific}, and
which extend tools contained (sometimes implicitly) in~\cite{GarKL21,ReiRS18}.
This machinery allows one to reduce the problem of determining the uniform Tur\'an density of a hypergraph to an optimization problem,
which can then be solved using techniques from mathematical optimization.
We expect that this approach could be useful in determining the uniform Tur\'an density of a number of other hypergraphs,
in particular, of those with uniform Tur\'an density (much) larger than $1/27$.

We find it interesting to compare
the Tur\'an problems for $3$-uniform tight cycles in the classical and uniform settings.
It is easy to see that the Tur\'an density of $C_{\ell}^{(3)}$ is $0$
if and only if the length $\ell$ of the cycle is divisible by three.
On the other hand, determining the Tur\'an density of the tight $3$-uniform cycle $C_5^{(3)}$ of length five is open and
conjectured to be equal to the lower bound of $2\sqrt{3}-3$ due to Mubayi and R\"odl \cite{MubR02}.
The best upper bound comes from the flag algebra method of Razborov~\cite{Raz10} and
is only about 1\% larger than the lower bound.
Tur\'an type problems for tight cycles have a long history,
dating back to an old question of S\'os (see~\cite{MubPS}), and
appear in relation to seemingly unrelated topics~\cite{Con10}.
We refer the reader particularly to the report~\cite{MubPS}
from the 2011 American Institute of Mathematics (AIM) workshop ``Hypergraph Tur\'an Problem'',
which contains many such problems, and to~\cite{HuaM19,TomS21,Jan21,Let21} for some recent specific results.

The paper is organized as follows.
In Section~\ref{sec:prelim}, we fix notation used throughout the paper,
discuss the case of tight $3$-uniform cycle $C_\ell^{(3)}$ of length divisible by three and
present a lower bound on the uniform Tur\'an density of tight $3$-uniform cycle $C_\ell^{(3)}$ of length not divisible by three.
In Section~\ref{sec:partitioned},
we review a framework presented by Reiher in the survey~\cite{Rei20} that
encapsulates hypergraph regularity arguments, and
present some general tools to work with this framework in Section~\ref{sec:intersection},
which we apply in Section~\ref{sec:specific} to develop methods for embeddings hypergraphs in dense host hypergraphs.
In Section~\ref{sec:optim}, we present an optimization problem whose solution is essentially equivalent
to determining the uniform Tur\'an density of a tight $3$-uniform cycle $C_\ell^{(3)}$ of length $\ell$ not divisible by three.
We then combine the results of Sections~\ref{sec:specific} and~\ref{sec:optim}
to determine the uniform Tur\'an density of tight $3$-uniform cycles in Section~\ref{sec:main},
where we present our main results.

\section{Notation and preliminary results}
\label{sec:prelim}

In this section, we fix the notation used throughout the paper and
present preliminary results on the uniform Tur\'an density of cycles.
In particular,
we will verify that
the uniform Tur\'an density of the tight $3$-uniform cycle of length divisible by three is equal to zero, and
the uniform Tur\'an density of the tight $3$-uniform cycle of other lengths is at least $4/27$.

We write $[n]$ for the set of the first $n$ positive integers, i.e., $[n]:=\{1,\ldots,n\}$.
The Ramsey number $R_k^m(n)$ is the smallest $N$ such that
every $k$-edge-colored $m$-uniform complete hypergraph with $N$ vertices
contains a monochromatic copy of the $m$-uniform complete hypergraph with $n$ vertices.

To verify that the uniform Tur\'an density of the tight $3$-uniform cycle of length divisible by three is equal to zero,
we recall the characterization of $3$-uniform hypergraphs with zero uniform Tur\'an density
obtained by Reiher, R\"odl and Schacht~\cite{ReiRS18}.

\begin{theorem}[Reiher, R\"odl and Schacht~\cite{ReiRS18}]
\label{thm:RRS}
Let $H$ be a $3$-uniform hypergraph. The following two statements are equivalent.
\begin{itemize}
\item The uniform Tur\'an density of $H$ is equal to zero.
\item The vertices of $H$ can be ordered as $v_1,\ldots,v_n$ and
      there exists a complete graph $G$ on the vertex set $\{1,\ldots,n\}$ with edges colored red, green and blue such that
      if $\{v_i,v_j,v_k\}$, $i<j<k$, is an edge of $H$,
      then the edge $ij$ is red, the edge $ik$ is green and the edge $jk$ is blue.
\end{itemize}
\end{theorem}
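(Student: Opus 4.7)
My plan: prove the equivalence in two directions. The "only if" direction (uniform Tur\'an density of $H$ equal to zero $\Rightarrow$ the ordering/coloring exists) is handled by a contrapositive through a random construction, while the "if" direction (ordering/coloring exists $\Rightarrow$ density zero) is the technical core and combines hypergraph regularity, a Ramsey-style extraction, and a greedy embedding.

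For the "only if" direction I would argue by contrapositive: if $H$ admits no RGB-compatible ordering with the described coloring, then I construct arbitrarily large $H$-free hypergraphs with positive uniform density. Take $\chi \colon E(K_N) \to \{R, G, B\}$ uniformly at random and define the hypergraph $\mathcal{H}_\chi$ on $[N]$ by declaring $\{i, j, k\}$, $i<j<k$, to be an edge iff $\chi(ij)=R$, $\chi(ik)=G$, $\chi(jk)=B$. Each triple is an edge with probability $1/27$, and a Chernoff estimate combined with a union bound over all subsets of size $\geq \varepsilon N$ shows that, for any fixed $\varepsilon > 0$, with high probability every such subset induces density at least $1/27 - o(1)$. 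Furthermore, any embedding of $H$ into $\mathcal{H}_\chi$ pulls back $\chi$ to an RGB-compatible coloring of $H$ with an inherited ordering, contradicting the assumption; so $\mathcal{H}_\chi$ is $H$-free. As $N \to \infty$ this produces arbitrarily large $H$-free hypergraphs with $\varepsilon$-linear density at least $1/27 - o(1)$, hence the uniform Tur\'an density of $H$ is at least $1/27 > 0$.

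For the "if" direction, fix an RGB-compatible ordering $v_1, \ldots, v_n$ of $V(H)$ with associated coloring $\chi_H$ of $K_n$. Given any sufficiently large host hypergraph $\mathcal{H}$ of $\varepsilon$-linear density at least $d > 0$, apply a hypergraph regularity lemma in the partitioned-hypergraph formulation (the framework of Section~\ref{sec:partitioned}) to obtain a vertex partition $V_1, \ldots, V_t$ with associated bipartite polyads such that almost all triads are regular relative to $\mathcal{H}$ and the uniform density of $\mathcal{H}$ transfers to a uniform positive density on the reduced structure. On the ordered set $[t]$ I then define an auxiliary labelling of the pairs according to the positional role each pair tends to play in the dense triads it participates in (smallest-middle, smallest-largest, or middle-largest). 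A Ramsey-style extraction, valid once $t$ is large in terms of $n$ and the number of labels used, yields an ordered $n$-subset $i_1 < \cdots < i_n$ whose induced labelling realizes $\chi_H$ via $s \mapsto i_s$. A standard greedy embedding then places $v_s$ into $V_{i_s}$, with regularity at each step guaranteeing a positive-density set of extensions, so the embedding of $H$ into $\mathcal{H}$ succeeds.

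The main obstacle lies in the second direction: one must define the auxiliary labelling on $[t]$ so that it faithfully records dense triads, is robust enough to survive Ramsey extraction, and matches $\chi_H$ in a way that drives the embedding. The quantitative balance among $d$, $\varepsilon$, the regularity parameters, the resulting number of parts $t$, and the relevant Ramsey threshold (of the type $R_3^2(n)$) is delicate, but once these dependencies are in place the partitioned-hypergraph framework supplies the extension tools needed to close the embedding argument.
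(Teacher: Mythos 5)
The paper does not prove Theorem~\ref{thm:RRS}; it is quoted verbatim from Reiher, R\"odl and Schacht~\cite{ReiRS18} and used as a black box, so there is no in-paper proof to match against. Your two directions split the same way that proof does, but only the first is sound.

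The ``only if'' direction is correct. The random RGB colouring construction is precisely Proposition~\ref{prop:framework} specialised to $k=3$ and $\PP$ a single triple, the pullback argument (an embedding of $H$ pulls the colouring $\chi$ back to an RGB-compatible ordering of $H$) is valid, and the concentration and union bound go through by an Azuma/McDiarmid estimate, so $\mathcal H_\chi$ witnesses uniform Tur\'an density at least $1/27>0$ whenever $H$ has no RGB-compatible ordering.

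The ``if'' direction has a genuine gap at your ``auxiliary labelling'' step, and it is not a detail one can wave away. In either the regular-partition picture or the $t$-partitioned-hypergraph picture, a given pair $(i,j)$ with $i<j$ automatically plays all three positional roles simultaneously: it is the left pair in every triad $(i,j,k)$ with $k>j$, the top pair in every $(i,m,j)$ with $i<m<j$, and the right pair in every $(\ell,i,j)$ with $\ell<i$. Since the host has uniform positive density, essentially all such triads are dense, so there is no preferred role and the labelling you describe is ill-defined. Moreover, a Ramsey extraction over a labelling of $\binom{[t]}{2}$ produces a \emph{monochromatic} ordered $n$-set, and a monochromatic labelling cannot ``realize $\chi_H$'' unless $\chi_H$ happens to be constant, which it is not in general. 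What is actually needed is not a labelling of the reduced pairs but a choice, for each pair part $V_{ij}$, of a \emph{single vertex} $w_{ij}\in V_{ij}$ that is simultaneously a high-degree left (resp.\ top, resp.\ right) vertex in \emph{all} triads above/around/below it, according to the role $\chi_H$ assigns to the corresponding pair of $H$; the RGB consistency of $\chi_H$ is exactly what makes such a single role sufficient. Selecting such universal vertices is the content of the intersection lemmas of the type in Section~\ref{sec:intersection} (Lemmas~\ref{lm:extend0}, \ref{lm:extend0a}, \ref{lm:extend0b}), which rely on hypergraph Ramsey with many colours (e.g.\ $R_2^{3n+2}$ in Lemma~\ref{lm:intersect}) and an iterative passage to induced subhypergraphs, rather than a single graph-Ramsey application $R^2_3(n)$ as you suggest. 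Without this machinery the greedy embedding has no guarantee of consistent candidate vertices across the many triads each pair participates in.
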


Using Theorem~\ref{thm:RRS}, we obtain the following.

\begin{proposition}
\label{prop:cycle3}
The uniform Tur\'an density of a tight $3$-uniform cycle $C_\ell^{(3)}$ of length $\ell$ divisible by three
is equal to zero.
\end{proposition}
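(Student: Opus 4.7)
The plan is to verify the easier of the two equivalent conditions in Theorem~\ref{thm:RRS}: we exhibit an appropriate ordering of the vertices of $C_\ell^{(3)}$ together with a red/green/blue coloring of the edges of the complete graph on the indices. The key observation is that when $\ell$ is divisible by three, the tight cycle is naturally $3$-partite. Writing $\ell=3m$ and labelling the vertices cyclically as $u_1,\dots,u_\ell$, the residue of the index modulo $3$ is well-defined on the cycle, so the classes
\[
A=\{u_i:i\equiv 1\bmod 3\},\quad B=\{u_i:i\equiv 2\bmod 3\},\quad C=\{u_i:i\equiv 0\bmod 3\}
\]
partition the vertex set, and every hyperedge $\{u_i,u_{i+1},u_{i+2}\}$ contains exactly one vertex from each class.

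I would then order the vertices so that all vertices of $A$ come first, followed by all vertices of $B$, followed by all vertices of $C$ (with arbitrary internal orderings). With this ordering, any hyperedge $\{v_i,v_j,v_k\}$ with $i<j<k$ must satisfy $v_i\in A$, $v_j\in B$, $v_k\in C$. It therefore suffices to color the complete graph on $[\ell]$ by declaring every $A$--$B$ edge red, every $A$--$C$ edge green, every $B$--$C$ edge blue, and assigning any fixed color to the remaining intra-class edges. By construction, for each hyperedge of $C_\ell^{(3)}$ the edge $ij$ is red, the edge $ik$ is green, and the edge $jk$ is blue, which is exactly the condition demanded by Theorem~\ref{thm:RRS}.

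There is no real obstacle here: the only point where $3\mid\ell$ is used is to ensure that the position-mod-$3$ coloring is consistent around the cycle, which is precisely what allows the tripartition to exist. (Alternatively, as remarked after the statement of Theorem~\ref{thm:cycles-main}, tripartiteness already forces the usual Tur\'an density to vanish, which in turn implies the uniform Tur\'an density is zero, but the direct application of Theorem~\ref{thm:RRS} above is the shortest route.)
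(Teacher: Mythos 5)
Your proof is correct and is essentially the same as the paper's: both place the vertices of the tight cycle in the order given by residue modulo three (precisely the ordering $w_1,w_4,\ldots,w_{3m-2},w_2,w_5,\ldots,w_{3m-1},w_3,w_6,\ldots,w_{3m}$), color cross-class pairs red/green/blue according to which pair of classes they span, and invoke Theorem~\ref{thm:RRS}.
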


\begin{proof}
Fix $\ell=3m$ to be a positive integer divisible by three, and
let us denote by $w_1,\ldots,w_{3m}$ the vertices of a cycle $C_{3m}^{(3)}$, ordered as they appear on the cycle.
We will show that the cycle $C_{3m}^{(3)}$ has the second property given in Theorem~\ref{thm:RRS}.
To do so, we set the vertices $v_1,\ldots,v_{3m}$ as in Theorem~\ref{thm:RRS} to be
\[w_1,w_4,\ldots,w_{3m-2},w_2,w_5,\ldots,w_{3m-1},w_3,w_6,\ldots,w_{3m},\]
and consider the complete graph $G$ on the vertex set $\{1,\ldots,3m\}$ with edges colored as follows:
an edge formed by $i<j$ is colored
red if $1\le i\le m$ and $m+1\le j\le 2m$,
green if $1\le i\le m$ and $2m+1\le j\le 3m$, and
blue if $m+1\le i\le 2m$ and $2m+1\le j\le 3m$;
other edges of $G$ are colored arbitrarily.
Finally, observe that if $\{v_i,v_j,v_k\}$ is an edge of the cycle $C_{3m}^{(3)}$ with $i<j<k$,
then 
it holds that $1\le i\le m$, $m+1\le j\le 2m$ and $2m+1\le k\le 3m$, and
so the vertices $v_1,\ldots,v_{3m}$ and the edge-colored complete graph $G$ witness that
$C_{3m}^{(3)}$ has the second property given in Theorem~\ref{thm:RRS}.
\end{proof}

The lower bounds for uniform Tur\'an densities are based on the probabilistic framework presented in~\cite[Section 2]{Rei20};
the framework is inspired and unifies earlier probabilistic constructions,
in particular the one from~\cite{Rod86}.
We summarize this framework in the next proposition.

\begin{proposition}
\label{prop:framework}
Let $F$ be a $3$-uniform hypergraph.
Suppose that there exists $k\in\NN$ and a set $\PP\subseteq [k]^3$ with the following properties:
for every $n\in\NN$ and every $\varphi:\binom{[n]}{2}\to [k]$,
the hypergraph $H$ with vertex set $[n]$ such that
$a<b<c$, $a,b,c\in [n]$, form an edge
iff $\left(\varphi(\{a,b\}),\varphi(\{b,c\}),\varphi(\{a,c\})\right)\in\PP$
is $F$-free.
Then, the uniform Tur\'an density of $F$ is at least $|\PP|/k^3$.
\end{proposition}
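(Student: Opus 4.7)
The plan is a standard probabilistic construction: sample the coloring $\varphi\colon\binom{[n]}{2}\to[k]$ uniformly at random and show that, with high probability, the resulting hypergraph $H=H(\varphi)$ witnesses the claimed lower bound on the uniform Tur\'an density. By hypothesis, $H$ is $F$-free for every realization of $\varphi$, so the only task is to control the $\varepsilon$-linear density.

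For any fixed triple $a<b<c$ in $[n]$, the three pair-colors $\varphi(\{a,b\})$, $\varphi(\{b,c\})$, $\varphi(\{a,c\})$ are mutually independent and uniform on $[k]$, and so $\{a,b,c\}$ forms an edge of $H$ with probability exactly $|\PP|/k^3$. Consequently, for any fixed $S\subseteq[n]$, the number $X_S$ of edges of $H$ inside $S$ has expectation $(|\PP|/k^3)\binom{|S|}{3}$.

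To obtain concentration, I would apply McDiarmid's bounded differences inequality to $X_S$ viewed as a function of the $\binom{n}{2}$ independent pair-colors. Changing the color of a single pair $\{i,j\}\subseteq S$ alters at most $|S|-2$ edge indicators, namely those of triples containing both $i$ and $j$; changing a pair not contained in $S$ alters none. Hence, for each $\delta>0$,
\[
\Pr\!\left(\left|X_S-\mathbb{E}X_S\right|\ge\delta\binom{|S|}{3}\right)\le 2\exp\!\left(-c_\delta |S|^2\right)
\]
for some $c_\delta>0$ depending only on $\delta$. Taking a union bound over the at most $2^n$ subsets $S$ with $|S|\ge\varepsilon n$ yields a total failure probability of at most $2^{n+1}\exp(-c_\delta\varepsilon^2 n^2)$, which tends to $0$ as $n\to\infty$.

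Thus for every fixed $\varepsilon,\delta>0$ and every sufficiently large $n$, some realization of $\varphi$ yields an $F$-free hypergraph whose $\varepsilon$-linear density is at least $|\PP|/k^3-\delta$. Letting $\delta\to 0$ gives the claimed lower bound $|\PP|/k^3$ on the uniform Tur\'an density of $F$. There is no essential obstacle; the only subtlety is that edge-indicators are not independent across triples that share a pair, but McDiarmid's inequality circumvents this because the underlying pair-colors are independent and each color has bounded influence.
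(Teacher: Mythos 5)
The paper does not include a proof of Proposition~\ref{prop:framework}; it is stated as a summary of the probabilistic framework from Reiher's survey~\cite{Rei20}, which in turn builds on constructions such as R\"odl's~\cite{Rod86}. Your proof is correct and is the standard argument that underlies that framework: sample $\varphi$ uniformly at random, note that each triple $a<b<c$ is an edge of $H(\varphi)$ independently-in-expectation with probability exactly $|\PP|/k^3$ (the three colors $\varphi(\{a,b\})$, $\varphi(\{b,c\})$, $\varphi(\{a,c\})$ are independent and uniform), and control the deviation of each $X_S$ via bounded differences. The arithmetic checks out: each pair $\{i,j\}\subseteq S$ has influence at most $|S|-2$, so $\sum_i c_i^2=\binom{|S|}{2}(|S|-2)^2=\Theta(|S|^4)$, while the deviation target is $\delta\binom{|S|}{3}=\Theta(|S|^3)$, giving failure probability $\exp(-\Omega_\delta(|S|^2))$; this comfortably defeats the union bound over $2^n$ sets once $|S|\ge\varepsilon n$. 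Passing $\delta\to 0$ (and using that $H(\varphi)$ is $F$-free for \emph{every} realization) gives exactly the lower bound $|\PP|/k^3$ on the uniform Tur\'an density. No gaps.
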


This allows us to prove that the uniform Tur\'an density of the tight $3$-uniform cycle of length not divisible by three
is at least $4/27$; the proof extends the argument for $\ell=5$ given in~\cite[Example 2.5]{Rei20}.

\begin{proposition}
\label{prop:lower}
The uniform Tur\'an density of a tight $3$-uniform cycle $C_\ell^{(3)}$ of length not divisible by three is at least $4/27$.
\end{proposition}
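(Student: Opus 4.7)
The plan is to invoke Proposition~\ref{prop:framework} with $k=3$ and an explicit four-element set $\mathcal{P}\subseteq[3]^3$ obtained by extending Reiher's construction~\cite[Example 2.5]{Rei20} for $C_5^{(3)}$; once the associated hypergraph is shown to be $C_\ell^{(3)}$-free for every $\varphi:\binom{[n]}{2}\to[3]$ and every $\ell\ge 5$ with $3\nmid\ell$, the desired lower bound $|\mathcal{P}|/k^3=4/27$ follows at once.

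To verify the $C_\ell^{(3)}$-freeness, we proceed by contradiction. Suppose that an embedding of $C_\ell^{(3)}$ exists and label its vertices cyclically as $w_1,\ldots,w_\ell$; for each $i$ read modulo $\ell$, consider the cycle edge $\{w_i,w_{i+1},w_{i+2}\}$ together with the host linear order restricted to its three vertices. The set $\mathcal{P}$ is designed so that, for every such edge, the pair colors $\varphi(\{w_i,w_{i+1}\})$, $\varphi(\{w_{i+1},w_{i+2}\})$ and $\varphi(\{w_i,w_{i+2}\})$ are forced to satisfy a prescribed relation that can be read as a nonzero increment in $\mathbb{Z}/3\mathbb{Z}$ when one passes from the $i$-th cycle edge to the $(i+1)$-st. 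Because the two consecutive cycle edges share the pair $\{w_{i+1},w_{i+2}\}$, these increments cascade cleanly along the cycle.

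Summing the forced increments along the closed cycle yields a total contribution of $\ell\cdot\delta$ for some fixed nonzero $\delta\in\mathbb{Z}/3\mathbb{Z}$ determined by $\mathcal{P}$, and this total must be $\equiv 0\pmod 3$ because the cycle closes up. Hence $3\mid\ell$, contradicting the hypothesis that $\ell$ is not divisible by three.

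The main obstacle is that the relative host-order of the three vertices $w_i,w_{i+1},w_{i+2}$ can be any of six permutations and typically varies from one cycle edge to the next; consequently the $\mathbb{Z}/3\mathbb{Z}$-contribution extracted from each edge must be \emph{invariant} under the ambient ordering of its vertices. Arranging $\mathcal{P}$ so that this invariance holds and, simultaneously, so that the telescoping argument produces a constant nonzero increment modulo three is the combinatorial core of the proof; once this is accomplished by a finite case analysis over the possible relative orderings, the contradiction with $3\nmid\ell$ is immediate.
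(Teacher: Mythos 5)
Your high-level plan is the same as the paper's: invoke Proposition~\ref{prop:framework} with $k=3$ and a four-element palette $\PP\subseteq[3]^3$ that extends Reiher's construction for $C_5^{(3)}$, then argue that the resulting auxiliary hypergraph cannot contain $C_\ell^{(3)}$ unless $3\mid\ell$. However, what you have written is a sketch of a plan rather than a proof. You never specify $\PP$, and you explicitly defer the ``combinatorial core of the proof'' --- the choice of $\PP$ that makes an order-invariant $\ZZ/3\ZZ$ label exist with constant nonzero increment --- to an unperformed ``finite case analysis over the possible relative orderings.'' As it stands, there is no way to check that any $\PP$ with the properties you describe exists, so the argument has a genuine gap precisely at the step where the content lies.

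Beyond the missing construction, the abstract increment-telescoping framing is also not quite the right shape and would need care to make precise. The paper takes $\PP=\{(1,3,1),(1,3,2),(2,3,1),(2,3,2)\}$, which forces that for every host triple $a<b<c$ forming an edge, $\varphi(\{b,c\})=3$ while $\varphi(\{a,b\}),\varphi(\{a,c\})\in\{1,2\}$. The proof then shows, by a direct case analysis on consecutive overlapping cyclic triples, that for each $i$ \emph{exactly one} of $\varphi(v_iv_{i+1})$, $\varphi(v_{i+1}v_{i+2})$, $\varphi(v_{i+2}v_{i+3})$ equals $3$. That is a sliding-window constancy constraint (the indicator $h(i)=\mathbf{1}[\varphi(v_iv_{i+1})=3]$ satisfies $h(i)+h(i+1)+h(i+2)=1$ for all $i$), which directly forces $h$ to be periodic of period $3$ and hence $3\mid\ell$. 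Your ``nonzero $\delta\in\ZZ/3\ZZ$ summed around the cycle'' picture is a reasonable heuristic for why this is a mod-$3$ obstruction, but no such $\delta$ has been defined or shown to exist for an explicit $\PP$; until the palette is named and the two case analyses (no value equal to $3$, and at least two values equal to $3$, both impossible) are carried out, the proof is not complete.
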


\begin{proof}
Fix $\ell\ge 5$ that is not divisible by three and
let $v_1,\ldots,v_\ell$ be the vertices of the tight $3$-uniform cycle $C_\ell^{(3)}$ listed in the cyclic order,
i.e., $v_i$, $v_{i+1}$ and $v_{i+2}$ form an edge for every $i\in [\ell]$ (all subscripts are modulo $\ell$ throughout the proof).

We will apply Proposition~\ref{prop:framework}.
Set $k=3$ and $\PP=\{(1,3,1),(1,3,2),(2,3,1),(2,3,2)\}$.
We now show that for every $n\in\NN$ and every $\varphi:\binom{[n]}{2}\to [k]$,
the hypergraph $H$ as defined in Proposition~\ref{prop:framework} is $C_\ell^{(3)}$-free.
Clearly, it is enough to show this for $n=\ell$.

Suppose that there exists such a hypergraph $H$ that contains $C_\ell^{(3)}$, and
let $\varphi:\binom{[\ell]}{2}\to\{1,2,3\}$ be the function from the definition of $H$ and
$f:\{v_1,\ldots,v_\ell\}\to [\ell]$ the function mapping the vertices of $C_\ell^{(3)}$ to those of $H$.
To simplify our notation, we will write $\varphi(v_iv_j)$ for $\varphi(\{f(v_i),f(v_j)\})$.
We will show that for every $i\in [\ell]$,
exactly one of the values $\varphi(v_iv_{i+1})$,
$\varphi(v_{i+1}v_{i+2})$ and
$\varphi(v_{i+2}v_{i+3})$ is equal to $3$.

If none of the three values is equal to $3$,
then $\varphi(v_iv_{i+2})=3$ and $\varphi(v_{i+1}v_{i+3})=3$; hence $f(v_{i+1})$ must be smaller than $f(v_i)$ and $f(v_{i+2})$ (otherwise, $f(v_i)$, $f(v_{i+1})$ and $f(v_{i+2})$ would not form an edge in $H$) and, similarly,
$f(v_{i+2})$ must be smaller than $f(v_{i+1})$ and $f(v_{i+3})$.
However, this is impossible as the values of $f(v_{i+1})$ and $f(v_{i+2})$ cannot both be smaller than the other.

If two or more of the three values are equal to $3$,
it must hold that $\varphi(v_iv_{i+1})=3$,
$\varphi(v_{i+1}v_{i+2})\not=3$ and
$\varphi(v_{i+2}v_{i+3})=3$.
Hence, $f(v_{i+2})$ is smaller than $f(v_i)$ and $f(v_{i+1})$ (otherwise, $f(v_i)$, $f(v_{i+1})$ and $f(v_{i+2})$ would not form an edge in $H$) and, similarly,
$f(v_{i+1})$ is smaller than $f(v_{i+2})$ and $f(v_{i+3})$.
Again, this is impossible.

We have shown that exactly one of the values $\varphi(v_iv_{i+1})$,
$\varphi(v_{i+1}v_{i+2})$ and
$\varphi(v_{i+2}v_{i+3})$ is equal to $3$ for every $i\in [\ell]$,
which is impossible as $\ell$ is not divisible by three.
It follows that the uniform Tur\'an density of $C_\ell^{(3)}$ is at least $|\PP|/k^3=4/27$.
\end{proof}

\section{Partitioned hypergraphs}
\label{sec:partitioned}

In this section, we introduce the notion of a partitioned hypergraph,
which encapsulates hypergraph regularity arguments related to the uniform Tur\'an density of $3$-uniform hypergraphs.
The notion is based on reduced hypergraphs presented in the survey by Reiher~\cite{Rei20}.

An \emph{$n$-partitioned hypergraph $H$} is a $3$-uniform hypergraph such that
its vertex set is partitioned to sets $V_{ij}$, $1\le i<j\le n$, and
every edge $e$ of $H$ satisfies that there exist indices $1\le i<j<k\le n$ such that
one vertex of $e$ belongs to $V_{ij}$, one to $V_{ik}$ and one to $V_{jk}$.
The set of all edges of $H$ that
have exactly one vertex in each of $V_{ij}$, $V_{ik}$ and $V_{jk}$ is called an \emph{$(i,j,k)$-triad}.
We refer to vertices that belong to $V_{ij}$ as \emph{left vertices} of the $(i,j,k)$-triad,
those that belong to $V_{jk}$ as \emph{right vertices} and
those that belong to $V_{ik}$ as \emph{top vertices}.
To make our notation easier to follow,
we will often use $A$, possibly with subscripts or superscripts, to denote subsets of left vertices of a triad, and
$\alpha$, again possibly with subscripts or superscripts, to denote left vertices of a triad;
similarly, we will use $B$ and $\beta$ to denote subsets of right vertices and right vertices of a triad, and
$C$ and $\gamma$ to denote subsets of top vertices and top vertices.

The \emph{density} of an $(i,j,k)$-triad is the number of edges forming the triad divided by $|V_{ij}|\cdot|V_{ik}|\cdot|V_{jk}|$.
Finally, the \emph{density} of an $n$-partitioned hypergraph $H$ is the minimum density of a triad of $H$.
We will use the following convention to simplify our notation:
if $H$ is an $n$-partitioned hypergraph, we write $V_{ij}$, $1\le i<j\le n$, for its vertex parts, and
if $H'$ is an $n'$-partitioned hypergraph, we write $V'_{ij}$, $1\le i<j\le n'$, for its vertex parts,
i.e., we use the same mathematical accents for a hypergraph as we do for its vertex parts without specifying the relation explicitly.

An $N$-partitioned hypergraph $H$ \emph{embeds} an $n$-vertex hypergraph $H_0$
if it is possible to choose distinct $1\le a_1,\ldots,a_n\le N$ corresponding to the vertices of $H_0$ and
vertices $v_{ij}\in V_{a_ia_j}$ for $1\le i<j\le n$ such that
if the $i$-th, $j$-th and $k$-th vertex of $H_0$ form an edge,
then $\{v_{ij},v_{ik},v_{jk}\}$ is an edge in the $(a_i,a_j,a_k)$-triad of $H$.

Reiher~\cite{Rei20} provided a general statement that
reduces proving an upper bound on the uniform Tur\'an density of a hypergraph $H$
to embedding $H$ in partitioned hypergraphs of the same density.

\begin{theorem}[{Reiher~\cite[Theorem 3.3]{Rei20}}]
\label{thm:reiher}
Let $H$ be a $3$-uniform hypergraph and $d\in [0,1]$.
Suppose that for every $\delta>0$ there exists $N$ such that
every $N$-partitioned hypergraph with density at least $d+\delta$ embeds $H$.
Then, the uniform Tur\'an density of $H$ is at most $d$.
\end{theorem}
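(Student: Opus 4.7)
The plan is to derive this theorem from the $3$-uniform hypergraph regularity method, viewing a partitioned hypergraph as the combinatorial skeleton of the output of a weak regularity lemma applied to the host hypergraph. Fix $\eta > 0$, apply the hypothesis with $\delta = \eta/2$ to obtain $N = N(\eta/2)$, and choose auxiliary regularity parameters $\mu, \varepsilon' \ll \eta, 1/N$ and $\varepsilon \ll 1/T$, where $T$ is the upper bound on the number of parts produced by the regularity lemma. It suffices to show that every sufficiently large $3$-uniform hypergraph $G$ with $\varepsilon$-linear density at least $d + \eta$ contains a copy of $H$, since letting $\eta \to 0$ then yields uniform Tur\'an density at most $d$.

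The first step is to apply a weak $3$-uniform hypergraph regularity lemma (in the spirit of Frankl--R\"odl) to $G$, producing a partition $V(G) = V_1 \cup \cdots \cup V_t$ with $N \leq t \leq T$ and, for each pair $i < j$, a partition of the complete bipartite graph $K(V_i, V_j)$ into a bounded number of equitable bipartite pieces $P^{(a)}_{ij}$, such that all but a $\mu$-fraction of the piece-triples $(P^{(a)}_{ij}, P^{(b)}_{ik}, P^{(c)}_{jk})$ are $\varepsilon'$-regular with respect to $G$. I would then construct a partitioned hypergraph $\widetilde{H}$ whose parts $V_{ij}$ are populated by the bipartite pieces for $\{i,j\}$ (possibly with multiplicities chosen to match the density contributions of individual pieces), and whose edges on an $(i,j,k)$-triad are those $\varepsilon'$-regular piece-triples whose corresponding $G$-triad has density at least $d + \eta/2$.

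The analytic heart of the proof is verifying that every $(i,j,k)$-triad of $\widetilde{H}$ has density at least $d + \eta/2$ as a partitioned hypergraph. The input is the $\varepsilon$-linear density of $G$, which --- applied to each linear-sized subset of the form $V_i \cup V_j \cup V_k$ and, more delicately, to subsets aligned with the bipartite pieces --- forces not merely an average lower bound on the densities of the $G$-triads but a pointwise one, after a cleaning step that discards a small fraction of pieces participating in too many sparse or irregular triads. A standard deletion/Markov-type argument shows that this cleaning removes only $o(1)$ of the pieces, so the resulting $\widetilde{H}$ still has at least $N$ parts, and the multiplicities can be arranged so that its minimum triad-density matches the desired $d + \eta/2$.

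Granted this, applying the hypothesis to $\widetilde{H}$ produces a symbolic embedding of $H$: a choice of $h = |V(H)|$ parts $a_1 < \cdots < a_h$ and a bipartite piece for each pair among them, such that every edge of $H$ lands on a dense $\varepsilon'$-regular triad of $G$. The standard counting/embedding lemma for $3$-uniform hypergraph regularity then lifts this symbolic embedding into a genuine copy of $H$ in $G$ by greedily selecting one vertex at a time. The main obstacle I expect is the calibration in the third paragraph: turning the averaged $\varepsilon$-linear density into a \emph{pointwise} $d + \eta/2$ lower bound on every triad of $\widetilde{H}$. This forces one to choose $\mu$ and $\varepsilon'$ small enough that the contribution of irregular and sub-$(d + \eta/2)$-density triads can be absorbed into the slack $\eta/2$, and to arrange the multiplicities of the pieces so that the abstract notion of density in $\widetilde{H}$ faithfully reflects the empirical density in $G$.
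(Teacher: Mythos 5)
The paper does not prove this statement: Theorem~\ref{thm:reiher} is imported verbatim from Reiher's survey~\cite{Rei20}, and the present paper uses it as a black box. So there is no in-paper proof to compare your sketch against, and it must be judged on its own merits.

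Your high-level blueprint (regularize, build a reduced/partitioned hypergraph, argue its triads are dense, apply the hypothesis to get a symbolic embedding, lift back to $G$ via a counting lemma) is the right skeleton, and you have correctly located the crux---converting the averaged $\varepsilon$-linear density of $G$ into a pointwise lower bound on every triad of the reduced hypergraph---but the way you propose to resolve it does not go through. The $\varepsilon$-linear density hypothesis tests density against \emph{vertex} subsets $U\subseteq V(G)$, whereas the elements of the parts $V_{ij}$ in the reduced hypergraph are \emph{bipartite graphs} between $V_i$ and $V_j$, i.e.\ edge-subsets and not vertex-subsets; the phrase ``subsets aligned with the bipartite pieces'' has no meaning under the linear-density hypothesis. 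Testing linear density on vertex subsets such as $V_i\cup V_j\cup V_k$ (or larger unions) only controls, after correcting for degenerate triples, the \emph{average} of the piece-triple densities on the triad. A Markov/cleaning argument cannot upgrade this average to the pointwise $d+\eta/2$ bound you need: nothing at this level rules out a hypergraph whose crossing density on every triad is $d+\eta$ while almost all piece-triples have density just below $d-\eta/4$ and a vanishing fraction sit near $1$; such a hypergraph would have reduced triad density $O(\eta)\ll d$, and discarding a small fraction of pieces does not repair it. In Reiher's actual argument this is precisely where one needs the nontrivial equivalence among several uniform-density notions (vertex-based $\llcorner$-density versus the bipartite-graph-based $\llcorner\!\lrcorner$- and $\vdots$-densities of his survey), which shows that $\varepsilon$-linear density already forces a stronger density condition that directly controls individual piece-triple densities. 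That equivalence is a substantial theorem in its own right---it is not a consequence of the regularity lemma plus a deletion argument---and without invoking it your third step has a genuine hole.
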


In order to find such an embedding it will often be useful to pass to a, usually much smaller, subhypergraph
but in return gain additional structural information.
We will consider the notions of subhypergraphs as defined in this paragraph.
Let $H$ be an $n$-partitioned hypergraph.
We say that $H'$ is an \emph{induced subhypergraph} of $H$
if there exists $I\subseteq [n]$ such that
$H'$ is an $|I|$-partitioned hypergraph,
its vertex parts are the parts $V_{ij}$ of $H$ such that $i,j\in I$ and
$H'$ consists of all edges of $H$ which only contain vertices in the vertex parts forming $H'$.
We refer to the set $I$ as the \emph{index set} of $H'$.
In the setting as above,
we refer to $H'$ as to the subhypergraph of $H$ \emph{induced} by $I$.
A \emph{subhypergraph} of $H$ is any spanning subhypergraph of an induced subhypergraph of $H$,
i.e., a subhypergraph of $H$ is obtained from an induced subhypergraph by removing some edges.

The additional structural information we will obtain is going to, among other things,
consist of restricting degrees of vertices of triads.
We work with various notions of the degree of a vertex that we now define.
Fix $1\le i<j<k\le n$, which determines a triad of $H$.
The \emph{degree} of a vertex $v\in V_{ij}\cup V_{ik}\cup V_{jk}$ in the $(i,j,k)$-triad
is the number of edges of the triad containing $v$
divided by the product of the sizes of the two parts of the triad that do not contain $v$.
We adopt the following notation for the degree of a vertex $v$ which encapsulates the information about which triad the degree is referring to as well as the role $v$ plays in it (whether it is a left, a right or a top vertex): we write
$d_{ij\to k}(v)$ for the degree of $v\in V_{ij}$ in the $(i,j,k)$-triad, so the number of edges in the $(i,j,k)$-triad containing $v$ divided by $|V_{ik}|\cdot|V_{jk}|$,
$d_{ik\to j}(v)$ for the degree of $v\in V_{ik}$ in the $(i,j,k)$-triad, so the number of edges in the $(i,j,k)$-triad containing $v$ divided by $|V_{ij}|\cdot|V_{jk}|$, and
$d_{jk\to i}(v)$ for the degree of $v\in V_{jk}$ in the $(i,j,k)$-triad, so the number of edges of the $(i,j,k)$-triad containing $v$ divided by $|V_{ij}|\cdot|V_{ik}|$.
The \emph{codegree} of vertices $v\in V_{ij}$ and $v'\in V_{ik}$
is the number of edges of the $(i,j,k)$-triad containing $v$ and $v'$ divided by $|V_{jk}|$;
we write $d_{ij|ik}(v,v')$ for the codegree of vertices $v\in V_{ij}$ and $v'\in V_{ik}$.
In the analogous way, we define the codegree for the other two pairs of the parts of the $(i,j,k)$-triad and
write $d_{ij|jk}(v,v')$ and $d_{ik|jk}(v,v')$ for these quantities.
If $v$ is a top vertex, i.e., $v\in V_{ik}$,
a (left) vertex $v'\in V_{ij}$ is a \emph{left neighbor} of $v$ if the $(i,j,k)$-triad has an edge containing both $v$ and $v'$ and
a (right) vertex $v'\in V_{jk}$ is a \emph{right neighbor} of $v$ if the $(i,j,k)$-triad has an edge containing both $v$ and $v'$.
The \emph{left degree} of a top vertex $v\in V_{ik}$
is the number of left neighbors of $v$ divided by $|V_{ij}|$ and
the \emph{right degree} of a top vertex $v\in V_{ik}$
is the number of right neighbors of $v$ divided by $|V_{jk}|$;
the left and the right degree of $v$ are denoted by $d_{ik\to ij}(v)$ and $d_{ik\to jk}(v)$, respectively.
In the analogous way,
we define top and right neighbors of a left vertex,
top and left neighbors of a right vertex,
the top degree and the right degree of a left vertex, and
the top degree and the left degree of a right vertex;
the last four quantities are denoted $d_{ij\to ik}(v)$, $d_{ij\to jk}(v)$, $d_{jk\to ik}(v)$ and $d_{jk\to ij}(v)$, respectively.

\section{Intersection lemmas}
\label{sec:intersection}

In this section, we present tools to deal with the following scenario,
which appears repeatedly in our arguments.
Given an $N$-partitioned hypergraph $H$ there are for each triad vertices with a ``good'' property; for instance such a property might be 
left vertices of high degree.
In $H$ we would like to choose a representative vertex that has the good property with respect to \emph{every} triad it belongs to.
Since any given part of $H$ belongs to many different triads,
even if the number of vertices in the part with the good property with respect to every single triad involving the part
is linear in the size of the part,
there may be no vertex that is good for all triads involving the part in the hypergraph $H$ itself.
However, leveraging the power of Ramsey theory,
it is possible to find such a vertex when we pass to an induced subhypergraph of $H$.

The goal of this section is to establish several lemmas that can be applied in the scenario described above and
in a more general setting,
when the ``good'' property does not depend only on a single triad but on two or more triads.

\subsection{General intersection lemma}
\label{subsec:tool}

We start with presenting a general lemma (Lemma~\ref{lm:intersect}),
which directly yields most of the results in this section.
Its proof is based on the following lemma,
which in turn is based on the following result of Erd\H os~\cite{Erd64}:
every $3$-uniform $n$-vertex hypergraph with $\Omega(n^{3-1/\ell^2})$ edges
contains the complete $3$-uniform tripartite hypergraph with $\ell$ vertices in each of its parts.

\begin{lemma}
\label{lm:tripartite}
For every $\delta>0$ and $n\in\NN$, there exists $N\in\NN$ such that
for all sets $I$, $J$ and $K$ each with at least $N$ elements, every set $X$ and
all subsets $X_{ijk}\subseteq X$, $i\in I$, $j\in J$ and $k\in K$, such that $|X_{ijk}|\ge\delta |X|$,
there exist $x\in X$ and subsets $I'\subseteq I$, $J'\subseteq J$ and $K'\subseteq K$ each with at least $n$ elements such that
$x\in X_{ijk}$ for all $i\in I'$, $j\in J'$ and $k\in K'$.
\end{lemma}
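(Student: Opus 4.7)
The plan is to combine a simple double-counting argument with Erd\H{o}s's theorem on complete multipartite subhypergraphs, which is cited just above the statement. For each $x\in X$, define the $3$-partite $3$-uniform hypergraph $H_x$ with parts $I$, $J$, $K$ whose edges are exactly those triples $(i,j,k)\in I\times J\times K$ for which $x\in X_{ijk}$. Double counting the pairs $(x,(i,j,k))$ with $x\in X_{ijk}$ gives
\[\sum_{x\in X} e(H_x) \;=\; \sum_{(i,j,k)\in I\times J\times K} |X_{ijk}| \;\ge\; \delta\,|X|\,|I|\,|J|\,|K|,\]
so by averaging there exists $x\in X$ with $e(H_x)\ge \delta\,|I|\,|J|\,|K|$.

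Fix such an $x$. By passing to subsets of $I$, $J$, $K$ if necessary, we may assume $|I|=|J|=|K|=N$; then $H_x$ is a $3$-uniform hypergraph on $3N$ vertices with at least $\delta N^3$ edges. Choosing $N$ large enough that $\delta N^3$ exceeds the bound $c\,(3N)^{3-1/n^2}$ from Erd\H{o}s's theorem (with $\ell=n$), we conclude that $H_x$ contains a copy of the complete $3$-uniform tripartite hypergraph with $n$ vertices in each part. Because $H_x$ has only tripartite edges with one vertex in each of $I,J,K$, any such complete tripartite subhypergraph must have its three parts lying one each in $I$, $J$, and $K$ (otherwise some triple inside a single $I$, $J$, or $K$ would need to be an edge, which is impossible). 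Taking these three parts as $I'$, $J'$, $K'$ yields the desired subsets, since by construction $x\in X_{ijk}$ whenever $(i,j,k)\in I'\times J'\times K'$.

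There is no real obstacle here: the whole content is packaging Erd\H{o}s's theorem. The only mild subtleties are ensuring one is allowed to equalize the sizes of $I$, $J$, $K$ (which is free, as we only need $N$ elements in each) and verifying that a complete tripartite subhypergraph inside a tripartite host respects the host's tripartition. Both are immediate.
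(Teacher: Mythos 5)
Your argument is the same as the paper's: average over $x\in X$ to find one $x$ whose incidence hypergraph $H_x$ is dense, then invoke Erd\H{o}s's complete-tripartite theorem. There is, however, a quantifier-ordering slip. You first fix $x$ with $e(H_x)\ge\delta\,|I|\,|J|\,|K|$ and only afterwards say ``by passing to subsets of $I$, $J$, $K$ if necessary, we may assume $|I|=|J|=|K|=N$,'' concluding that $H_x$ has at least $\delta N^3$ edges on these smaller sets. That step does not follow: once $x$ is fixed, an arbitrary choice of $N$-element subsets of $I$, $J$, $K$ may contain very few (possibly zero) edges of $H_x$, since the edges of $H_x$ could be concentrated away from the chosen subsets.

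The fix is immediate and is exactly what the paper does: first replace $I$, $J$, $K$ by arbitrary $N$-element subsets (harmless, since the hypothesis $|X_{ijk}|\ge\delta|X|$ holds for \emph{every} triple), and \emph{then} do the averaging to pick $x$, which now has $e(H_x)\ge\delta N^3$ on the shrunken parts. With that reordering the rest of your argument is correct, including your remark that a complete tripartite subhypergraph inside a tripartite host must have one part in each of $I$, $J$, $K$.
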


\begin{proof}
Let $N$ be such that every $3$-uniform tripartite hypergraph with $N$ vertices in each of its parts and
at least $\delta N^3$ edges contains the complete $3$-uniform tripartite hypergraph with $n$ vertices in each of its parts;
the existence of such $N$ follows from the result of Erd\H os mentioned before the statement of the lemma.
Without loss of generality, we may suppose that each of the sets $I$, $J$ and $K$ have precisely $N$ elements.
Let $x$ be an element of $X$ that is contained in at least $\delta N^3$ sets $X_{ijk}$, $i\in I$, $j\in J$ and $k\in K$;
such an element $x$ exists by considering the average number of sets $X_{ijk}$ containing individual elements of $X$.
Next consider the $3$-uniform tripartite hypergraph $G$ with vertex set $I\times J\times K$ such that
$(i,j,k)\in I\times J\times K$ is an edge of $G$ if $x\in X_{ijk}$.
Since the hypergraph $G$ has at least $\delta N^3$ edges, there exist $I'\subseteq I$, $J'\subseteq J$ and $K'\subseteq K$,
each with $n$ elements, such that
every $(i,j,k)\in I'\times J'\times K'$ is an edge of $G$.
Hence, we have obtained that $x\in X_{ijk}$ for all $i\in I'$, $j\in J'$ and $k\in K'$.
\end{proof}

We derive the following from Lemma~\ref{lm:tripartite}.

\begin{lemma}
\label{lm:intersect}
For every $\delta>0$ and $n\in\NN$, there exists $N\in\NN$ such that
the following holds for every $N$-partitioned hypergraph $H$ and
every choice of subsets $W_{jij'kj''}\subset V_{ik}$, $i,k,j,j',j''\in [N]$, $j<i<j'<k<j''$, such that
$|W_{jij'kj''}|\ge\delta |V_{ik}|$.
There exists an induced $n$-partitioned subhypergraph $H'$ with index set $I\subseteq [N]$ and
there exist $w_{ik}$, $i<k$, $i,k\in I$, such that $w_{ik}\in W_{jij'kj''}$ for all $j,j',j''\in I$ satisfying $j<i<j'<k<j''$.
\end{lemma}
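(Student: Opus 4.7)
The plan is to derive Lemma~\ref{lm:intersect} from Lemma~\ref{lm:tripartite} by an iterated application, once for each pair of positions in the sought $n$-element index set. I would first choose $N$ large enough (precisely how large will be determined by iterating the quantitative bound of Lemma~\ref{lm:tripartite} a bounded number of times depending only on $n$ and $\delta$) and partition $[N]$ into $n$ consecutive position blocks $P_1 < P_2 < \cdots < P_n$, each of size at least some large parameter $n^\ast$. The desired index set $I = \{a_1 < a_2 < \cdots < a_n\}$ will be built by eventually selecting one element $a_r \in P_r$ per position.

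I would then process the position pairs $(p,q) \in \binom{[n]}{2}$ that impose a non-vacuous constraint (namely $2 \le p < q \le n-1$ with $q \ge p+2$) in lexicographic order; for the remaining boundary pairs any witness suffices. When processing the pair $(p,q)$ I fix elements $a_p \in P_p$ and $a_q \in P_q$ (at the first step they arise) and then invoke Lemma~\ref{lm:tripartite} with $X = V_{a_p a_q}$, with three index sets taken from the current outer position blocks (one union below $p$, one between $p$ and $q$, one above $q$), and with subsets $X_{jj'j''} = W_{j\,a_p\,j'\,a_q\,j''}$. By hypothesis each such subset has density at least $\delta$ in $V_{a_p a_q}$, so Lemma~\ref{lm:tripartite} produces a witness $w_{a_p a_q} \in V_{a_p a_q}$ together with refinements of the three unions; I would then shrink each individual $P_s, P_t, P_u$ by intersecting with the respective refinement.

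The main obstacle is that a single witness $w_{a_p a_q}$ must be simultaneously valid for every triple $(s,t,u)$ with $s<p<t<q<u$, whereas Lemma~\ref{lm:tripartite} in its stated form treats only one triple of index sets per application, and moreover its refinement of a union of several blocks could be concentrated in just one of them. I would resolve both issues together by splitting the step for $(p,q)$ into separate applications of Lemma~\ref{lm:tripartite}, one per outer triple $(P_s, P_t, P_u)$, processed sequentially; the witness $w_{a_p a_q}$ is locked in by the first application of this sequence and carried through the subsequent ones, with a pigeonhole argument inside the refined union ensuring that each individual $P_r$ remains populated after intersection. Iterating across all $\binom{n}{2}$ position pairs with $O(n^3)$ outer triples per pair uses $O(n^5)$ applications of Lemma~\ref{lm:tripartite}; setting $n^\ast$ (and hence $N$) large enough to absorb this many iterations of the quantitative bound of Lemma~\ref{lm:tripartite} guarantees that each $P_r$ still contains an element at the end, from which $a_r$ is extracted, yielding the desired $I$ and witnesses.
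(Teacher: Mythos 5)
Your plan runs into a genuine gap at the crucial step where you ``lock in'' a witness from the first application of Lemma~\ref{lm:tripartite} and carry it through subsequent applications for the same pair $(p,q)$. Lemma~\ref{lm:tripartite}, applied to a new outer triple $(P_{s_2},P_{t_2},P_{u_2})$, produces \emph{some} vertex $x'\in V_{a_p a_q}$ that is dense in the new family; there is no reason for it to coincide with the vertex $x$ found for $(P_{s_1},P_{t_1},P_{u_1})$. In fact, nothing prevents the sets $W_{j\,a_p\,j'\,a_q\,j''}$ indexed by the second triple from entirely avoiding $x$: the only hypothesis is a density lower bound on each individual $W$, not any correlation between the indicator of $x$ across different $(j,j',j'')$. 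You cannot repair this by restricting the $W$'s to ``those containing $x$'' before the second application, because you have no density lower bound for that restricted family. The pigeonhole you invoke inside a refined union also does not help, because pigeonhole only guarantees that \emph{one} block $P_s$ retains many elements of the refined set $I'$; the remaining blocks $P_{s'}$, $s'\neq s$, may be disjoint from $I'$, and any element selected there later has no guarantee that $x\in W_{\,\cdot\,a_p\,\cdot\,a_q\,\cdot}$.

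This is exactly the obstruction that the paper's Ramsey step is designed to remove. The paper first colors the $(3n+2)$-tuples of $[N]$ according to whether the relevant $n^3$ sets $W$ admit a \emph{common} vertex, and invokes Ramsey's theorem to find a large monochromatic subset. A single application of Lemma~\ref{lm:tripartite} then shows the common color must be the good one, after which every appropriate $(3n+2)$-tuple in the monochromatic subset automatically yields a common witness, no matter which $n$ indices are finally selected. Your iterated approach replaces this global uniformity with a sequence of local applications, and the witnesses coming out of those local applications need not agree. Without a Ramsey-type step (or some other mechanism making the existence of a common witness stable under passing to subtuples), the local-to-global step in your argument does not go through.
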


\begin{proof}
Let $\delta>0$ and $n\in\NN$ be given, and
let $n'$ be the value of $N$ from Lemma~\ref{lm:tripartite} for $\delta$ and $n$;
we can assume without loss of generality that $n'\ge n^2$.
We prove that the statement of the lemma holds with $N=R_2^{3n+2}(3n'+2)$.

Let an $N$-partitioned hypergraph $H$ and subsets $W_{jij'kj''}$ as in the statement of the lemma be given.
We construct an auxiliary $2$-edge-colored $(3n+2)$-uniform complete hypergraph $G$ with vertex set $[N]$ as follows:
a $(3n+2)$-tuple $i_1<\ldots<i_{3n+2}$ is colored blue
if all the $n^3$ sets $W_{ji_{n+1}j'i_{2n+2}j''}$ have a common vertex
where $j\in\{i_1,\ldots,i_n\}$, $j'\in\{i_{n+2},\ldots,i_{2n+1}\}$ and $j''\in\{i_{2n+3},\ldots,i_{3n+2}\}$;
otherwise, the $(3n+2)$-tuple is colored red.
By Ramsey's Theorem there exist $3n'+2$ indices $i_1<\ldots<i_{3n'+2}$ such that
all $(3n+2)$-tuples formed by these indices have the same color.

Set $I=\{i_1,\ldots,i_{n'}\}$, $J=\{i_{n'+2},\ldots,i_{2n'+1}\}$ and $K=\{i_{2n'+3},\ldots,i_{3n'+2}\}$ and
$X_{jj'j''}=W_{ji_{n'+1}j'i_{2n'+2}j''}$ for $j\in I$, $j'\in J$ and $j''\in K$.
By Lemma~\ref{lm:tripartite},
there exist $I'\subseteq I$, $J'\subseteq J$ and $K'\subseteq K$, each containing $n$ elements, such that
the $n^3$ sets $X_{jj'j''}$, $j\in I'$, $j'\in J$ and $j''\in K$, have a non-empty intersection.
This implies that the $(3n+2)$-tuple $\{i_{n'+1},i_{2n'+2}\}\cup I'\cup J'\cup K'$ is colored blue.
Hence, the common color of all our $(3n+2)$-tuples must be blue.

We show that
the conclusion of the lemma holds with the set $I$ consisting of $i_{k(n+1)}$ for $k\in [n]$ (note that $n(n+1)\le 3n'+2$).
Consider $k<k'$, $k,k'\in [n]$ and
a $(3n+2)$-tuple $F\subseteq\{i_1,\ldots,i_{3n'+2}\}$ such that
$F$ contains all indices $i_{n+1},\ldots,i_{n(n+1)}$,
$F$ contains exactly $n$ indices among $i_1,\ldots,i_{k(n+1)-1}$,
$F$ contains exactly $n$ indices among $i_{k(n+1)+1},\ldots,i_{k'(n+1)-1}$, and
$F$ contains exactly $n$ indices among $i_{k'(n+1)+1},\ldots,i_{3n'+2}$ (note that $3n'+2\ge n^2+2n$).
Since the color of the $(3n+2)$-tuple $F$ is blue,
there exists $w\in V_{i_{k(n+1)}i_{k'(n+1)}}$ that
is contained in all sets $W_{ji_{k(n+1)}j'i_{k'(n+1)}j''}$ with $j<i_{k(n+1)}<j'<i_{k'(n+1)}<j''$ and $j,j',j''\in F$,
in particular, for all such $j,j',j''\in I$ as $I\subseteq F$.
The conclusion of the lemma now follows.
\end{proof}

\subsection{Specific intersection lemmas}
\label{subsec:interspecific}

We now derive several corollaries of Lemma~\ref{lm:intersect} that match scenarios used in our arguments.
We remark that the lemmas in this subsection are implicitly proven in~\cite{ReiRS18} using a direct iterative approach;
alternative proofs based on Ramsey's Theorem can be found in~\cite{GarKL21}.
The first two lemmas aim to identify common vertices based on candidate sets depending on a single triad.

\begin{lemma}
\label{lm:extend0}
For every $\delta>0$ and $n\in\NN$,
there exists $N\in\NN$ such that
the following holds for every $N$-partitioned hypergraph $H$ and
every choice of subsets $C_{ijk}\subseteq V_{ik}$, $i<j<k$, $i,j,k\in [N]$, such that $|C_{ijk}|\ge\delta |V_{ik}|$.
There exists an induced $n$-partitioned subhypergraph $H'$ with index set $I\subseteq [N]$ and
there exist $\gamma_{ik}$, $i<k$, $i,k\in I$, such that $\gamma_{ik}\in C_{ijk}$ for all $j\in I$ such that $i<j<k$.
\end{lemma}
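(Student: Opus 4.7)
The lemma is a straightforward corollary of Lemma~\ref{lm:intersect}. The general lemma provides a representative vertex in candidate sets that depend on three auxiliary indices $j<i<j'<k<j''$ surrounding the pair $(i,k)$; here we only need the representative to be good in candidate sets that depend on a single middle index $j$ with $i<j<k$. The plan is to fit our single-middle-index scenario into the framework of Lemma~\ref{lm:intersect} by letting the candidate set $W$ depend only on the middle auxiliary index $j'$, and to deal with the fact that Lemma~\ref{lm:intersect} says nothing when no $j<i$ or no $j''>k$ exists in the output index set by padding the size of the output index set by two and discarding the extremes.

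\textbf{Construction.} Given $\delta>0$ and $n\in\NN$, let $N$ be the value of $N$ given by Lemma~\ref{lm:intersect} applied with $\delta$ and with $n$ replaced by $n+2$. Let $H$ be an $N$-partitioned hypergraph and let $C_{ijk}\subseteq V_{ik}$ for $i<j<k$ in $[N]$ be given with $|C_{ijk}|\ge\delta|V_{ik}|$. For every quintuple $j<i<j'<k<j''$ of indices in $[N]$, define
\[
W_{jij'kj''}:=C_{ij'k},
\]
so that $|W_{jij'kj''}|=|C_{ij'k}|\ge\delta|V_{ik}|$ as required. Apply Lemma~\ref{lm:intersect} to obtain an induced $(n+2)$-partitioned subhypergraph $H_0$ of $H$ with index set $I_0\subseteq[N]$ and vertices $w_{ik}\in V_{ik}$ for $i<k$, $i,k\in I_0$, such that $w_{ik}\in W_{jij'kj''}=C_{ij'k}$ whenever $j,j',j''\in I_0$ satisfy $j<i<j'<k<j''$. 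Let $I$ be $I_0$ with its smallest and largest elements removed, let $H'$ be the induced subhypergraph of $H$ with index set $I$, and set $\gamma_{ik}:=w_{ik}$ for $i<k$ in $I$.

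\textbf{Verification.} It remains to check that $\gamma_{ik}\in C_{ijk}$ whenever $i,j,k\in I$ and $i<j<k$. Let $j_-:=\min I_0$ and $j_+:=\max I_0$. Since $i$ and $k$ belong to $I$, they are neither the minimum nor the maximum of $I_0$, so $j_-<i$ and $k<j_+$; moreover $j_-,j_+\in I_0$ and $j\in I\subseteq I_0$. Therefore the quintuple $(j_-,i,j,k,j_+)$ has entries in $I_0$ and satisfies $j_-<i<j<k<j_+$, so the conclusion of Lemma~\ref{lm:intersect} yields $\gamma_{ik}=w_{ik}\in W_{j_-\,i\,j\,k\,j_+}=C_{ijk}$, as required. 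The main (minor) obstacle is bookkeeping the boundary: one must pad the index set so that every pair in the output has some auxiliary index below it and some above it, which is what the $+2$ buffer accomplishes.
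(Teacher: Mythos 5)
Your proof is correct and follows essentially the same route as the paper's: both set $W$ to depend only on the middle auxiliary index via $C$, apply Lemma~\ref{lm:intersect} with $n+2$ in place of $n$, and discard the smallest and largest indices of the resulting index set so that every pair in $I$ is flanked by a padding index on each side. The only difference is cosmetic relabeling of the auxiliary subscripts.
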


\begin{proof}
Let $\delta>0$ and $n\in\NN$ be given and apply Lemma~\ref{lm:intersect} with $n+2$ to get $N$.
Given an $N$-partitioned hypergraph $H$ and subsets $C_{ijk}$,
set $W_{j'ijkj''}$ to be $C_{ijk}$ for any $j'<i$ and $j''>k$.
By Lemma~\ref{lm:intersect}, there exists an $(n+2)$-element index set $I_0\subseteq [N]$ and
$\gamma_{ik}$, $i<k$, $i,k\in I_0$, such that $\gamma_{ik}\in W_{j'ijkj''}$ for all $j'<i<j<k<j''$ such that $j,j',j''\in I_0$.
The statement of the lemma holds with the set $I$ obtained from $I_0$ by removing its smallest and largest elements.
\end{proof}

\begin{lemma}
\label{lm:extend0a}
For every $\delta>0$ and $n\in\NN$,
there exists $N\in\NN$ such that
the following holds for every $N$-partitioned hypergraph $H$ and
every choice of subsets $A_{ijk}\subseteq V_{ij}$, $i<j<k$, $i,j,k\in [N]$, such that $|A_{ijk}|\ge\delta |V_{ij}|$.
There exists an induced $n$-partitioned subhypergraph $H'$ with index set $I\subseteq [N]$ and
there exist $\alpha_{ij}$, $i<j$, $i,j\in I$, such that $\alpha_{ij}\in A_{ijk}$ for all $k\in I$ such that $k>j$.
\end{lemma}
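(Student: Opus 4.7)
The plan is to mirror the proof of Lemma~\ref{lm:extend0}, invoking Lemma~\ref{lm:intersect} once and then restricting to a cleverly chosen subset of the resulting index set. The only substantive difference is how the candidate sets $A_{ijk}$ are encoded as the five-index sets $W_{j_1 i j_2 j j_3}$ required by Lemma~\ref{lm:intersect}. Since $A_{ijk} \subseteq V_{ij}$ depends on the rightmost index $k > j$, whereas $C_{ijk} \subseteq V_{ik}$ in Lemma~\ref{lm:extend0} depends on the middle index, I would match the pair $(i,j)$ here with the pair $(i,k)$ of Lemma~\ref{lm:intersect} and absorb the $k$-dependence into the largest of the surrounding indices.

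Concretely, given $\delta > 0$ and $n \in \NN$, apply Lemma~\ref{lm:intersect} with parameter $2n$ and the given $\delta$ to obtain the value of $N$. Given an $N$-partitioned hypergraph $H$ and subsets $A_{ijk}$ with $|A_{ijk}| \ge \delta |V_{ij}|$, set $W_{j_1 i j_2 j j_3} := A_{ij j_3}$ for every $j_1 < i < j_2 < j < j_3$; the hypothesis immediately gives $|W_{j_1 i j_2 j j_3}| \ge \delta |V_{ij}|$. Lemma~\ref{lm:intersect} then yields a $2n$-element index set $I_0 \subseteq [N]$ and vertices $\alpha_{ij} \in V_{ij}$, one for each pair $i < j$ in $I_0$, such that $\alpha_{ij} \in A_{ij j_3}$ whenever $j_1, j_2, j_3 \in I_0$ satisfy $j_1 < i < j_2 < j < j_3$. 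Enumerating $I_0 = \{x_1 < x_2 < \cdots < x_{2n}\}$, I would then set $I := \{x_2, x_4, \ldots, x_{2n}\}$, which has $n$ elements. To verify the conclusion, fix $i = x_{2a}$ and $j = x_{2b}$ in $I$ with $a < b$, together with $k \in I$ satisfying $k > j$; the choices $j_1 := x_1$, $j_2 := x_{2a+1}$ and $j_3 := k$ all lie in $I_0$ and satisfy $j_1 < i < j_2 < j < j_3$, so $\alpha_{ij} \in A_{ijk}$ as required.

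The main technical point, and the only real deviation from the proof of Lemma~\ref{lm:extend0}, is that here one of the framing indices (namely $j_2$) must lie strictly between $i$ and $j$. In Lemma~\ref{lm:extend0}, both framing indices lie outside the interval $[i, k]$, so they can be secured simply by deleting the minimum and maximum of the output of Lemma~\ref{lm:intersect}; here we instead need $I$ to be \emph{spaced out} within $I_0$, so that between any two elements of $I$ there is always a spare element of $I_0 \setminus I$ available to play the role of $j_2$. This is the reason one applies Lemma~\ref{lm:intersect} with $2n$ rather than $n+2$ elements.
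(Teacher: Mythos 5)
Your proof is correct and follows essentially the same route as the paper: you invoke Lemma~\ref{lm:intersect} with parameter $2n$, encode $A_{ijk}$ as $W_{j'ij''jk}$ (which is exactly the paper's encoding under a renaming of the auxiliary indices), and take $I$ to be the even-indexed elements of the resulting $2n$-element set so that an odd-indexed element is always available to serve as the middle framing index $j''$ between $i$ and $j$. The paper states this last step in one sentence without the explicit verification you supply; your closing remark correctly identifies why $2n$ is needed here whereas $n+2$ suffices in Lemma~\ref{lm:extend0}.
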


\begin{proof}
Let $\delta>0$ and $n\in\NN$ be given and apply Lemma~\ref{lm:intersect} with $2n$ to get $N$.
Given an $N$-partitioned hypergraph $H$ and subsets $A_{ijk}$,
set $W_{j'ij''jk}$ to be $A_{ijk}$ for $j'<i<j''<j$.
By Lemma~\ref{lm:intersect}, there exist a $2n$-element index set $I_0\subseteq [N]$ and
$\alpha_{ij}$, $i<j$, $i,j\in I_0$, such that $\alpha_{ij}\in W_{j'ij''jk}$ for all $j',j'',k\in I_0$ satisfying $j'<i<j''<j<k$.
The statement of the lemma holds with the set $I$ containing the second, fourth, sixth, etc. elements of $I_0$.
\end{proof}

The following lemma can be obtained by Lemma~\ref{lm:extend0a} by reversing the ordering of the index set.

\begin{lemma}
\label{lm:extend0b}
For every $\delta>0$ and $n\in\NN$,
there exists $N\in\NN$ such that
the following holds for every $N$-partitioned hypergraph $H$ and
every choice of subsets $B_{ijk}\subseteq V_{jk}$, $i<j<k$, $i,j,k\in [N]$, such that $|B_{ijk}|\ge\delta |V_{jk}|$.
There exists an induced $n$-partitioned subhypergraph $H'$ with index set $I\subseteq [N]$ and
there exist $\beta_{jk}$, $j<k$, $j,k\in I$, such that $\beta_{jk}\in B_{ijk}$ for all $i\in I$ such that $i<j$.
\end{lemma}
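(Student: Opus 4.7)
The plan is to deduce Lemma~\ref{lm:extend0b} from Lemma~\ref{lm:extend0a} by reversing the ordering of the index set $[N]$, as hinted by the remark preceding the statement. Given the input $N$-partitioned hypergraph $H$ with parts $V_{ij}$ and the subsets $B_{ijk} \subseteq V_{jk}$, I would construct an auxiliary $N$-partitioned hypergraph $\tilde H$ by setting $\tilde V_{ab} := V_{N+1-b,\,N+1-a}$ for $1 \le a < b \le N$ and declaring the edges of $\tilde H$ to be exactly the edges of $H$, now viewed inside the relabeled parts. A direct verification shows that the $(i',j',k')$-triad of $\tilde H$ (with $i'<j'<k'$) is precisely the $(i,j,k)$-triad of $H$ for $(i,j,k) := (N+1-k',\,N+1-j',\,N+1-i')$, with the roles of the ``left'' and ``right'' parts of the triad swapped. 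Hence $\tilde H$ is a genuine $N$-partitioned hypergraph in the sense of Section~\ref{sec:partitioned}.

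Under this identification, each subset $B_{ijk} \subseteq V_{jk}$ with $i<j<k$ becomes a subset $\tilde A_{i'j'k'} \subseteq \tilde V_{i'j'}$ of the same relative size, where $(i',j',k') := (N+1-k,\,N+1-j,\,N+1-i)$; in particular $|\tilde A_{i'j'k'}| \ge \delta\,|\tilde V_{i'j'}|$. I would then apply Lemma~\ref{lm:extend0a} to $\tilde H$ together with the family $\{\tilde A_{i'j'k'}\}$ for the same $\delta$ and $n$. This produces an $n$-element index set $I' \subseteq [N]$ and vertices $\tilde\alpha_{i'j'} \in \tilde V_{i'j'}$ for $i'<j'$ in $I'$ satisfying $\tilde\alpha_{i'j'} \in \tilde A_{i'j'k'}$ for every $k' \in I'$ with $k'>j'$.

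Finally I would translate back by setting $I := \{N+1-a : a \in I'\}$ and $\beta_{jk} := \tilde\alpha_{N+1-k,\,N+1-j}$ for $j<k$ in $I$. Writing $i' := N+1-k$, $j' := N+1-j$ and $k' := N+1-i$, the constraint ``$k' \in I'$ with $k'>j'$'' corresponds under the bijection $a \mapsto N+1-a$ to the constraint ``$i \in I$ with $i<j$'', and by construction $\tilde A_{i'j'k'} = B_{ijk}$. Thus $\beta_{jk} \in B_{ijk}$ for all $i \in I$ with $i<j$, which is exactly the conclusion of the lemma. I do not expect any real obstacle beyond this index bookkeeping, since Lemmas~\ref{lm:extend0a} and~\ref{lm:extend0b} are formally equivalent under reversal of the ordering on $[N]$; the entire content of the argument is the order-reversal symmetry of the partitioned hypergraph framework.
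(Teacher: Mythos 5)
Your proposal is correct and is exactly the argument the paper intends: the paper proves Lemma~\ref{lm:extend0b} only by the one-line remark that it follows from Lemma~\ref{lm:extend0a} ``by reversing the ordering of the index set,'' and your construction of $\tilde H$ with $\tilde V_{ab} := V_{N+1-b,\,N+1-a}$, the identification of $B_{ijk}$ with $\tilde A_{i'j'k'}$, and the translation back via $I := \{N+1-a : a \in I'\}$ fills in that remark accurately. The index bookkeeping checks out, so there is nothing to add.
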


The next lemma follows from Lemmas~\ref{lm:extend0a} and~\ref{lm:extend0b}.

\begin{lemma}
\label{lm:extend0ab}
For every $\delta>0$ and $n\in\NN$,
there exists $N\in\NN$ such that
the following holds for every $N$-partitioned hypergraph $H$ and
every choice of vertices $\gamma_{ik}\in V_{ik}$, $i<k$, $i,k\in [N]$ such that
the degree of $\gamma_{ik}$ is at least $\delta$ in every $(i,j,k)$-triad, $i<j<k$, $i,j,k\in [N]$.
There exists an induced $n$-partitioned subhypergraph $H'$ with index set $I\subseteq [N]$ and
vertices $\alpha_{ij}$ and $\beta_{ij}$, $i<j$, $i,j\in I$, such that
$\{\alpha_{ij},\beta_{jk},\gamma_{ik}\}$ is an edge in the $(i,j,k)$-triad for all $i<j<k$, $i,j,k\in I$.
\end{lemma}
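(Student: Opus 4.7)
The plan is to chain Lemma~\ref{lm:extend0a} and Lemma~\ref{lm:extend0b}: first select good left vertices $\alpha_{ij}$ that have large codegree with the prescribed top vertices $\gamma_{ik}$, and then select good right vertices $\beta_{jk}$ that, together with the already-chosen $\alpha_{ij}$ and $\gamma_{ik}$, actually form edges. The key observation is that degree at least $\delta$ for $\gamma_{ik}$ in the $(i,j,k)$-triad forces, by an easy averaging argument, that many left vertices have codegree at least $\delta/2$ with $\gamma_{ik}$, which produces linear-sized candidate sets to which Lemma~\ref{lm:extend0a} can be applied.

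Concretely, given $\delta>0$ and $n$, I would first choose $n_1$ to be the value of $N$ guaranteed by Lemma~\ref{lm:extend0b} applied with parameters $\delta/2$ and $n$, and then set $N$ to be the value of $N$ guaranteed by Lemma~\ref{lm:extend0a} applied with parameters $\delta/2$ and $n_1$. Given an $N$-partitioned hypergraph $H$ and vertices $\gamma_{ik}\in V_{ik}$ with $d_{ik\to j}(\gamma_{ik})\ge\delta$ for every $i<j<k$, define
\[
A_{ijk}=\bigl\{v\in V_{ij}\;:\; d_{ij|ik}(v,\gamma_{ik})\ge \delta/2\bigr\}
\]
for every $i<j<k$ in $[N]$. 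Since the average of $d_{ij|ik}(v,\gamma_{ik})$ over $v\in V_{ij}$ equals $d_{ik\to j}(\gamma_{ik})\ge\delta$, a standard averaging argument yields $|A_{ijk}|\ge (\delta/2)|V_{ij}|$.

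Next I apply Lemma~\ref{lm:extend0a} to $H$ with the sets $A_{ijk}$, with parameters $\delta/2$ and $n_1$, to obtain an induced $n_1$-partitioned subhypergraph $H_1$ with index set $I_1\subseteq[N]$ and vertices $\alpha_{ij}\in V_{ij}$ for $i<j$, $i,j\in I_1$, such that $\alpha_{ij}\in A_{ijk}$ whenever $k\in I_1$ with $k>j$. Now for every triple $i<j<k$ with $i,j,k\in I_1$, define
\[
B_{ijk}=\bigl\{w\in V_{jk}\;:\;\{\alpha_{ij},w,\gamma_{ik}\}\text{ is an edge of the }(i,j,k)\text{-triad}\bigr\}.
\]
Since $\alpha_{ij}\in A_{ijk}$, the codegree $d_{ij|ik}(\alpha_{ij},\gamma_{ik})$ is at least $\delta/2$, which translates to $|B_{ijk}|\ge(\delta/2)|V_{jk}|$. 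I then apply Lemma~\ref{lm:extend0b} to $H_1$ with the sets $B_{ijk}$, with parameters $\delta/2$ and $n$, to obtain an induced $n$-partitioned subhypergraph $H'$ with index set $I\subseteq I_1$ and vertices $\beta_{jk}\in V_{jk}$ for $j<k$, $j,k\in I$, such that $\beta_{jk}\in B_{ijk}$ whenever $i\in I$ with $i<j$. The restricted $\alpha_{ij}$ (for $i,j\in I$) together with these $\beta_{jk}$ satisfy, by construction, that $\{\alpha_{ij},\beta_{jk},\gamma_{ik}\}$ is an edge of the $(i,j,k)$-triad for every $i<j<k$ in $I$, as required.

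There is no real obstacle; the only subtle point is making sure the codegree requirement used to define $A_{ijk}$ is strong enough so that, after Lemma~\ref{lm:extend0a} has fixed the $\alpha_{ij}$'s, the resulting $B_{ijk}$ are still linear in $|V_{jk}|$, which is why both lemmas are invoked with parameter $\delta/2$ rather than with $\delta$.
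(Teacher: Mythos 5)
Your proof is correct and follows essentially the same approach as the paper: define $A_{ijk}$ as the set of left vertices with codegree at least $\delta/2$ with $\gamma_{ik}$, apply Lemma~\ref{lm:extend0a} to extract common $\alpha_{ij}$, then define $B_{ijk}$ as the completions to edges and apply Lemma~\ref{lm:extend0b}. You have in fact the cleaner exposition, as the paper's opening sentence labels the two $N$-values in the wrong order relative to how the lemmas are then applied in the body of its proof.
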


\begin{proof}
Apply Lemma~\ref{lm:extend0a} with $\delta/2$ and $n$ to get $n'$ and
then apply Lemma~\ref{lm:extend0b} with $\delta/2$ and $n'$ to get $N$.
Consider an $N$-partitioned hypergraph $H$ and vertices $\gamma_{ik}\in V_{ik}$ such that
the degree of $\gamma_{ik}$ is at least $\delta$ in every $(i,j,k)$-triad, $i<j<k$, $i,j,k\in [N]$.

Define $A_{ijk}$ for $i<j<k$, $i,j,k\in [N]$, to be the set of vertices $w\in V_{ij}$ such that
$d_{ij|ik}(w,\gamma_{ik})\ge\delta/2$.
Since the degree of $\gamma_{ik}$ is at least $\delta$ in every $(i,j,k)$-triad, $i<j<k$, $i,j,k\in [N]$,
the set $A_{ijk}$ contains at least $\delta |V_{ij}|/2$ vertices for $i<j<k$, $i,j,k\in [N]$ (otherwise,
the degree of $\gamma_{ik}$ in the $(i,j,k)$-triad would be smaller than $\delta$).
We apply Lemma~\ref{lm:extend0a} to $H$ and the sets $A_{ijk}$
to get an $n'$-partitioned subhypergraph of $H$ induced by $I'\subseteq [N]$ and
vertices $\alpha_{ij}$, $i<j$, $i,j\in I'$, such that
$\alpha_{ij}\in A_{ijk}$ for all $i<j<k$, $i,j,k\in I'$.
In particular, it holds that $d_{ij|ik}(\alpha_{ij},\gamma_{ik})\ge\delta/2$ for all $i<j<k$, $i,j,k\in I'$.

We next define $B_{ijk}$ for $i<j<k$, $i,j,k\in I'$, to be the set of vertices $w\in V_{jk}$ such that
$\{\alpha_{ij},w,\gamma_{ik}\}$ is an edge (in the $(i,j,k)$-triad);
note that $|B_{ijk}|\ge\delta|V_{jk}|/2$ for all $i<j<k$, $i,j,k\in I'$.
We apply Lemma~\ref{lm:extend0b} to the $n'$-partitioned subhypergraph of $H$ induced by $I'$ and the sets $B_{ijk}$
to get an $n$-partitioned subhypergraph $H'$ of $H$ induced by $I\subseteq I'$ and vertices $\beta_{ij}$, $i<j$, $i,j\in I'$, such that
$\{\alpha_{ij},\beta_{jk},\gamma_{ik}\}$ is an edge in the $(i,j,k)$-triad for all $i<j<k$, $i,j,k\in I$.
\end{proof}

The next two lemmas identify common vertices based on candidate sets depending on a pair of intersecting triads.

\begin{lemma}
\label{lm:extendKST2}
For every $\delta>0$ and $n\in\NN$,
there exists $N\in\NN$ such that
the following holds for every $N$-partitioned hypergraph $H$ and
every choice of subsets $X_{ijk\ell}\subseteq V_{jk}$, $i<j<k<\ell$, $i,j,k,\ell\in [N]$, such that $|X_{ijk\ell}|\ge\delta |V_{jk}|$.
There exist an induced $n$-partitioned subhypergraph $H'$ with index set $I\subseteq [N]$ and
there exist $\omega_{jk}$, $j<k$, $j,k\in I$, such that $\omega_{jk}\in X_{ijk\ell}$ for all $i,\ell\in I$ such that $i<j<k<\ell$.
\end{lemma}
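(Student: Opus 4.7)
The plan is to reduce the statement directly to Lemma~\ref{lm:intersect} by treating the subsets $X_{ijk\ell}$ as candidate sets with a \emph{dummy} middle index, mirroring how Lemma~\ref{lm:extend0} was derived from Lemma~\ref{lm:intersect} (with the dummy indices playing the opposite role there). Given $\delta>0$ and $n\in\NN$, I would apply Lemma~\ref{lm:intersect} with threshold $\delta$ and parameter $2n-1$ to obtain the required $N$.

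For the given $N$-partitioned hypergraph $H$ and subsets $X_{ijk\ell}\subseteq V_{jk}$ of size at least $\delta|V_{jk}|$, I would set $W_{ijqk\ell}:=X_{ijk\ell}$ for every $i<j<q<k<\ell$ in $[N]$. Then $W_{ijqk\ell}\subseteq V_{jk}$ and $|W_{ijqk\ell}|\ge\delta|V_{jk}|$, while $W$ does not actually depend on the middle index $q$. Applying Lemma~\ref{lm:intersect} --- with its notation aligned so that its $(j,i,j',k,j'')$ correspond to our $(i,j,q,k,\ell)$ --- yields an induced $(2n-1)$-partitioned subhypergraph with index set $I_0\subseteq [N]$ and vertices $\omega_{jk}\in V_{jk}$, $j<k$, $j,k\in I_0$, such that $\omega_{jk}\in W_{ijqk\ell}=X_{ijk\ell}$ for all $i,q,\ell\in I_0$ with $i<j<q<k<\ell$.

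Finally, I would take $I\subseteq I_0$ to consist of the elements of $I_0$ at odd positions, so that $|I|=n$ and between any two elements of $I$ there lies at least one element of $I_0$. The hypergraph $H'$ is then the subhypergraph of $H$ induced by $I$. For any $j<k$ in $I$ and any $i,\ell\in I$ with $i<j<k<\ell$, the construction guarantees some $q\in I_0$ with $j<q<k$, so the conclusion $\omega_{jk}\in X_{ijk\ell}$ follows from the preceding paragraph.

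The only delicate step --- and the main obstacle to a one-line reduction --- is that the conclusion of Lemma~\ref{lm:intersect} is vacuous for a pair $(j,k)$ if no index of $I_0$ lies strictly between $j$ and $k$; this is precisely the reason for asking Lemma~\ref{lm:intersect} for an index set of size $2n-1$ instead of $n$ and thinning to every other element, exactly in the spirit of the arguments used in Lemmas~\ref{lm:extend0a} and~\ref{lm:extend0b}.
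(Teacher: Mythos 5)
Your proof is correct and takes essentially the same route as the paper's: you feed Lemma~\ref{lm:intersect} a dummy middle index, apply it at parameter $2n-1$, and then thin to the odd positions of the resulting index set so that a dummy index always exists between consecutive chosen indices. The paper's proof is identical up to relabelling.
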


\begin{proof}
Let $\delta>0$ and $n\in\NN$ be given and apply Lemma~\ref{lm:intersect} with $2n-1$ to get $N$.
Given an $N$-partitioned hypergraph $H$ and
subsets $X_{ijk\ell}\subseteq V_{jk}$, set $W_{ijj'k\ell}$ to be $X_{ijk\ell}$ for $j<j'<k$.
By Lemma~\ref{lm:intersect}, there exist a $(2n-1)$-element index set $I_0\subseteq [N]$ and
$\omega_{jk}$, $j<k$, $j,k\in I_0$, such that $\omega_{jk}\in W_{ijj'k\ell}$ for all $i<j<j'<k<\ell$ that $i,j',\ell\in I_0$.
The statement of the lemma holds with the set $I$ containing the first, third, fifth, etc. element of $I_0$.
\end{proof}

\begin{lemma}
\label{lm:extendKST1}
For every $\delta>0$ and $n\in\NN$,
there exists $N\in\NN$ such that
the following holds for every $N$-partitioned hypergraph $H$ and
every choice of subsets $C_{ijk\ell}\subseteq V_{ik}$, $i<j<k<\ell$, $i,j,k,\ell\in [N]$, such that $|C_{ijk\ell}|\ge\delta |V_{ik}|$.
There exists an induced $n$-partitioned subhypergraph $H'$ with index set $I\subseteq [N]$ and
there exist $\gamma_{ik}$, $i<k$, $i,k\in I$, such that $\gamma_{ik}\in C_{ijk\ell}$ for all $j,\ell\in I$ such that $i<j<k<\ell$.
\end{lemma}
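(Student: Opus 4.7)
The plan is to imitate the proofs of Lemma~\ref{lm:extend0} and Lemma~\ref{lm:extendKST2} and reduce directly to the master intersection lemma, Lemma~\ref{lm:intersect}. Recall that Lemma~\ref{lm:intersect} takes sets $W_{jij'kj''} \subseteq V_{ik}$ indexed by a five-tuple satisfying $j < i < j' < k < j''$, i.e., one index preceding $i$, one strictly between $i$ and $k$, and one following $k$. Our sets $C_{ijk\ell} \subseteq V_{ik}$ satisfy $i<j<k<\ell$, so they already carry an index between $i$ and $k$ (namely $j$) and one following $k$ (namely $\ell$); only the slot preceding $i$ is missing. Padding with a single dummy for that slot will suffice.

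Concretely, I would first apply Lemma~\ref{lm:intersect} with $\delta$ and $n+1$ in place of its $\delta$ and $n$ to obtain the value $N$ that I then return as the bound for the present lemma. Given the $N$-partitioned hypergraph $H$ and the sets $C_{ijk\ell}$, I set $W_{pijk\ell} := C_{ijk\ell}$ for every $p \in [N]$ with $p < i$; the value of $W_{pijk\ell}$ does not actually depend on $p$, so the density hypothesis $|W_{pijk\ell}| \ge \delta|V_{ik}|$ is inherited verbatim from the corresponding hypothesis on $C_{ijk\ell}$. Lemma~\ref{lm:intersect} then provides an induced $(n+1)$-partitioned subhypergraph of $H$ with index set $I_0 \subseteq [N]$ together with vertices $\gamma_{ik} \in V_{ik}$, $i<k$ with $i,k \in I_0$, such that $\gamma_{ik} \in W_{pijk\ell} = C_{ijk\ell}$ for all $p,j,\ell \in I_0$ satisfying $p<i<j<k<\ell$.

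To finish, let $I$ be the set obtained from $I_0$ by removing its smallest element, so that $|I| = n$, and let $H'$ be the induced subhypergraph of $H$ with index set $I$. Given any $i,k \in I$ and any $j,\ell \in I$ with $i<j<k<\ell$, take $p$ to be the removed smallest element of $I_0$: then $p \in I_0$ and $p < i$ (since $i \in I \subsetneq I_0$), so the conclusion of Lemma~\ref{lm:intersect} yields $\gamma_{ik} \in C_{ijk\ell}$, which is exactly the required property. There is no real obstacle beyond identifying the right dummy slot and trimming one element off of $I_0$; all the combinatorial content is already packaged in Lemma~\ref{lm:intersect}.
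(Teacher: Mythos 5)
Your proof is correct and is essentially identical to the paper's: both pad the missing slot before $i$ with a dummy index, apply Lemma~\ref{lm:intersect} with $n+1$, and then remove the smallest element of $I_0$ so that it can serve as the universal dummy $p<i$.
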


\begin{proof}
Let $\delta>0$ and $n\in\NN$ be given and apply Lemma~\ref{lm:intersect} with $n+1$ to get $N$.
Given an $N$-partitioned hypergraph $H$ and
subsets $C_{ijk\ell}\subseteq V_{ik}$, set $W_{j'ijk\ell}$ to be $C_{ijk\ell}$ for all $j'<i$.
By Lemma~\ref{lm:intersect}, there exist a $(n+1)$-element index set $I_0\subseteq [N]$ and
$\gamma_{ik}$, $i<k$, $i,k\in I_0$ such that $\gamma_{ik}\in W_{j'ijk\ell}$ for all $j'<i<j<k<\ell$ that $j',j,\ell\in I_0$.
The statement of the lemma holds with the set $I$ obtained from $I_0$ by removing its first element.
\end{proof}

The following lemma is obtained by Lemma~\ref{lm:extendKST1rev} by reversing the ordering of the index set.

\begin{lemma}
\label{lm:extendKST1rev}
For every $\delta>0$ and $n\in\NN$,
there exists $N\in\NN$ such that
the following holds for every $N$-partitioned hypergraph $H$ and
every choice of subsets $C_{ijk\ell}\subseteq V_{j\ell}$, $i<j<k<\ell$, $i,j,k,\ell\in [N]$, such that $|C_{ijk\ell}|\ge\delta |V_{j\ell}|$.
There exists an induced $n$-partitioned subhypergraph $H'$ with index set $I\subseteq [N]$ and
there exist $\gamma_{j\ell}$, $j<\ell$, $j,\ell\in I$, such that $\gamma_{j\ell}\in C_{ijk\ell}$ for all $i,k\in I$ such that $i<j<k<\ell$.
\end{lemma}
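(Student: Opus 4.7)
The plan is to obtain Lemma~\ref{lm:extendKST1rev} as a direct consequence of Lemma~\ref{lm:extendKST1} by reversing the linear order on the indices, mirroring the way Lemma~\ref{lm:extend0b} was derived from Lemma~\ref{lm:extend0a}. Given $\delta>0$ and $n\in\NN$, I would take $N$ to be the constant produced by Lemma~\ref{lm:extendKST1} for the same parameters.

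Given an input $N$-partitioned hypergraph $H$, I would form a new $N$-partitioned hypergraph $\widetilde H$ on the same vertex set and edge set by relabeling its parts as $\widetilde V_{ij} := V_{N+1-j,\,N+1-i}$ for $1\le i<j\le N$; in other words, each original index $a\in[N]$ is renamed $N+1-a$. Any edge of $H$ lying in an $(a,b,c)$-triad then lies in the $(N+1-c,\,N+1-b,\,N+1-a)$-triad of $\widetilde H$, so $\widetilde H$ is a legitimate partitioned hypergraph of the same density. The key effect of the reversal is that while the role of the top vertex of each triad is preserved, the roles of left and right are swapped, and more importantly the first and third of four sorted indices $i<j<k<\ell$ are interchanged with the second and fourth.

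For each quadruple $i'<j'<k'<\ell'$ in $[N]$, I would set $\widetilde C_{i'j'k'\ell'} := C_{N+1-\ell',\,N+1-k',\,N+1-j',\,N+1-i'}$, which is a subset of $V_{N+1-k',\,N+1-i'} = \widetilde V_{i'k'}$ of relative size at least $\delta$, matching the hypothesis of Lemma~\ref{lm:extendKST1}. Applying that lemma to $\widetilde H$ with these candidate sets yields an index set $\widetilde I\subseteq[N]$ of size $n$ and vertices $\gamma_{i'k'}\in\widetilde V_{i'k'}$ for $i'<k'$ in $\widetilde I$, satisfying $\gamma_{i'k'}\in\widetilde C_{i'j'k'\ell'}$ for all $j',\ell'\in\widetilde I$ with $i'<j'<k'<\ell'$.

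Finally, set $I:=\{N+1-x : x\in\widetilde I\}$ and, for each pair $j<\ell$ in $I$, declare $\gamma_{j\ell}$ to be the vertex $\gamma_{(N+1-\ell)(N+1-j)}$, which lies in $\widetilde V_{(N+1-\ell)(N+1-j)}=V_{j\ell}$. For any $i,k\in I$ with $i<j<k<\ell$, the quadruple $(i',j',k',\ell'):=(N+1-\ell,\,N+1-k,\,N+1-j,\,N+1-i)$ belongs to $\widetilde I$ in increasing order, and therefore $\gamma_{j\ell}=\gamma_{i'k'}\in \widetilde C_{i'j'k'\ell'}=C_{ijk\ell}$, as required; the induced subhypergraph $H'$ of $H$ on index set $I$ plays the role demanded by the lemma. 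There is no genuine obstacle in this proof: the only thing one needs to be careful about is the bookkeeping of the reversal, verifying that the pair $(j,\ell)$ hosting the chosen vertex in Lemma~\ref{lm:extendKST1rev} corresponds, after the renaming, exactly to the pair $(i',k')$ hosting the chosen vertex in Lemma~\ref{lm:extendKST1}, which is why analogous reductions earlier in this section were dispatched in a single sentence.
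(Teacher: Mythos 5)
Your proposal is correct and matches the paper's approach exactly: the paper (modulo an evident typographical slip, as it cites Lemma~\ref{lm:extendKST1rev} when it means Lemma~\ref{lm:extendKST1}) derives this lemma precisely by reversing the ordering of the index set, and your relabeling $a\mapsto N+1-a$ together with the bookkeeping that the pair $(j,\ell)$ passes to the pair $(i',k')$ is the content of that one-line reduction spelled out in full.
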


\section{Structural results}
\label{sec:specific}

In this section, we present lemmas that
guarantee the existence of various structures in partitioned hypergraphs with positive density and
in particular with density larger than $4/27$.

\subsection{Top vertices with large degree}
\label{subsec:maxdegree}

The first lemma is designed to select universal top vertices with degree close to the maximum degree of a top vertex.

\begin{lemma}
\label{lm:gamma}
For every $\varepsilon>0$ and $n\in\NN$,
there exists $N\in\NN$ such that
every $N$-partitioned hypergraph $H$ with density $d$
has an $n$-partitioned subhypergraph $H_0$ of $H$ with density at least $d-\varepsilon$ with the following property:
there exist vertices $\gamma_{ik}\in V_{ik}$ such that for every $j$, $i<j<k$,
the vertex $\gamma_{ik}$ is a top vertex of the $(i,j,k)$-triad, and
the degree of each top vertex of the $(i,j,k)$-triad is at most that of $\gamma_{ik}$.
\end{lemma}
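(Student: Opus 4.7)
The plan is to find a subhypergraph where, for each pair $i<k$ in the index set, a single top vertex $\gamma_{ik}$ can be chosen that has nearly maximum top-degree in every $(i,j,k)$-triad, and then to remove a small fraction of edges from each triad to promote $\gamma_{ik}$ to the actual maximum. Set $\delta:=\varepsilon/2$. For each triple $i<j<k$ in $[N]$, let $t^{*}_{ijk}$ be the largest multiple of $\delta$ in $[0,1]$ for which
\[T_{ijk}:=\bigl\{v\in V_{ik}:d_{ik\to j}(v)\ge t^{*}_{ijk}\bigr\}\]
satisfies $|T_{ijk}|\ge\delta|V_{ik}|$; such $t^{*}_{ijk}$ exists (the condition is trivial at $0$), and by maximality fewer than $\delta|V_{ik}|$ vertices $v\in V_{ik}$ satisfy $d_{ik\to j}(v)\ge t^{*}_{ijk}+\delta$.

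I would first apply Lemma~\ref{lm:extend0} with parameter $\delta$ and target $n$ to obtain $N$. Given an $N$-partitioned hypergraph $H$ of density $d$, feeding the sets $T_{ijk}$ into Lemma~\ref{lm:extend0} produces an index set $I\subseteq[N]$ of size $n$, the induced subhypergraph $H'$ on $I$, and vertices $\gamma_{ik}\in T_{ijk}$ for all $i<j<k$ in $I$. I then construct $H_{0}$ by editing $H'$ triad-by-triad: in each $(i,j,k)$-triad with $i,j,k\in I$ and $i<j<k$, for every top vertex $v$ with $d_{ik\to j}(v)>d_{ik\to j}(\gamma_{ik})$ I remove exactly $\bigl(d_{ik\to j}(v)-d_{ik\to j}(\gamma_{ik})\bigr)\cdot|V_{ij}||V_{jk}|$ edges of the triad through $v$. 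Since $\gamma_{ik}$ and $v$ both lie in $V_{ik}$, no edge of the triad meets both, so $d_{ik\to j}(\gamma_{ik})$ is preserved while every other top-degree is driven down to at most $d_{ik\to j}(\gamma_{ik})$. Distinct triads share no edges, so these modifications are independent.

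To bound the per-triad density loss, write $u:=d_{ik\to j}(\gamma_{ik})\ge t^{*}_{ijk}$ and split the vertices $v\in V_{ik}$ with $d_{ik\to j}(v)>u$: those with $d_{ik\to j}(v)\ge t^{*}_{ijk}+\delta$ number fewer than $\delta|V_{ik}|$ and contribute excess at most $1$ each, while the rest lie in $(u,t^{*}_{ijk}+\delta)$ and contribute excess less than $\delta$ each over at most $|V_{ik}|$ vertices. The total excess is thus less than $2\delta|V_{ik}|=\varepsilon|V_{ik}|$, so fewer than $\varepsilon|V_{ij}||V_{ik}||V_{jk}|$ edges are removed from the triad, and the density of each triad drops by less than $\varepsilon$. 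Consequently $H_{0}$ has density at least $d-\varepsilon$, as required.

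The main obstacle I anticipate is choosing the candidate sets $T_{ijk}$ correctly: the set of exact maximizers can be a single vertex and is thus inaccessible to Lemma~\ref{lm:extend0}, yet any candidate must lie within roughly $\delta$ of the true maximum or else the subsequent edge removal will cost too much density. The quantile-based definition above couples these two requirements through the single parameter $\delta$.
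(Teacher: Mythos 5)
Your proposal is correct and follows essentially the same approach as the paper: both invoke Lemma~\ref{lm:extend0} with a $\Theta(\varepsilon)$-size candidate set of near-maximal top vertices in each triad, then remove a small fraction of edges to promote the chosen vertex to a genuine maximum. The paper takes the candidate set to be simply the top $\lceil\varepsilon|V_{ik}|\rceil$ vertices by degree and deletes \emph{all} edges through the losing candidates, whereas you define a $\delta$-quantile threshold set and trim only the excess edges above $\gamma_{ik}$'s degree; these are cosmetic variants of the same idea, and your quantitative accounting (total excess $< 2\delta|V_{ik}| = \varepsilon|V_{ik}|$, hence density loss $< \varepsilon$) is sound.
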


\begin{proof}
We set $N$ to be the value from Lemma~\ref{lm:extend0} applied for $n$ and $\delta=\varepsilon$.
Let $H$ be an $N$-partitioned hypergraph with density $d$.
For $1\le i<j<k\le N$, define $C_{ijk}$ to be the set containing
the $\left\lceil\varepsilon V_{ik}\right\rceil$ vertices of $V_{ik}$ with largest degree in the $(i,j,k)$-triad.
We now apply Lemma~\ref{lm:extend0} with the sets $C_{ijk}$
to get an induced $n$-partitioned subhypergraph $H'$ of $H$ with index set $I$ and
vertices $\gamma_{ik}\in V_{ik}$, $i,k\in I$, such that $\gamma_{ik}\in C_{ijk}$ for every $i<j<k$, $j\in I$.
Next remove from each $(i,j,k)$-triad of $H'$ all edges containing a vertex of $C_{ijk}$ different from $\gamma_{ik}$, and
let $H_0$ be the resulting $n$-partitioned subhypergraph of $H'$ and so of $H$.
Observe that at most $\varepsilon |V_{ij}|\cdot |V_{jk}|\cdot |V_{ik}|$ edges can be removed from the $(i,j,k)$-triad
as $|C_{ijk}\setminus\{\gamma_{ik}\}|\le\varepsilon |V_{ik}|$,
which implies that the density of $H_0$ is at least $d-\varepsilon$.
It follows that the $n$-partitioned hypergraph $H_0$ and the vertices $\gamma_{ik}$ have the properties given in the lemma.
\end{proof}

The next lemma is designed to identify left and right neighbors of selected vertices that
have the maximum degree in corresponding triads.

\begin{lemma}
\label{lm:gamma12}
For every $\varepsilon>0$ and $n\in\NN$,
there exist $N\in\NN$ such that
the following holds for every $N$-partitioned hypergraph $H$ with density $d$ and
every choice of vertices $\gamma_{ik}\in V_{ik}$ with degree at least $\varepsilon$ in each $(i,j,k)$-triad with $i<j<k$.

There exists an $n$-partitioned subhypergraph $H_0$ with index set $I\subseteq [N]$ that has density at least $d-\varepsilon$ and
there exist $\alpha^*_{ij},\beta^*_{ij},\gamma^1_{ij},\gamma^2_{ij}\in V_{ij}$, $i<j$, $i,j\in I$, such that
the following holds for all $i<j<k$, $i,j,k\in I$:
\begin{itemize}
\item the degree of $\gamma_{ik}$ in the $(i,j,k)$-triad in $H_0$ is smaller by at most $\varepsilon$ compared to $H$,
\item the vertices $\gamma^1_{ij}$, $\beta^*_{jk}$ and $\gamma_{ik}$ form an edge in the $(i,j,k)$-triad in $H_0$,
\item the vertices $\alpha^*_{ij}$, $\gamma^2_{jk}$ and $\gamma_{ik}$ form an edge in the $(i,j,k)$-triad in $H_0$,
\item for every $\ell\in I$, $i<\ell<j$,
      among left neighbors of $\gamma_{ik}$ in the $(i,j,k)$-triad, $\gamma^1_{ij}$ has the largest degree in the $(i,\ell,j)$-triad of $H_0$,
\item for every $\ell\in I$, $j<\ell<k$,
      among right neighbors of $\gamma_{ik}$ in the $(i,j,k)$-triad, $\gamma^2_{jk}$ has the largest degree in the $(j,\ell,k)$-triad of $H_0$.
\end{itemize}
\end{lemma}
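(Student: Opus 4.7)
The plan is to produce the four families of vertices by first identifying, inside each part $V_{ij}$, two small sets of candidates capturing the twin requirements on $\gamma^1_{ij}$ (a left-neighbour witness of every $\gamma_{ik}$ that maximises the $(i,\ell,j)$-degree) and on $\gamma^2_{jk}$ (the right-neighbour analogue), applying Lemma~\ref{lm:intersect} to extract common witnesses, and then performing a small amount of edge removal to enforce the maximum-degree clauses before invoking Lemmas~\ref{lm:extend0a} and \ref{lm:extend0b} to locate $\alpha^*_{ij}$ and $\beta^*_{jk}$. The overall skeleton parallels the proof of Lemma~\ref{lm:gamma} but uses a two-layered candidate construction because the maximum-degree statement now mixes a condition on one triad (the $(i,j,k)$-triad in which $v$ is a left neighbour of $\gamma_{ik}$) with a degree measurement in a different triad (the $(i,\ell,j)$-triad).

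\textbf{Candidates and intersection.} For each triple $i<j<k$, set $L^*_{ijk}=\{v\in V_{ij}:d_{ij|ik}(v,\gamma_{ik})\ge\varepsilon/3\}$ and $R^*_{ijk}=\{v\in V_{jk}:d_{jk|ik}(v,\gamma_{ik})\ge\varepsilon/3\}$, which have at least $2\varepsilon|V|/3$ vertices by averaging from the hypothesis $d_{ik\to j}(\gamma_{ik})\ge\varepsilon$. For each quadruple $i<\ell<j<k$ let $T^1_{i\ell jk}\subseteq L^*_{ijk}$ consist of the $\lceil\delta|V_{ij}|\rceil$ vertices with the largest $d^H_{ij\to\ell}$, and analogously define $T^2_{ij\ell k}\subseteq R^*_{ijk}$ for $i<j<\ell<k$; here $\delta$ is chosen a small multiple of $\varepsilon/n$. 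After padding with a dummy index each (in the same manner as Lemmas~\ref{lm:extend0a}--\ref{lm:extendKST1rev}), two successive applications of Lemma~\ref{lm:intersect} produce an index set $I\subseteq[N]$ of the required size together with vertices $\gamma^1_{ij}\in\bigcap_{\ell,k\in I,\,i<\ell<j<k}T^1_{i\ell jk}$ and $\gamma^2_{jk}\in\bigcap_{i,\ell\in I,\,i<j<\ell<k}T^2_{ij\ell k}$.

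\textbf{Edge removal, degree control and finishing.} Construct $H_0$ from the induced subhypergraph on $I$ by making two kinds of removals. Inside every $(i,j,k)$-triad delete every edge through $\gamma_{ik}$ whose left vertex lies outside $L^*_{ijk}$ or whose right vertex lies outside $R^*_{ijk}$, and additionally delete the few edges through $\gamma_{ik}$ that contain $\gamma^2_{ij}$ or $\gamma^1_{jk}$. Inside every $(p,\ell,q)$-triad with $p<\ell<q$ delete all edges at top vertices belonging to $\bigcup_{k\in I,k>q}T^1_{p\ell qk}\cup\bigcup_{m\in I,m<p}T^2_{mp\ell q}$ except those at $\gamma^1_{pq}$ or $\gamma^2_{pq}$. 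The first removal costs at most $2\varepsilon/3$ in the density and in $d_{ik\to j}(\gamma_{ik})$ of each $(i,j,k)$-triad; the second removal costs at most $2n\delta$ in each $(p,\ell,q)$-triad, which is $\le\varepsilon/3$ by the choice of $\delta$. Thus $H_0$ has density $\ge d-\varepsilon$ and $\gamma_{ik}$'s degree drops by at most $\varepsilon$. By construction every left neighbour of $\gamma_{ik}$ in $H_0$ lies in $L^*_{ijk}\setminus\{\gamma^2_{ij}\}$, so the maximum-degree clause for $\gamma^1_{ij}$ follows from the definition of $T^1$ and from the fact that $\gamma^1_{ij}$'s edges in the $(i,\ell,j)$-triad are never touched; the argument for $\gamma^2_{jk}$ is symmetric. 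To finish, apply Lemma~\ref{lm:extend0b} to the codegree sets $B_{ijk}=\{\beta\in V_{jk}:\{\gamma^1_{ij},\beta,\gamma_{ik}\}\in E(H_0)\}$ to obtain $\beta^*_{jk}$, and Lemma~\ref{lm:extend0a} to the analogous $A_{ijk}$ to obtain $\alpha^*_{ij}$, possibly shrinking $I$ once more.

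\textbf{Main obstacle.} The most delicate point is the interaction of the $\gamma^1$- and $\gamma^2$-operations, which act on the very same $(p,\ell,q)$-triads and both affect top vertices lying in $V_{pq}$: one must simultaneously exempt both $\gamma^1_{pq}$ and $\gamma^2_{pq}$ from the second-type removal so that each retains its $(p,\ell,q)$-edges, and ensure that neither witness appears as a competing left- or right-neighbour for the other's maximum-degree claim. Both issues are resolved by the supplementary removal of the pairs $\{\gamma_{ik},\gamma^2_{ij}\}$ and $\{\gamma_{ik},\gamma^1_{jk}\}$ in each $(i,j,k)$-triad, which excises each witness from the opposite candidate pool at negligible density cost. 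A secondary technical point is that, after the first removal, the codegree of $\gamma^1_{ij}$ with $\gamma_{ik}$ must be witnessed inside $R^*_{ijk}$; this is handled by an additional refinement of $L^*_{ijk}$ to those vertices having many common neighbours with $\gamma_{ik}$ inside $R^*_{ijk}$, a set which standard averaging shows still has size $\Omega(\varepsilon^2|V_{ij}|)$ and so is compatible with the choice of $\delta$.
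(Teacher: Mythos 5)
Your high-level plan is reasonable, but the second removal step has a genuine gap that the paper's Ramsey colorings are specifically designed to avoid.

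Your removal inside each $(p,\ell,q)$-triad deletes \emph{all} edges at top vertices in $\bigcup_{k}T^1_{p\ell qk}\cup\bigcup_{m}T^2_{mp\ell q}$, exempting only $\gamma^1_{pq}$ and $\gamma^2_{pq}$. But $\gamma_{pq}$ itself may well lie in one of these $T$-sets: $T^1_{p\ell qk}$ consists of the top-$\lceil\delta|V_{pq}|\rceil$ vertices of $L^*_{pqk}$ by $(p,\ell,q)$-degree, and nothing prevents $\gamma_{pq}$ from being a high-codegree left neighbour of $\gamma_{pk}$ with a large $(p,\ell,q)$-degree. If $\gamma_{pq}\in T^1_{p\ell qk}\setminus\{\gamma^1_{pq},\gamma^2_{pq}\}$, your removal zeroes out \emph{all} of $\gamma_{pq}$'s edges as a top vertex in the $(p,\ell,q)$-triad. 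This is a per-vertex catastrophe that your density accounting ($2n\delta \le \varepsilon/3$ per triad) does not control, and it directly violates the first bullet of the lemma, which demands that $\gamma_{ik}$'s degree in \emph{every} triad drop by at most $\varepsilon$. A related secondary gap appears when $\gamma^1_{ij}=\gamma_{ij}$: then your cleaning step and the supplementary removal do touch $\gamma^1_{ij}$'s top-degree in the $(i,\ell,j)$-triad, so a competitor $w\notin T^1_{i\ell jk}$ whose $H$-degree equals $\gamma^1_{ij}$'s (but whose top-degree is untouched) can overtake $\gamma^1_{ij}$ in $H_0$, breaking the max-degree claim.

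The paper handles exactly this interaction with auxiliary edge-colourings on $4$-tuples. For $\gamma^1$, a $2$-colouring tests whether $\gamma_{ik}$ lies in the candidate set $C_{ijk\ell}$; Ramsey makes this uniform over a large index set, and then in the "red" branch one \emph{declares} $\gamma^1_{ik}=\gamma_{ik}$ (so no conflict), whereas in the "blue" branch $\gamma_{ik}\notin C_{ijk\ell}$ so its edges are never touched by the degree-trimming removal. For $\gamma^2$, the colouring is $3$-way to track membership of both $\gamma_{j\ell}$ and $\gamma^1_{j\ell}$, with a tie-break by degree, so that the $\gamma^2$-trimming never removes edges at $\gamma_{j\ell}$ or $\gamma^1_{j\ell}$. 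Moreover the paper only removes edges at top vertices whose degree is \emph{strictly larger} than the chosen $\gamma^1_{ik}$'s (or $\gamma^2_{j\ell}$'s), evaluated in the current state, so the chosen vertex's degree is never altered by its own trimming step and the maximum condition is exact. Without an analogous case analysis your construction cannot simultaneously guarantee the density preservation, the per-triad degree preservation of $\gamma_{ik}$, and the precise maximum-degree clauses for $\gamma^1$ and $\gamma^2$.
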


\begin{proof}
We determine $N$ as follows:
first apply Lemma~\ref{lm:extend0b} with $n$ and $\varepsilon/6$ to get $N_1$,
then Lemma~\ref{lm:extend0a} with $N_1$ and $\varepsilon/6$ to get $N_2$, and
finally Lemma~\ref{lm:extendKST1rev} with $N_2$ and $\varepsilon/(3N_2)$ to get $N_3$.
We then set $N_4=R_3^4(N_3)$ and apply Lemma~\ref{lm:extendKST1} with $N_4$ and $\varepsilon/(3N_4)$ to get $N_5$, and
finally set $N=R_2^4(N_5)$.

Let $H$ be an $N$-partitioned hypergraph and
let $\gamma_{ik}\in V_{ik}$ be vertices with properties described in the statement of the lemma.
From every $(i,j,k)$-triad of $H$, $1\le i<j<k\le N$,
remove all edges containing the vertex $\gamma_{ik}$ and a vertex $w\in V_{ij}$ such that $d_{ij|ik}(w,\gamma_{ik})<\varepsilon/6$ and
all edges containing the vertex $\gamma_{ik}$ and a vertex $w\in V_{jk}$ such that $d_{jk|ik}(w,\gamma_{ik})<\varepsilon/6$ (we keep on removing edges as long as there exists a vertex $w$ in $V_{ij}$ or $V_{jk}$ with given codegree properties).
Let $H'$ be the resulting $N$-partitioned subhypergraph.
Since at most $2\varepsilon |V_{ij}|\cdot |V_{jk}|/6$ edges are removed from an $(i,j,k)$-triad,
it follows that the degree of $\gamma_{ik}$ in an $(i,j,k)$-triad in $H'$ is at least $\varepsilon-2\varepsilon/6=2\varepsilon/3$;
in particular, the density of $H'$ is at least $d-\varepsilon/3$.
Since the degree of $\gamma_{ik}$ in an $(i,j,k)$-triad is at least $2\varepsilon/3$,
the vertex $\gamma_{ik}$ has at least $2\varepsilon |V_{ij}|/3$ left neighbors in the $(i,j,k)$-triad and
at least $2\varepsilon |V_{jk}|/3$ right neighbors.
Note that if $w$ is a left neighbor of $\gamma_{ik}$ in the $(i,j,k)$-triad of $H'$, then $d_{ij|ik}(w,\gamma_{ik})\ge\varepsilon/6$, and
if $w$ is a right neighbor of $\gamma_{ik}$ in the $(i,j,k)$-triad, then $d_{jk|ik}(w,\gamma_{ik})\ge\varepsilon/6$.

For $1\le i<j<k<\ell\le N$,
define $C_{ijk\ell}$ to be the set of the $\left\lceil\varepsilon |V_{ik}|/(3N_4)\right\rceil$ left neighbors of $\gamma_{i\ell}$ (in the $(i,k,\ell)$-triad) with largest degree in the $(i,j,k)$-triad.
We next define an auxiliary $2$-edge-colored $4$-uniform complete hypergraph $G$ with vertex set $[N]$ such that
an edge formed by $i<j<k<\ell$ is colored red if $\gamma_{ik}\in C_{ijk\ell}$, and it is colored blue otherwise.
By Ramsey's Theorem, there exists an $N_5$-element set $I_5\subseteq [N]$ such that
all quadruples formed by the elements of $I_5$ have the same color.

If the common color of the quadruples of $I_5$ is red,
we choose $I_4$ to be any $N_4$-element subset of $I_5$,
we set $H_4$ to be the $N_4$-partitioned subhypergraph of $H'$ induced by $I_4$ and
we set $\gamma^1_{ik}=\gamma_{ik}$ for $i,k\in I_4$;
note that $\gamma_{ik}\in C_{ijk\ell}$ for all $i<j<k<\ell$, $i,j,k,\ell\in I_4$.
If the common color of the quadruples of $I_5$ is blue,
we apply Lemma~\ref{lm:extendKST1} with the $N_5$-partitioned subhypergraph of $H'$ induced by $I_5$ and $C_{ijk\ell}$
to get an $N_4$-partitioned induced subhypergraph $H_4$ of $H'$ with index set $I_4\subseteq I_5$ and $\gamma^1_{ik}$ such that
$\gamma^1_{ik}\in C_{ijk\ell}$ for all $i<j<k<\ell$, $i,j,k,\ell\in I_4$.
Note that $\gamma_{ik}\not\in C_{ijk\ell}$ for all $i<j<k<\ell$, $i,j,k,\ell\in I_4$ if the common color is blue.

For each triad $(i,j,k)$, with $i,j,k \in I_4$ we will now remove some edges in order to ensure the fourth condition of the lemma is satisfied for our choice of $\gamma^1_{ij}$.  We remove all edges from the $(i,j,k)$-triad that contain a vertex $w\in C_{ijk\ell}$ with degree larger than that of $\gamma^1_{ik}$ for some $\ell>k$, $\ell\in I_4$.
As there are at most $N_4$ choices of $\ell$, the density of the $(i,j,k)$-triad decreases by at most $\varepsilon/3$. 
Observe also that we do not remove any edges of the $(i,j,k)$-triad containing $\gamma_{ik}$ 
since either $\gamma^1_{ik}=\gamma_{ik}$ or $\gamma_{ik}\notin C_{ijk\ell}$ for any $\ell>k$, $\ell\in I_4$.
Let $H'_4$ be the $N_4$-partitioned hypergraph that
is obtained from $H_4$ by performing this removal for each triad indexed by $I_4$.
Observe that since we preserve all edges having $\gamma_{ik}$ as a top vertex, with $i,k \in I_4$,
it holds for any any $i<j<k<\ell$ that
the vertex $\gamma^1_{ik}$ has maximum degree in the $(i,j,k)$-triad of $H'_4$
among all left neighbors of $\gamma_{i\ell}$ (in the $(i,k,\ell)$-triad).
Also observe that the density of $H_4'$ is at least $d-2\varepsilon/3$.

We now repeat the process to find vertices $\gamma^2_{ik}$.
We start by defining sets $C_{ijk\ell}\subseteq V_{j\ell}$
to contain the $\left\lceil\varepsilon |V_{j\ell}|/(3N_2)\right\rceil$ right neighbors of $\gamma_{i\ell}$ (in the $(i,j,\ell)$-triad) with largest degree in the $(j,k,\ell)$-triad, and
an auxiliary $3$-edge-colored $4$-uniform complete hypergraph $G'$ with $N_4$ vertices corresponding to the parts of $H'_4$ such that
an edge formed by $i<j<k<\ell$ is colored red if $\gamma_{j\ell}\in C_{ijk\ell}$,
orange if $\gamma^1_{j\ell}\in C_{ijk\ell}$, and blue otherwise;
in case that $C_{ijk\ell}$ contains both $\gamma_{j\ell}$ and $\gamma^1_{j\ell}$,
we color the edge red if the degree of $\gamma_{j\ell}$ in the $(j,k,\ell)$-triad is at least the degree of $\gamma^1_{j\ell}$, and
we color the edge orange if the degree of $\gamma^1_{j\ell}$ is larger.
By Ramsey's Theorem, there exists an $N_3$-element set $I_3\subseteq I_4$ such that
all quadruples formed by the elements of $I_3$ have the same color.

If the common color of the quadruples of $I_3$ is red or orange,
we set $I_2$ to be any $N_2$-element subset of $I_3$ and
$H'_2$ to be the $N_2$-partitioned subhypergraph of $H'_4$ induced by $I_2$.
For all $i,k\in I_2$,
we set $\gamma^2_{ik}$ to $\gamma_{ik}$ if the color is red and
to $\gamma^1_{ik}$ if the color is orange.
If the common color of the quadruples of $I_3$ is blue,
we apply Lemma~\ref{lm:extendKST1rev} with the $N_3$-partitioned subhypergraph of $H'_4$ induced by $I_3$ and $C_{ijk\ell}$
to get an $N_2$-partitioned induced subhypergraph $H'_2$ of $H'_4$ with index set $I_2\subseteq I_3$ and $\gamma^2_{j\ell}$ such that
$\gamma^2_{j\ell}\in C_{ijk\ell}$ for all $i<j<k<\ell$, $i,j,k,\ell\in I$.

We now define the $N_2$-partitioned hypergraph $H_2$.
The hypergraph $H_2$ is obtained from $H'_2$ by removing from each $(j,k,\ell)$-triad all edges that
contain a vertex $w\in C_{ijk\ell}$ with degree larger than that of $\gamma^2_{j\ell}$ for some $i<j$, $i\in I_2$.
Since there are at most $N_2$ choices of an index $i$, the density of the $(j,k,\ell)$-triad decreases by at most $\varepsilon/3$ and is in particular at least $d-\varepsilon$.
Moreover, this operation does not affect edges containing $\gamma_{j\ell}$ or $\gamma^1_{j\ell}$
by the construction of the edge-coloring of $G'$ and the choice of $\gamma^2_{j\ell}$.
Observe that all edges containing $\gamma_{j\ell}$ or $\gamma^1_{j\ell}$ as top vertices are preserved by this procedure as
if the degree of $\gamma^2_{j\ell}$ in the $(j,k,\ell)$-triad is smaller than that of $\gamma_{j\ell}$,
then $\gamma_{j\ell}\not\in C_{ijk\ell}$ for any $i<j$, $i\in I_2$, and
similarly, if the degree of $\gamma^2_{j\ell}$ in the $(j,k,\ell)$-triad is smaller than that of $\gamma^1_{j\ell}$,
then $\gamma^1_{j\ell}\not\in C_{ijk\ell}$ for any $i<j$, $i\in I_2$. This means we preserve the fourth condition and establish the fifth one.

Next observe that for every $i<j<k$, $i,j,k\in I_2$,
the edges of the $(i,j,k)$-triad containing the vertex $\gamma_{ij}$ are the same in $H_2$ and in $H'$;
in particular,
it holds that $d_{ij|ik}(\gamma^1_{ij},\gamma_{ik})\ge\varepsilon/6$ and $d_{jk|ik}(\gamma^2_{jk},\gamma_{ik})\ge\varepsilon/6$.
For $i<j<k$, $i,j,k\in I_2$, define $A_{ijk}$ to be the set of vertices in $V_{ij}$ that
form an edge together with $\gamma^2_{jk}$ and $\gamma_{ik}$ in the $(i,j,k)$-triad, and
$B_{ijk}$ to be the set of vertices in $V_{jk}$ that
form an edge together with $\gamma^1_{ij}$ and $\gamma_{ik}$.
By Lemma~\ref{lm:extend0a} applied with the $N_2$-partitioned hypergraph $H_2$ and the sets $A_{ijk}$,
there exist an $N_1$-partitioned subhypergraph $H_1$ of $H_2$ induced by $I_1\subseteq I_2$ and
vertices $\alpha^*_{ij}$, $i<j$, $i,j\in I_1$, such that
$\{\alpha^*_{ij},\gamma^2_{jk},\gamma_{ik}\}$ is an edge for all $i<j<k$, $i,j,k\in I_1$.
By Lemma~\ref{lm:extend0b} applied with the $N_1$-partitioned hypergraph $H_1$ and the sets $B_{ijk}$,
there exist an $n$-partitioned subhypergraph $H_0$ of $H_1$ induced by $I\subseteq I_1$ and
vertices $\beta^*_{jk}$, $j<k$, $j,k\in I$, such that
$\{\gamma^1_{ij},\beta^*_{jk},\gamma_{ik}\}$ is an edge for all $i<j<k$, $i,j,k\in I$.

We conclude that $H_0$ is an $n$-partitioned subhypergraph of $H$ with density at least $d-\varepsilon$,
the degree of $\gamma_{ik}$ in an $(i,j,k)$-triad has dropped by at most $\varepsilon/3$, and
$H_0$ together with the vertices $\alpha^*_{ij}$, $\beta^*_{ij}$, $\gamma^1_{ij}$ and $\gamma^2_{ij}$, $i<j$, $i,j\in I$,
satisfy all the conditions of the lemma.
\end{proof}

\subsection{Neighbors of selected vertices}
\label{subsec:AB}

In this subsection we establish several lemmas concerning partitioned hypergraphs such that each of the lemmas have two outcomes:
either we find a small collection of particularly well-structured edges, which will ultimately allow us to embed a tight cycle, or
we pass to a subhypergraph by removing a small number of edges in such a way that
the resulting subhypergraph does not contain certain subconfigurations.
In the following subsection,
we deduce that if neither of the lemmas provides the structure for embedding a tight cycle,
the final subhypergraph cannot have density larger than $4/27-\varepsilon$;
this would contradict that the density of the original partitioned hypergraph was at least $4/27$.

We begin with an auxiliary lemma that is used to prove the lemmas given later in the subsection.

\begin{lemma}
\label{lm:Acover}
For every $\varepsilon>0$ and $n\in\NN$,
there exist $N\in\NN$ and $m\le 2/\varepsilon$ such that
the following holds for every $N$-partitioned hypergraph $H$ and
every choice of vertices $\gamma_{ik}\in V_{ik}$, $i,k\in [N]$.
There exists an $n$-partitioned subhypergraph $H_0$ with index set $I\subseteq [N]$ and
there exist (not necessarily distinct) vertices $\alpha_{ij}^1,\ldots,\alpha_{ij}^m\in V_{ij}$, $i<j$, $i,j\in I$, such that
the following holds in every $(i,j,k)$-triad with $i<j<k$, $i,j,k\in I$:
\begin{itemize}
\item the degree of $\gamma_{ik}$ in $H_0$
      is smaller by at most $\varepsilon$ compared to its degree in $H$,
\item the degree of any vertex of $V_{ik}$ different from $\gamma_{ik}$ in $H_0$ is the same as in $H$, and
\item for every right neighbor $\beta$ of $\gamma_{ik}$, there exists $\ell\in [m]$ such that
      $\{\alpha_{ij}^\ell,\beta,\gamma_{ik}\}$ is an edge.
\end{itemize}
\end{lemma}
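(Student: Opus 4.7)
The plan is to construct the vertices $\alpha_{ij}^1,\ldots,\alpha_{ij}^m$ greedily, one at a time, using a set-cover argument in each triad combined with Lemma~\ref{lm:extend0a} to force the chosen vertex to depend only on the pair $(i,j)$ rather than on the triad $(i,j,k)$. I will set $m=\lceil 2/\varepsilon\rceil$ and $\delta=\varepsilon/2$. Throughout the process I maintain an index set $I_{t-1}$, the already chosen vertices $\alpha_{ij}^1,\ldots,\alpha_{ij}^{t-1}$ for $i<j$ in $I_{t-1}$, and for each triad $(i,j,k)$ with $i,j,k\in I_{t-1}$ the set $B_{ijk}^t\subseteq V_{jk}$ of right neighbors of $\gamma_{ik}$ that are not yet covered, meaning those $\beta$ for which none of the triples $\{\alpha_{ij}^s,\beta,\gamma_{ik}\}$, $s<t$, is an edge.

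At iteration $t$, I define $A_{ijk}^t\subseteq V_{ij}$ to be the set of $\alpha$ for which at least $\varepsilon|V_{jk}|/2$ vertices of $B_{ijk}^t$ form an edge with $\alpha$ and $\gamma_{ik}$. I color each triad $(i,j,k)$ red if $|A_{ijk}^t|\ge\delta|V_{ij}|$ and blue otherwise, and apply Ramsey's theorem to pass to a monochromatic subindex set. In the red case, Lemma~\ref{lm:extend0a} applied with the sets $A_{ijk}^t$ yields a further subindex set $I_t$ and a vertex $\alpha_{ij}^t$ lying in every $A_{ijk}^t$; I update $B_{ijk}^{t+1}$ accordingly and iterate. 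In the blue case I stop and delete, from each triad indexed by $I_{t-1}$, every edge containing $\gamma_{ik}$ and a right vertex in $B_{ijk}^t$. A splitting argument based on whether the left vertex lies in $A_{ijk}^t$ bounds the number of deleted edges in the $(i,j,k)$-triad by $\delta|V_{ij}||V_{jk}|+|V_{ij}|\cdot\varepsilon|V_{jk}|/2=\varepsilon|V_{ij}||V_{jk}|$, so the degree of $\gamma_{ik}$ drops by at most $\varepsilon$ while no other top vertex of $V_{ik}$ is affected since the only deleted edges contain $\gamma_{ik}$.

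Since in each red iteration the set $B_{ijk}^t$ shrinks by at least $\varepsilon|V_{jk}|/2$, after at most $m$ red iterations every right neighbor of $\gamma_{ik}$ is covered. Padding the sequence with arbitrary duplicates if fewer than $m$ choices are actually made yields $\alpha_{ij}^1,\ldots,\alpha_{ij}^m$, and the three conditions then hold: the first by the degree-drop bound above, the second because only edges containing $\gamma_{ik}$ are ever removed, and the third because every right neighbor of $\gamma_{ik}$ surviving in $H_0$ is by construction covered by some $\alpha_{ij}^s$. The main technical obstacle is purely quantitative: each of the at most $m+1=O(1/\varepsilon)$ iterations consists of a Ramsey reduction followed by an application of Lemma~\ref{lm:extend0a}, each shrinking the index set by an amount depending on $n$ and $\delta$, so the initial $N$ must be obtained by starting from the desired final size $n$ and tracing these dependences backwards through all iterations.
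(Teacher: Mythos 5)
Your proof follows essentially the same iterative cover-and-prune strategy as the paper: at each round you color triads by whether the covering set $A_{ijk}$ is large, use Ramsey's Theorem to pass to a monochromatic index set, extract a common covering vertex $\alpha_{ij}^t$ via Lemma~\ref{lm:extend0a} in the large case, and in the small case delete edges through $\gamma_{ik}$ and uncovered right vertices, splitting the loss into $\delta|V_{ij}||V_{jk}|$ for the left vertices in $A_{ijk}$ and $\varepsilon|V_{jk}|/2\cdot|V_{ij}|$ for those outside. The one thing to correct is your choice $m=\lceil 2/\varepsilon\rceil$, which may exceed the stated bound $m\le 2/\varepsilon$; take $m=\lfloor 2/\varepsilon\rfloor$ instead (as in the paper) and observe that after $m$ covering rounds the uncovered set has size below $\varepsilon|V_{jk}|/2$, forcing $A_{ijk}=\emptyset$ and hence the stopping case.
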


\begin{proof}
Set $m=\lfloor 2/\varepsilon\rfloor$, and define iteratively $n_0,\ldots,n_m$ as follows.
We set $n_0=n$ and for $\ell\in [m]$,
define $N_\ell$ to be the value of $N$ obtained by applying Lemma~\ref{lm:extend0a} with $n_{\ell-1}$ and $\varepsilon/2$ and
set $n_\ell=R_2^3(N_\ell)$.
We will show that the statement of the lemma holds with $N=n_m$.

Let $H$ be an $N$-partitioned hypergraph and
let $\gamma_{ik}\in V_{ik}$ be vertices as described in the statement of the lemma.
For $\ell=m,\ldots,0$,
we will iteratively define $n_{\ell}$-partitioned subhypergraphs $H_{\ell}$ with index set $I_\ell$ and
vertices $\alpha_{ij}^{\ell+1},\ldots,\alpha_{ij}^m\in V_{ij}$, $i<j$, $i,j\in I_{\ell}$, such that
for every $i<j<k$, $i,j,k,\in I_{\ell}$,
there are at least $\varepsilon(m-\ell)|V_{jk}|/2$ vertices of $V_{jk}$ that form an edge with with the vertex $\gamma_{ik}$ and
one of the vertices $\alpha_{ij}^{\ell+1},\ldots,\alpha_{ij}^m$ in the $(i,j,k)$-triad of $H_{\ell}$.

At the beginning of the process, i.e., for $\ell=m$, we set $H_m=H$ and $I_m=[N]$.
Suppose that we have already constructed $H_{\ell}$, $I_\ell$ and $\alpha_{ij}^{\ell+1},\ldots,\alpha_{ij}^m$.
For all $i<j<k$, $i,j,k\in I_{\ell}$,
let $B_{ijk}\subseteq V_{jk}$ consist of those vertices of $V_{jk}$ that
do not form an edge together with $\gamma_{ik}$ and any of $\alpha_{ij}^{\ell+1},\ldots,\alpha_{ij}^m$, and
let $A_{ijk} \subseteq V_{ij}$ consist of all vertices of $V_{ij}$ that 
form an edge with $\gamma_{ik}$ and at least $\varepsilon|V_{jk}|/2$ vertices of $B_{ijk}$.
Informally speaking, $B_{ijk}\subseteq V_{jk}$ consist of the vertices of $V_{jk}$ not yet ``covered'' and
$A_{ijk} \subseteq V_{ij}$ consist of all vertices that cover many uncovered vertices.
We next construct an auxiliary $2$-edge-colored $3$-uniform complete hypergraph $G_{\ell}$ with $n_\ell$ vertices $I_\ell$:
an edge $\{i,j,k\}$ of $G_{\ell}$, $i<j<k$, is colored red if $|A_{ijk}|<\varepsilon|V_{ij}|/2$ and blue otherwise.
If $\ell>0$, the choice of $n_\ell$ yields that
there exists an $N_{\ell}$-element subset $I'_{\ell}\subseteq I_\ell$ such that
all triples formed by the elements of $I'_{\ell}$ have the same color.
If $\ell=0$, then all edges of $H_0$ are actually colored red as
at least $(1-\varepsilon/2)|V_{jk}|$ vertices in $V_{jk}$
form an edge with $\gamma_{ik}$ and one of the vertices $\alpha_{ij}^1,\ldots,\alpha_{ij}^m$,
which yields that $A_{ijk}=\emptyset$; so, we set $I'_0=I_0$.

If the common color of the triples of $I'_{\ell}$ is blue, which happens only if $\ell\ge 1$ (as explained above),
we apply Lemma~\ref{lm:extend0a} with the $N_\ell$-partitioned subhypergraph of $H_{\ell}$ induced by $I'_{\ell}$ and sets $A_{ijk}$, $i<j<k$, $i,j,k\in I'_{\ell}$ to obtain
an $n_{\ell-1}$-partitioned induced subhypergraph $H_{\ell-1}$ of $H_{\ell}$ with index set $I_{\ell-1}$ and
$\alpha^{\ell}_{ij}\in A_{ijk}$ for all $i<j<k$, $i,j,k\in I_{\ell-1}$.
Observe that $H_{\ell-1}$, $I_{\ell-1}$ and $\alpha_{ij}^{\ell},\ldots,\alpha_{ij}^m$
satisfy the properties given in the description of the iterative procedure, and
so we may proceed to the next step of the iterative procedure.

If the common color of the triples of $I'_{\ell}$ is red,
we stop the process and
consider the $n$-partitioned hypergraph induced by any $n$-element subset $I$ of $I'_{\ell}$ and
remove from each $(i,j,k)$-triad, $i<j<k$, $i,j,k\in I$,
all edges containing $\gamma_{ik}$ and a vertex from $B_{ijk}$.
We set $H_0$ to be the resulting $n$-partitioned subhypergraph of $H$ and
all $\alpha^1_{ij},\ldots,\alpha^{\ell}_{ij}$ to $\alpha^{\ell+1}_{ij}$ (if $\ell=m$, choose these vertices arbitrarily).

We now argue that $H_0$ and $\alpha^1_{ij},\ldots,\alpha^m_{ij}$ satisfy the conclusion of the lemma.
For any $i<j<k$, $i,j,k\in I\subseteq I'_{\ell}$ (recall that $\ell$ is the iteration number when we stopped the process),
since the triple formed by $i$, $j$ and $k$ is red,
the set $A_{ijk}$ contains at most $\varepsilon|V_{ij}|/2$ vertices.
Because each of the vertices of $A_{ijk}$ is contained in at most $|V_{jk}|$ edges together with the vertex $\gamma_{ik}$,
we have removed at most $\varepsilon|V_{ij}|\cdot|V_{jk}|/2$ edges containing a vertex from $A_{ijk}$.
Since $A_{ijk}$ consists precisely of the vertices in $V_{ij}$ that are contained in at least $\varepsilon |V_{jk}|/2$ removed edges,
at most $\varepsilon|V_{ij}|\cdot|V_{jk}|/2$ edges containing a vertex from $V_{ij} \setminus A_{ijk}$ have been removed.
It follows that the degree of $\gamma_{ik}$ in each triad of $H_0$ is smaller by at most $\varepsilon$ compared to its degree in $H$.
In addition, the definition of $B_{ijk}$ and
the fact that the vertex $\gamma_{ik}$ has no right neighbor in $B_{ijk}$ in $H_0$ imply that
every right neighbor $w$ of $\gamma_{ik}$ in an $(i,j,k)$-triad of $H_0$ forms an edge
with $\gamma_{ik}$ and at least one of the vertices $\alpha^{\ell+1}_{ij},\ldots,\alpha^m_{ij}$.
Hence,
the $n$-partitioned subhypergraph $H_0$ of $H$ and the vertices $\alpha^1_{ij},\ldots,\alpha^m_{ij}$
satisfy the conclusion of the lemma.
\end{proof}

The next lemma is symmetric to Lemma~\ref{lm:Acover} and
can be proven by applying Lemma~\ref{lm:Acover} to a partitioned hypergraph with the order of parts reversed.

\begin{lemma}
\label{lm:Bcover}
For every $\varepsilon>0$ and $n\in\NN$,
there exist $N\in\NN$ and $m\le 2/\varepsilon$ such that
the following holds for every $N$-partitioned hypergraph $H$ and
every choice of vertices $\gamma_{ik}\in V_{ik}$, $i,k\in [N]$.
There exists an $n$-partitioned subhypergraph $H_0$ with index set $I\subseteq [N]$ and
there exist (not necessarily distinct) vertices $\beta_{ij}^1,\ldots,\beta_{ij}^m\in V_{ij}$, $i<j$, $i,j\in I$, such that
the following holds in every $(i,j,k)$-triad with $i<j<k$, $i,j,k\in I$:
\begin{itemize}
\item the degree of $\gamma_{ik}$ in $H_0$
      is smaller by at most $\varepsilon$ compared to its degree in $H$,
\item the degree of any vertex of $V_{ik}$ different from $\gamma_{ik}$ in $H_0$ is the same as in $H$, and
\item for every left neighbor $\alpha$ of $\gamma_{ik}$, there exists $\ell\in [m]$ such that
      $\{\alpha,\beta_{jk}^\ell,\gamma_{ik}\}$ is an edge.
\end{itemize}
\end{lemma}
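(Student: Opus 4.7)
The plan is to reduce Lemma~\ref{lm:Bcover} to Lemma~\ref{lm:Acover} by passing to a reversed partitioned hypergraph in which the roles of the left and right vertices of every triad are swapped, exactly as the sentence preceding the statement suggests.

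Given $\varepsilon>0$ and $n\in\NN$, I would first obtain $N$ and $m$ from Lemma~\ref{lm:Acover} with the same parameters. Let $\pi(i):=N+1-i$ and construct the $N$-partitioned hypergraph $H^*$ with parts $V^*_{ij}:=V_{\pi(j)\pi(i)}$ for $i<j$, keeping the edge set of $H$ unchanged. A direct check shows that a triad $(i,j,k)$ of $H^*$ with $i<j<k$ has parts $V_{\pi(j)\pi(i)}$, $V_{\pi(k)\pi(i)}$, $V_{\pi(k)\pi(j)}$, which are the right, top, and left parts, respectively, of the $(\pi(k),\pi(j),\pi(i))$-triad of $H$. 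Thus the top part is preserved while the left and right parts are exchanged; densities of triads, vertex degrees, and codegrees are preserved under this correspondence, and the left neighbors of any top vertex in $H$ become precisely its right neighbors in the corresponding triad of $H^*$.

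The next step is to apply Lemma~\ref{lm:Acover} to $H^*$ with the choice $\gamma^*_{ik}:=\gamma_{\pi(k)\pi(i)}\in V^*_{ik}$. This yields an $n$-partitioned subhypergraph $H^*_0$ of $H^*$ with index set $I^*\subseteq[N]$ and vertices $\alpha^{*,1}_{ij},\ldots,\alpha^{*,m}_{ij}\in V^*_{ij}$ satisfying the three bulleted conclusions of Lemma~\ref{lm:Acover}. Finally, I would translate the output back to $H$: set $I:=\pi(I^*)$, let $H_0$ be the subhypergraph of $H$ corresponding to $H^*_0$ under the edge-preserving identification, and define $\beta^\ell_{ij}:=\alpha^{*,\ell}_{\pi(j)\pi(i)}\in V_{ij}$ for $i<j$ with $i,j\in I$. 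The third conclusion of the present lemma then follows from the third conclusion in $H^*$ together with the swap of left and right neighbors, while the two degree conclusions carry over verbatim because the reversal preserves all edges incident to each top vertex, as well as the ambient triad sizes used to normalise the degrees.

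I do not anticipate a genuine obstacle here; the only point demanding care is the index bookkeeping, which the involution $\pi$ handles uniformly once the identification $V^*_{ij}\leftrightarrow V_{\pi(j)\pi(i)}$ has been fixed and once one confirms that this identification sends $(i,j,k)$-triads of $H^*$ to $(\pi(k),\pi(j),\pi(i))$-triads of $H$ with left and right roles exchanged.
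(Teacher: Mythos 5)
Your proposal is correct and is exactly the argument the paper indicates: applying Lemma~\ref{lm:Acover} to the reversed partitioned hypergraph obtained via the index involution $\pi(i)=N+1-i$, which swaps the left and right parts of every triad while preserving the top part and all edge data. The bookkeeping you carry out—identifying the $(i,j,k)$-triad of $H^*$ with the $(\pi(k),\pi(j),\pi(i))$-triad of $H$, setting $\gamma^*_{ik}=\gamma_{\pi(k)\pi(i)}$, and defining $\beta^\ell_{ij}=\alpha^{*,\ell}_{\pi(j)\pi(i)}$—is precisely what makes the symmetry rigorous, so there is nothing to add.
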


Our next lemma asserts that given a choice of top vertices,
either we find a small collection of particularly well-structured edges, or
we pass to subhypergraph with slightly smaller density such that
there is no edge involving two right neighbors of the chosen top vertices.
Note that the vertices $\gamma^0_{ik}$, $\gamma_{ik}$, $\gamma^1_{ik}$ and $\gamma^2_{ik}$
in the statement of the lemma need not be distinct.

\begin{lemma}
\label{lm:AAedges}
For every $\varepsilon>0$ and $n\in\NN$,
there exists $N\in\NN$ such that
one of the following holds for every $N$-partitioned hypergraph $H$ with density $d$ and choice of vertices $\gamma^0_{ik},\gamma_{ik},\gamma^1_{ik},\gamma^2_{ik}\in V_{ik}$, $1\le i<k\le N$.
\begin{itemize}
\item There exists an induced $n$-partitioned subhypergraph $H_0$ and $\alpha_{ij},\beta_{ij},\omega_{ij},\alpha^{\omega}_{ij},\gamma^{\omega}_{ij}\in V_{ij}$ such that
      for every $1\le i<j<k\le n$, $i,j,k\in I$, 
      $\{\alpha_{ij},\beta_{jk},\gamma_{ik}^0\}$ is an edge,
      $\{\omega_{ij},\beta_{jk},\gamma^{\omega}_{ik}\}$ is an edge, and
      $\{\alpha^{\omega}_{ij},\omega_{jk},\gamma_{ik}^0\}$ is an edge.
\item There exists an $n$-partitioned subhypergraph $H_0$ with index set $I\subseteq [N]$ such that
      \begin{itemize}
      \item the density of $H_0$ is at least $d-\varepsilon$,
      \item the degrees of $\gamma_{ik}$, $\gamma^1_{ik}$ and $\gamma^2_{ik}$ in the $(i,j,k)$-triad for $i<j<k$, $i,j,k\in I$,
            are smaller by at most $\varepsilon$ compared to their degrees in the same triad of $H$, and
      \item there is no $i<j<k<\ell$, $i,j,k,\ell\in I$, such that
            the $(j,k,\ell)$-triad contains an edge that
	    contains both a right neighbor of $\gamma^0_{ik}$ in the $(i,j,k)$-triad and
	    a right neighbor of $\gamma^0_{j\ell}$ in the $(j,k,\ell)$-triad,
	    i.e., no vertex of $V_{jk}$ is both a right neighbor of $\gamma^0_{ik}$ and a left neighbor of $\gamma^0_{j\ell}$.
      \end{itemize}
\end{itemize}
\end{lemma}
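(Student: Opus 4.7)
My approach is a dichotomy driven by how often the configuration forbidden in the second outcome has a quantitative witness. Introduce a small parameter $\delta=\delta(\varepsilon,N)$ and, for each $i<j<k<\ell$, define
$$W_{ijk\ell}=\{\omega\in V_{jk}:d_{jk|ik}(\omega,\gamma^0_{ik})\ge\delta\text{ and }d_{jk|j\ell}(\omega,\gamma^0_{j\ell})\ge\delta\}.$$
Using Ramsey's theorem to $2$-color the $4$-tuples according to whether $|W_{ijk\ell}|\ge\delta|V_{jk}|$ and passing to a large monochromatic index subset (sized to absorb the losses of the subsequent intersection-lemma reductions), the ``large-$W$'' case will yield the first outcome and the ``small-$W$'' case will yield the second.

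\textbf{Case~1 (outcome~1).} Apply Lemma~\ref{lm:extendKST2} with the sets $W_{ijk\ell}$ to obtain a further induced subhypergraph together with vertices $\omega_{jk}\in V_{jk}$ lying in $W_{ijk\ell}$ for every admissible $i<j<k<\ell$. Set $\gamma^\omega_{ik}:=\gamma^0_{ik}$. For edge~(c), the set $\{\alpha\in V_{ij}:\{\alpha,\omega_{jk},\gamma^0_{ik}\}$ is an edge$\}$ has size at least $\delta|V_{ij}|$ by the right-codegree condition on $\omega_{jk}$, so Lemma~\ref{lm:extend0a} supplies $\alpha^\omega_{ij}$. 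For edge~(b), the shifted-index left-codegree condition $d_{ij|ik}(\omega_{ij},\gamma^0_{ik})\ge\delta$ yields a candidate set of at least $\delta|V_{jk}|$ right neighbours of $\omega_{ij}$ via $\gamma^0_{ik}$; a pigeonhole refinement (using Markov's inequality on codegrees of $\beta$'s against $\gamma^0_{ik}$) cuts this down to a positive-density subset of $\beta$'s that additionally satisfy $d_{jk|ik}(\beta,\gamma^0_{ik})\ge\delta/2$, after which Lemma~\ref{lm:extend0b} delivers $\beta_{jk}$. Since $d_{jk|ik}(\beta_{jk},\gamma^0_{ik})\ge\delta/2$, a final application of Lemma~\ref{lm:extend0a} gives $\alpha_{ij}$ and closes edge~(a).

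\textbf{Case~2 (outcome~2).} For each $i<j<k<\ell$ split the vertices $\omega\in V_{jk}$ that are both a right neighbour of $\gamma^0_{ik}$ and a left neighbour of $\gamma^0_{j\ell}$ into those in $W_{ijk\ell}$ and those outside. By the case hypothesis there are at most $\delta|V_{jk}|$ of the former, and we destroy them by removing all $\le|V_{ij}|$ edges $\{\cdot,\omega,\gamma^0_{ik}\}$ from the $(i,j,k)$-triad. For each of the remaining such $\omega$, at least one of the codegrees $d_{jk|ik}(\omega,\gamma^0_{ik})$, $d_{jk|j\ell}(\omega,\gamma^0_{j\ell})$ is below $\delta$, and we delete on that side at a cost of at most $\delta|V_{ij}|$ or $\delta|V_{k\ell}|$ edges per $\omega$. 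Summing over $\omega$ and over the $O(N)$ choices of the fourth index, each triad suffers a density drop of order $N\delta$, which we hold below $\varepsilon$ by taking $\delta\ll\varepsilon/N$. Since every deleted edge uses $\gamma^0_{ik}$ or $\gamma^0_{j\ell}$ as its top vertex, the degrees of $\gamma,\gamma^1,\gamma^2$ in each triad fall by at most $\varepsilon$ even when $\gamma^0$ happens to coincide with one of them.

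\textbf{Main obstacle.} The crux is organising Case~1's dependency chain: $\omega_{jk}$ is chosen first by Lemma~\ref{lm:extendKST2}, then $\alpha^\omega_{ij}$, $\beta_{jk}$, and $\alpha_{ij}$ are each supplied by a separate intersection lemma whose input sets depend on the earlier selections. Each downstream step requires a quantitative codegree lower bound to be carried from the previous step, which forces a succession of pigeonhole trimmings and additional Ramsey reductions, all while $\delta$ must stay small enough for Case~2's deletion budget to fit inside $\varepsilon$ and large enough for Case~1's intersection lemmas to apply. Calibrating $N$, $\delta$, and the intermediate subhypergraph sizes consistently across these two opposing demands is the principal bookkeeping difficulty.
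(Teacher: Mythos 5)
Your overall architecture (Ramsey dichotomy on "large $W$'' vs.\ "small $W$'', with the small case giving outcome~2 by surgical edge removals and the large case giving outcome~1 via intersection lemmas) is the right shape and is the same shape the paper uses. However, the concrete definition of $W$, and consequently both branches of the dichotomy, has gaps that I do not think can be patched within the plan as written.

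\textbf{Gap in Case~2: the wrong forbidden configuration is eliminated.} The third bullet of outcome~2 forbids \emph{every} edge of the $(j,k,\ell)$-triad that contains a right neighbor of $\gamma^0_{ik}$ (a vertex of $B_{ijk}\subseteq V_{jk}$) and a right neighbor of $\gamma^0_{j\ell}$ (a vertex of $B_{jk\ell}\subseteq V_{k\ell}$), with \emph{arbitrary} top vertex in $V_{j\ell}$. The trailing ``i.e.''\ in the paper's statement is only a one-directional consequence of this, not an equivalent restatement, and in \S6 the downstream argument (the elimination of the pair $(B,B)$ when bounding $R'+S'$) uses the full edge version. Your Case~2 only removes edges incident to $\gamma^0_{ik}$ or $\gamma^0_{j\ell}$, so a vertex $w_1\in V_{jk}$ that remains a right neighbor of $\gamma^0_{ik}$ can still sit on an edge $\{w_1,w_2,w_3\}$ of the $(j,k,\ell)$-triad with $w_2\in B_{jk\ell}$ and $w_3\ne\gamma^0_{j\ell}$; you have not touched such edges, and there is no reason they should be few. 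To kill all $B_{ijk}$--$B_{jk\ell}$ edges you must charge the removal to the \emph{top} vertices of the $(j,k,\ell)$-triad, and this is exactly why the paper's $W_{ijk\ell}$ lives in $V_{j\ell}$ (top vertices incident to many $B_{ijk}$--$B_{jk\ell}$ edges), with a five-way Ramsey coloring separating the cases $\gamma_{j\ell}\in W$, $\gamma^1_{j\ell}\in W$, $\gamma^2_{j\ell}\in W$, $|W|$ large, and the green case. Your $W_{ijk\ell}\subseteq V_{jk}$ cannot see, let alone bound, the impact of this removal on the other top vertices of that triad.

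\textbf{Gap in Case~1: the Markov refinement does not work.} For edge~(a) you need $\beta_{jk}$ to have codegree at least $\delta/2$ with $\gamma^0_{ik}$, which you claim to get by Markov from the set of $\beta$'s joined to $\omega_{ij}$ and $\gamma^0_{ik}$. That set has size $\ge\delta|V_{jk}|$, but nothing forces any of its members to have codegree with $\gamma^0_{ik}$ exceeding $1/|V_{ij}|$: every one of those $\beta$'s might touch $\gamma^0_{ik}$ only through $\omega_{ij}$. The paper sidesteps this entirely with Lemma~\ref{lm:Acover}, applied as a preprocessing step: it produces a bounded family $\alpha^1_{ij},\dots,\alpha^m_{ij}$ so that \emph{every} right neighbor $\beta$ of $\gamma^0_{ik}$ forms an edge with $\gamma^0_{ik}$ and one of the $\alpha^\ell_{ij}$, with no codegree hypothesis on $\beta$; a final $m^2$-colored Ramsey application then stabilizes the choice of index. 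Your proposal never invokes Lemma~\ref{lm:Acover} or any surrogate, and without it the step from $\beta_{jk}$ to $\alpha_{ij}$ has no justification. (Setting $\alpha_{ij}:=\omega_{ij}$ and $\gamma^\omega_{ik}:=\gamma^0_{ik}$ would collapse edges~(a) and~(b) and incidentally close this hole, but you would then have to check that the downstream embeddings in \S6 tolerate that collapse, and in any case it would not touch the Case~2 gap, which is the more serious one.)

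In short: the dichotomy scaffold is right, but $W$ must be a set of top vertices in $V_{j\ell}$ charged against the density of the $(j,k,\ell)$-triad, not a set of link vertices in $V_{jk}$ charged against codegrees with $\gamma^0$; and outcome~1's final edge needs the Lemma~\ref{lm:Acover} covering device rather than a nonexistent codegree pigeonhole.
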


\begin{proof}
Fix $\varepsilon>0$ and $n\in\NN$.
Let $m=\lceil 6/\varepsilon\rceil$ and set $n_1,\ldots,n_7$ as follows:
$n_1$ is $R_{m^2}^3(n)$,
$n_2=n_1+1$,
$n_3$ is the value of $N$ from Lemma~\ref{lm:extend0b} applied with $n_2$ and $\delta=\varepsilon/(6n)$,
$n_4$ is the value of $N$ from Lemma~\ref{lm:extendKST2} applied with $n_3$ and $\delta=\varepsilon/(6n)$,
$n_5$ is the value of $N$ from Lemma~\ref{lm:extendKST1rev} applied with $n_4$ and $\delta=\varepsilon/(3n)$,
$n_6$ is $R_5^4(n_5)$, and,
finally, $n_7$ is the value of $N$ from Lemma~\ref{lm:Acover} applied with $n_6$ and $\varepsilon/3$.
We will show that the statement of the lemma holds for $N=n_7$. 

We first apply Lemma~\ref{lm:Acover} with $\gamma^0_{ik}$ to get an $n_6$-partitioned subhypergraph $H_6$ of $H$ and
vertices $\alpha_{ij}^1,\ldots,\alpha_{ij}^{m}$ such that
the degree of each $\gamma^0_{ik}$ is smaller by at most $\varepsilon/3$ compared to $H$ and
every right neighbor of $\gamma^0_{ik}$ in an $(i,j,k)$-triad of $H_6$
forms an edge together with $\gamma^0_{ik}$ and (at least) one of the vertices $\alpha_{ij}^1,\ldots,\alpha_{ij}^{m}$.

For $i<j<k$, $i,j,k\in I_6$,
let $B_{ijk}\subseteq V_{jk}$ be the set of right neighbors of $\gamma^0_{ik}$ in the $(i,j,k)$-triad.
We next construct an auxiliary $5$-edge-colored $4$-uniform complete hypergraph $G$ with vertex set $I_6$.
For any $i<j<k<\ell$, $i,j,k,\ell\in I_6$,
let $W_{ijk\ell}$ be the set of vertices $w\in V_{j\ell}$ such that
the number of edges formed by the vertex $w$, a vertex of $B_{ijk}\subseteq V_{jk}$ and a vertex of $B_{jk\ell}\subseteq V_{k\ell}$
is at least $\varepsilon |V_{jk}||V_{k\ell}|/(3n)$.
An edge formed by $i<j<k<\ell$, $i,j,k,\ell\in I_6$ is colored (we use the first option that applies)
\begin{itemize}
\item red if $\gamma_{j\ell}\in W_{ijk\ell}$,
\item orange if $\gamma^1_{j\ell}\in W_{ijk\ell}$,
\item amber if $\gamma^2_{j\ell}\in W_{ijk\ell}$, 
\item blue if $|W_{ijk\ell}|\ge\varepsilon|V_{j\ell}|/(3n)$, and
\item green, otherwise.
\end{itemize}

By Ramsey's Theorem,
there exists an $n_5$-element subset $I_5\subseteq I_6$ such that
all edges of $G$ formed by the elements of $I_5$ have the same color.

If the common color of the edges of $G$ formed by the elements of $I_5$ is green, we proceed as follows.
Let $I$ be any $n$-element subset of $I_5$ and
let $H_0$ be the $n$-partitioned hypergraph obtained
from the $n$-partitioned subhypergraph of $H_6$ induced by $I$ by removing
for all $i<j<k<\ell$, $i,j,k,\ell\in I$,
all edges containing a vertex of $B_{ijk}$ and a vertex of $B_{jk\ell}$ from the $(j,k,\ell)$-triad.
For any $i<j<k<\ell, i,j,k,\ell\in I$,
the degree of any top vertex of the $(j,k,\ell)$-triad not contained in $W_{ijk\ell}$
is decreased by at most $\varepsilon/(3n)$ (this follows from the definition of $W_{ijk\ell}$).
In particular, for a fixed $(j,k,\ell)$-triad,
there are at most $n$ choices of $i$ and
so the degree of any such top vertex is decreased by at most $\varepsilon/3$ compared to $H_6$.
Since the common color of the edges is green,
neither of the vertices $\gamma_{j\ell}$, $\gamma^1_{j\ell}$ and $\gamma^2_{j\ell}$ is contained in $W_{ijk\ell}$ and
so the degree in the $(j,k,\ell)$-triad of each of them
is decreased by at most $\varepsilon/3+\varepsilon/3<\varepsilon$ compared to $H$.
Since the number of edges of a $(j,k,\ell)$-triad consisting of a vertex from $B_{ijk}$, $B_{jk\ell}$ and $W_{ijk\ell}$, for some $i$,
is at most $n \cdot \varepsilon/(3n)=\varepsilon/3$ (as the common color of edges of $G$ is green,
it holds that $|W_{ijk\ell}|<\varepsilon|V_{j\ell}|/(3n)$),
the density of any $(j,k,\ell)$-triad has decreased by at most $2\varepsilon/3$ compared to $H_6$ and
so by at most $\varepsilon$ compared to $H$.
It follows that the $n$-partitioned subhypergraph $H_0$ of $H$ satisfies the second conclusion of the lemma.

If the common color of the edges of $G$ formed by the elements of $I_5$ is blue,
we apply Lemma~\ref{lm:extendKST1rev} with the sets $W_{ijk\ell}$, $i,j,k,\ell\in I_5$, and
obtain an $n_4$-element subset $I_4\subseteq I_5$ and vertices $\gamma^{\omega}_{j\ell}$ such that
$\gamma^{\omega}_{j\ell}\in W_{ijk\ell}$ for all $i<j<k<\ell$, $i,j,k,\ell\in I_4$.
If the common color of the edges is red, orange or amber,
we set $I_4$ to be any $n_4$-element subset of $I_5$, and
we set $\gamma^{\omega}_{j\ell}=\gamma_{j\ell}$ if the common color is red,
$\gamma^{\omega}_{j\ell}=\gamma^1_{j\ell}$ if the common color is orange, and
$\gamma^{\omega}_{j\ell}=\gamma^2_{j\ell}$ if the common color is amber.
In each of these four cases,
it holds that $\gamma^{\omega}_{j\ell}\in W_{ijk\ell}$ for all $i<j<k<\ell$, $i,j,k,\ell\in I_4$, and
we next treat them together.
Let now $X_{ijk\ell}\subseteq B_{ijk}\subseteq V_{jk}$ be the set of vertices $w$ that
form an edge with $\gamma_{j\ell}^{\omega}$ and at least $\varepsilon|V_{k\ell}|/(6n)$ vertices of $B_{jk\ell}\subseteq V_{k\ell}$.
Observe that $|X_{ijk\ell}| \ge \varepsilon|V_{jk}|/(6n)$:
otherwise, the number of edges formed by a vertex of $B_{ijk}$, a vertex of $B_{jk\ell}$, and the vertex $\gamma_{j\ell}^{\omega}$
would be smaller than $\varepsilon|V_{jk}|/(6n) \cdot |V_{k\ell}|+|V_{jk}| \cdot \varepsilon|V_{k\ell}|/(6n)=\varepsilon|V_{jk}| \cdot |V_{k\ell}|/(3n)$ (the first term counts the edges containing a vertex from $X_{ijk\ell}$ and the latter the edges not containing a vertex from $X_{ijk\ell}$),
which is impossible since $\gamma_{j\ell}^{\omega} \in W_{ijk\ell}$.
We next apply Lemma~\ref{lm:extendKST2} to obtain an $n_3$-element subset $I_3\subseteq I_4$ and vertices $\omega_{jk}$ such that
$\omega_{jk}\in X_{ijk\ell}\subseteq B_{ijk}$ for all $i<j<k<\ell$, $i,j,k,\ell\in I_3$.
For any $j<k<\ell, j,k,\ell \in I_3$,
we set $B'_{jk\ell}\subseteq B_{jk\ell}\subseteq V_{k\ell}$ to be the set of vertices contained in $B_{jk\ell}$
that form an edge together with $\omega_{jk}$ and $\gamma_{j\ell}^{\omega}$.
Since $\omega_{jk}$ belongs to $X_{ijk\ell}$, the set $B'_{jk\ell}$ has at least $\varepsilon|V_{k\ell}|/(6n)$ elements.
Hence, we can apply Lemma~\ref{lm:extend0b} and
obtain an $n_2$-element subset $I_{2}\subseteq I_3$ and vertices $\beta_{k\ell}$ such that
$\beta_{k\ell}\in B'_{jk\ell}\subseteq B_{jk\ell}$ for all $j<k<\ell$, $j,k,\ell\in I_{2}$.
Finally, we let $I_1$ be the subset of $I_2$ obtained by removing its smallest index. 

To summarize,
if the common color of the edges of $G$ formed by the elements of $I_5$ is not green,
we have found an $n_1$-element subset $I_1\subseteq I_5$ and
vertices $\gamma_{jk}^{\omega},\omega_{jk}, \beta_{jk} \in V_{jk}$ for all $j,k\in I_1$ such that
for all $j<k<\ell$, $j,k,\ell\in I_{1}$,
both the vertices $\beta_{k\ell}$ and $\omega_{k\ell}$ are contained in $B_{jk\ell}$,
i.e., they are right neighbors of $\gamma^0_{j\ell}$, and
the $(j,k,\ell)$-triad contains an edge formed by the vertices $\omega_{jk}$, $\beta_{k\ell}$ and $\gamma^{\omega}_{j\ell}$.

Let $H_1$ be the $n_1$-partitioned subhypergraph of $H_6$ induced by $I_1$. 
We now define an auxiliary $m^2$-edge-colored $3$-uniform complete hypergraph $G'$ with vertex set $I_1$;
an edge formed by $j<k<\ell$, $j,k,\ell\in I_1$ is assigned a color $(a,b)\in [m]^2$
if the $(j,k,\ell)$-triad of $H_6$ contains the edges $\{\alpha^a_{jk}, \omega_{k\ell}, \gamma_{j\ell}^0\}$ and $\{\alpha_{jk}^b, \beta_{k\ell}, \gamma_{j\ell}^0\}$;
if there are multiple choices of $a$ and $b$, we choose one arbitrarily.
Since both $\beta_{k\ell}$ and $\omega_{k\ell}$ are contained in $B_{jk\ell}$ and
$B_{jk\ell}$ contains only right neighbors of $\gamma_{j\ell}^0$ in $H_6$,
it follows that such $a$ and $b$ always exist.
By Ramsey's Theorem, there exists an $n$-element subset $I$ of $I_1$ such that
all edges of $G'$ formed by the elements of $I$ have the same color, say $(a,b)$.
We set $H_0$ to be the $n$-partitioned subhypergraph of $H$ induced by $I$, and
$\alpha_{jk}^{\omega}=\alpha^a_{jk}$ and $\alpha_{jk}=\alpha^b_{jk}$ for all $j<k$, $j,k\in I$.
It follows that the induced $n$-partitioned subhypergraph $H_0$ of $H$ (not of $H_6$) and
the vertices $\alpha_{jk},\alpha^{\omega}_{jk},\omega_{jk},\beta_{jk},\gamma^{\omega}_{jk}\in V_{jk}$, $j<k$, $j,k\in I$,
satisfy the first conclusion of the lemma.
\end{proof}

The next lemma is symmetric to Lemma~\ref{lm:AAedges} and
can be proven by applying Lemma~\ref{lm:AAedges} to a partitioned hypergraph with the order of parts reversed.

\begin{lemma}
\label{lm:BBedges}
For every $\varepsilon>0$ and $n\in\NN$,
there exists $N\in\NN$ such that
one of the following holds for every $N$-partitioned hypergraph $H$ with density $d$ and choice of vertices $\gamma^0_{ik},\gamma_{ik},\gamma^1_{ik},\gamma^2_{ik}\in V_{ik}$, $1\le i<k\le N$.
\begin{itemize}
\item There exists an induced $n$-partitioned subhypergraph $H_0$ and $\alpha_{ij},\beta_{ij},\omega_{ij},\beta^{\omega}_{ij},\gamma^{\omega}_{ij}\in V_{ij}$ such that
      for every $1\le i<j<k\le n$, $i,j,k\in I$,
      $\{\alpha_{ij},\beta_{jk},\gamma_{ik}^0\}$ is an edge,
      $\{\alpha_{ij},\omega_{jk},\gamma^{\omega}_{ik}\}$ is an edge, and
      $\{\omega_{ij},\beta^{\omega}_{jk},\gamma_{ik}^0\}$ is an edge.
\item There exists an $n$-partitioned subhypergraph $H_0$ with index set $I\subseteq [N]$ such that
      \begin{itemize}
      \item the density of $H_0$ is at least $d-\varepsilon$,
      \item the degrees of $\gamma_{ik}$, $\gamma^1_{ik}$ and $\gamma^2_{ik}$ in the $(i,j,k)$-triad for $i<j<k$, $i,j,k\in I$,
            are smaller by at most $\varepsilon$ compared to their degrees in the same triad of $H$ and
      \item there is no $i<j<k<\ell$, $i,j,k,\ell\in I$, such that
            the $(i,j,k)$-triad contains an edge that
	    contains both a left neighbor of $\gamma^0_{ik}$ in the $(i,j,k)$-triad and
	    a left neighbor of $\gamma^0_{j\ell}$ in the $(j,k,\ell)$-triad,
	    i.e., no vertex of $V_{jk}$ is both a right neighbor of $\gamma^0_{ik}$ and a left neighbor of $\gamma^0_{j\ell}$.
      \end{itemize}
\end{itemize}
\end{lemma}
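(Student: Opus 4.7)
The plan is to derive Lemma~\ref{lm:BBedges} from Lemma~\ref{lm:AAedges} by passing to the partitioned hypergraph obtained from $H$ by reversing the order of its parts. Concretely, I would define $H^{\text{rev}}$ to be the $N$-partitioned hypergraph with vertex parts $V^{\text{rev}}_{ij}:=V_{N+1-j,N+1-i}$ for $1\le i<j\le N$ and with the same underlying edge set as $H$. Under this renaming, the $(i,j,k)$-triad of $H^{\text{rev}}$ coincides as a set of edges with the $(N+1-k,N+1-j,N+1-i)$-triad of $H$; the crucial feature is that left vertices in $H^{\text{rev}}$ correspond to right vertices in $H$ and vice versa, while top vertices remain top vertices. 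In particular, the density of each triad is preserved, as is the degree of any top vertex, and a right neighbor (resp.\ left neighbor) of a top vertex in $H^{\text{rev}}$ corresponds to a left neighbor (resp.\ right neighbor) of that same vertex in $H$.

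Given the input vertices $\gamma^0_{ik},\gamma_{ik},\gamma^1_{ik},\gamma^2_{ik}$ in $H$, I transport them via the bijection of parts to obtain an analogous family of vertices in $H^{\text{rev}}$, and then apply Lemma~\ref{lm:AAedges} to $H^{\text{rev}}$ with the same parameters $\varepsilon$ and $n$. The resulting index set $I\subseteq [N]$ translates under $i\mapsto N+1-i$ to an $n$-element index subset of $[N]$, which will be the index set of the subhypergraph $H_0$ claimed by Lemma~\ref{lm:BBedges}.

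If Lemma~\ref{lm:AAedges} returns its first outcome, then the three edges $\{\alpha_{ij},\beta_{jk},\gamma^0_{ik}\}$, $\{\omega_{ij},\beta_{jk},\gamma^\omega_{ik}\}$, $\{\alpha^\omega_{ij},\omega_{jk},\gamma^0_{ik}\}$ produced in $H^{\text{rev}}$ translate, after relabeling left vertices as right and right vertices as left, to the three edges required by the first outcome of Lemma~\ref{lm:BBedges}: the first edge of Lemma~\ref{lm:AAedges} corresponds to $\{\alpha'_{pq},\beta'_{qr},\gamma^{0\prime}_{pr}\}$; the second, which shares a right vertex $\beta_{jk}$ with the first in $H^{\text{rev}}$, corresponds under the swap to an edge sharing a left vertex with the first, yielding $\{\alpha'_{pq},\omega'_{qr},\gamma^{\omega\prime}_{pr}\}$; and the third, which shares a top vertex $\gamma^0_{ik}$ with the first, corresponds to $\{\omega'_{pq},\beta^{\omega\prime}_{qr},\gamma^{0\prime}_{pr}\}$. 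If Lemma~\ref{lm:AAedges} returns its second outcome, the density and degree decrements transfer verbatim to $H$ since corresponding triads share their edge sets, and the condition that no vertex of $V^{\text{rev}}_{jk}$ is both a right neighbor of $\gamma^0_{ik}$ in the $(i,j,k)$-triad and a left neighbor of $\gamma^0_{j\ell}$ in the $(j,k,\ell)$-triad of $H^{\text{rev}}$ translates, after swapping left and right, into the statement required by Lemma~\ref{lm:BBedges}.

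The only real obstacle is the bookkeeping: one must verify carefully that the involution on indices together with the swap of left/right roles sends each named vertex in the conclusion of Lemma~\ref{lm:AAedges} to a named vertex of the correct type in Lemma~\ref{lm:BBedges}, and that the asymmetry in Lemma~\ref{lm:AAedges} (three left vertices, two right vertices per index pair) matches the dual asymmetry of Lemma~\ref{lm:BBedges} (three right, two left). Once this correspondence is recorded, the rest of the argument is a direct substitution.
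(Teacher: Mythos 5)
Your proof is correct and follows exactly the route the paper uses: the paper notes that Lemma~\ref{lm:BBedges} ``can be proven by applying Lemma~\ref{lm:AAedges} to a partitioned hypergraph with the order of parts reversed,'' which is precisely the involution $i\mapsto N+1-i$ you describe, under which left and right neighbors swap while top vertices, triad densities, and degrees are preserved. Your bookkeeping of the vertex correspondences and of the translated no-common-edge condition is also consistent with what the paper intends, so this matches the paper's own (compressed) argument.
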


The next lemma concerns edges containing both right and left neighbors of chosen top vertices instead of two right neighbors.
The proof is similar to that of Lemma~\ref{lm:AAedges},
although the first conclusion is more complicated.

\begin{lemma}
\label{lm:BAedges}
For every $\varepsilon>0$ and $n\in\NN$,
there exists $N\in\NN$ such that
one of the following holds for every $N$-partitioned hypergraph $H$ with density $d$ and choice of vertices $\gamma^0_{ik},\gamma_{ik},\gamma^1_{ik},\gamma^2_{ik}\in V_{ik}$, $1\le i<k\le N$.
\begin{itemize}
\item There exists an induced $n$-partitioned subhypergraph $H_0$ with index set $I\subseteq [N]$ and
      vertices $\alpha_{ij},\beta_{ij},$ $\alpha'_{ij},\beta'_{ij},\gamma^{\bullet}_{ij}\in V_{ij}$, $i<j$, $i,j\in I$, and
      vertices $\beta^{\bullet}_{ijkst}\in V_{jk}$ and $\alpha^{\bullet}_{ijkst}\in V_{ks}$ such that
      the following holds for all $i<j<k<s<t$, $i,j,k,s,t\in I$,
      \begin{itemize}
      \item $\{\alpha_{ij},\beta_{jk},\gamma^0_{ik}\}$ is an edge in the $(i,j,k)$-triad,
      \item $\{\alpha'_{ij},\beta^{\bullet}_{ijkst},\gamma^0_{ik}\}$ is an edge in the $(i,j,k)$-triad,
      \item $\{\alpha^{\bullet}_{ijkst},\beta'_{st},\gamma^0_{kt}\}$ is an edge in the $(k,s,t)$-triad, and
      \item $\{\beta^{\bullet}_{ijkst},\alpha^{\bullet}_{ijkst},\gamma^{\bullet}_{js}\}$ is an edge in the $(j,k,s)$-triad.
      \end{itemize}
\item There exists an $n$-partitioned subhypergraph $H_0$ with index set $I\subseteq [N]$ such that
      \begin{itemize}
      \item the density of $H_0$ is at least $d-\varepsilon$,
      \item the degrees of $\gamma_{ik}$, $\gamma^1_{ik}$ and $\gamma^2_{ik}$ in the $(i,j,k)$-triad for $i<j<k$, $i,j,k\in I$,
            are smaller by at most $\varepsilon$ than their degrees in the same triad of $H$ and
      \item there is no $i<j<k<s<t$, $i,j,k,s,t\in I$, such that
            the $(j,k,s)$-triad contains an edge that
	    contains both a right neighbor of $\gamma^0_{ik}$ in the $(i,j,k)$-triad and
	    a left neighbor of $\gamma^0_{kt}$ in the $(k,s,t)$-triad.
      \end{itemize}
\end{itemize}
\end{lemma}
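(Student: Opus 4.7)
The plan is to mirror the argument for Lemma \ref{lm:AAedges}, with the auxiliary coloring now placed on $5$-tuples (to capture the dependence of the ``bridge'' configuration on the five indices $i<j<k<s<t$) rather than $4$-tuples, and with the general intersection lemma Lemma \ref{lm:intersect} invoked in place of Lemma \ref{lm:extendKST1rev}. A new feature compared to Lemma \ref{lm:AAedges} is that both right neighbors of $\gamma^0_{ik}$ (to be covered by $\alpha'_{ij}$) and left neighbors of $\gamma^0_{kt}$ (to be covered by $\beta'_{st}$) are relevant, so both Lemma \ref{lm:Acover} and its symmetric counterpart Lemma \ref{lm:Bcover} are applied at the outset. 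These yield universal vertices $\alpha^1_{ij},\dots,\alpha^m_{ij}$ covering the right neighbors of $\gamma^0_{ik}$ and $\beta^1_{ij},\dots,\beta^{m'}_{ij}$ covering its left neighbors, each at the cost of a small additive drop in the degree of $\gamma^0_{ik}$ and no drop in the degrees of $\gamma_{ik},\gamma^1_{ik},\gamma^2_{ik}$.

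For every valid quintuple $i<j<k<s<t$, set $B_{ijk}\subseteq V_{jk}$ to be the right neighbors of $\gamma^0_{ik}$, $A_{kst}\subseteq V_{ks}$ the left neighbors of $\gamma^0_{kt}$, and define $W_{ijkst}\subseteq V_{js}$ to be the set of vertices $w$ of the $(j,k,s)$-triad that take part in at least $\eta\,|V_{jk}|\,|V_{ks}|$ edges $\{\beta,\alpha,w\}$ with $\beta\in B_{ijk}$ and $\alpha\in A_{kst}$, where $\eta$ is chosen of order $\varepsilon/n^2$. Color each $5$-tuple with one of five colors according to whether $\gamma_{js}\in W_{ijkst}$, $\gamma^1_{js}\in W_{ijkst}$, $\gamma^2_{js}\in W_{ijkst}$, $|W_{ijkst}|\ge\eta\,|V_{js}|$, or none of these holds (the ``green'' color), and use Ramsey's theorem to pass to a large monochromatic index subset. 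In the green case, removing from each $(j,k,s)$-triad the edges whose left endpoint lies in $B_{ijk}$ and right endpoint lies in $A_{kst}$ for every valid pair $(i,t)$ drops the density of each triad by at most $O(n^2\eta)<\varepsilon$ and, by the definition of $W_{ijkst}$, drops the degrees of $\gamma_{js},\gamma^1_{js},\gamma^2_{js}$ by at most $\varepsilon$; hence the second conclusion is obtained.

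Otherwise, a vertex $\gamma^{\bullet}_{js}\in V_{js}$ with $\gamma^{\bullet}_{js}\in W_{ijkst}$ for every valid quintuple is obtained either directly in the red, orange and amber cases, or by applying Lemma \ref{lm:intersect}, whose five-index formulation exactly matches the dependency pattern of $W_{ijkst}$, in the blue case. For every quintuple, pick any edge $\{\beta^{\bullet}_{ijkst},\alpha^{\bullet}_{ijkst},\gamma^{\bullet}_{js}\}$ with $\beta^{\bullet}_{ijkst}\in B_{ijk}$ and $\alpha^{\bullet}_{ijkst}\in A_{kst}$; since $\beta^{\bullet}_{ijkst}$ is a right neighbor of $\gamma^0_{ik}$ and $\alpha^{\bullet}_{ijkst}$ a left neighbor of $\gamma^0_{kt}$, each is covered by one of the universal vertices from the initial step, so some $(\ell,\ell')\in[m]\times[m']$ witnesses this. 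Coloring the $5$-tuples by $(\ell,\ell')$ and applying Ramsey once more fixes a single pair throughout, yielding $\alpha'_{ij}:=\alpha^{\ell}_{ij}$ and $\beta'_{st}:=\beta^{\ell'}_{st}$. Finally, Lemma \ref{lm:extend0ab} applied to $\gamma^0_{ik}$ produces $\alpha_{ij},\beta_{ij}$ completing the edges $\{\alpha_{ij},\beta_{jk},\gamma^0_{ik}\}$ for all $i<j<k$ in the final index set. The main obstacle is bookkeeping: the constant hierarchy $N\gg\dots\gg n$ must be arranged so that the degree and density losses across successive Ramsey, intersection and extension steps all fit within the prescribed budget of $\varepsilon$, and the several universal vertices extracted at different stages must be made to coexist on a common final $n$-partitioned index set.
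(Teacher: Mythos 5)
Your proposal follows the same general skeleton as the paper's proof — apply Lemmas~\ref{lm:Acover} and~\ref{lm:Bcover} to obtain universal covers of right and left neighbors, define $W_{ijkst}$ on quintuples, do the five-way Ramsey coloring, pass to $\gamma^\bullet_{js}$ via Lemma~\ref{lm:intersect}, and fix $\alpha'_{ij}$, $\beta'_{st}$ with a product coloring. There is, however, a genuine gap in how you propose to obtain $\alpha_{ij}$ and $\beta_{ij}$ for the first bullet of the first conclusion.

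You invoke Lemma~\ref{lm:extend0ab} at the very end to produce $\alpha_{ij},\beta_{ij}$ with $\{\alpha_{ij},\beta_{jk},\gamma^0_{ik}\}$ an edge. But Lemma~\ref{lm:extend0ab} requires a uniform positive lower bound $\delta$ on the degree of $\gamma^0_{ik}$ in every relevant $(i,j,k)$-triad, and the value of $N$ it produces depends on that $\delta$. Nothing in your setup establishes such a bound. In the non-green case of the 5-coloring you can deduce that $B_{ijk}$ has size at least $\eta|V_{jk}|$, i.e.\ $\gamma^0_{ik}$ has linearly many right neighbors, but each right neighbor could contribute codegree as small as $1/|V_{ij}|$, so the degree of $\gamma^0_{ik}$ itself can be arbitrarily small. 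The paper avoids this by inserting a preliminary Ramsey step: a $2$-coloring of triples by whether the degree of $\gamma^0_{ik}$ in the $(i,j,k)$-triad is at least $2\varepsilon/3$. When it is small, all edges through $\gamma^0_{ik}$ are removed and the second conclusion follows directly (the degrees of $\gamma_{ik},\gamma^1_{ik},\gamma^2_{ik}$ drop by at most $2\varepsilon/3$); when it is large, Lemma~\ref{lm:extend0ab} is applied with the explicit bound $\delta=2\varepsilon/3$, and this is done \emph{before} the quintuple coloring so that $\alpha_{ij},\beta_{ij}$ are already fixed. Your proposal needs this extra degree-threshold dichotomy; without it the final invocation of Lemma~\ref{lm:extend0ab} has no parameter to feed in.
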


\begin{proof}
Fix $\varepsilon>0$ and $n\in\NN$.
Let $m=\lceil 12/\varepsilon\rceil$ and set $n_1,\ldots,n_7$ as follows:
$n_1$ is $R_{m^2}^5(n)$,
$n_2$ is the value of $N$ from Lemma~\ref{lm:intersect} applied with $n_1$ and $\delta=\varepsilon/(2n^2)$,
$n_3$ is $R_5^5(n_2)$,
$n_4$ is the value of $N$ from Lemma~\ref{lm:extend0ab} applied with $n_3$ and $2\varepsilon/3$,
$n_5$ is $R_2^3(n_4)$,
$n_6$ is the value of $N$ from Lemma~\ref{lm:Bcover} applied with $n_5$ and $\varepsilon/6$, and,
finally, $n_7$ is the value of $N$ from Lemma~\ref{lm:Acover} applied with $n_6$ and $\varepsilon/6$.
We will show that the statement of the lemma holds for $N=n_7.$ 

We first apply Lemma~\ref{lm:Acover} with $\gamma^0_{ik}$ to get an $n_6$-partitioned subhypergraph $H_6$ of $H$ and
vertices $\alpha_{ij}^1,\ldots,\alpha_{ij}^{m}$ such that
the degree of each $\gamma^0_{ik}$ is smaller by at most $\varepsilon/6$ compared to $H$ and
every right neighbor of $\gamma^0_{ik}$ in an $(i,j,k)$-triad of $H_6$
forms an edge together with $\gamma^0_{ik}$ and (at least) one of the vertices $\alpha_{ij}^1,\ldots,\alpha_{ij}^{m}$.
We next apply Lemma~\ref{lm:Bcover} with $\gamma_{ik}^0$ to get an $n_5$-partitioned subhypergraph $H_5$ of $H_6$ and
vertices $\beta_{ij}^1,\ldots,\beta_{ij}^{m}$ such that
the degree of each $\gamma^0_{ik}$ is smaller by at most $\varepsilon/6$ compared to $H_5$ and
every left neighbor of $\gamma^0_{ik}$ in an $(i,j,k)$-triad of $H_5$
forms an edge together with $\gamma^0_{ik}$ and (at least) one of the vertices $\beta_{jk}^1,\ldots,\beta_{jk}^{m}$.
Note that
the degree of each $\gamma^0_{ik}$ in $H_5$ is smaller by at most $\varepsilon/3$ compared to $H$ and
the degrees of other top vertices are the same in $H_5$ and $H$.

Let $I_5$ be the index set of the $n_5$-partitioned subhypergraph $H_5$.
We now construct an auxiliary $2$-edge-colored $3$-uniform complete hypergraph $G$ with vertex set $I_5$.
An edge formed by $i<j<k$, $i,j,k\in I_5$ is colored
\begin{itemize}
\item blue if the degree of $\gamma^0_{ik}$ in the $(i,j,k)$-triad is smaller than $2\varepsilon/3$, and
\item green, otherwise.
\end{itemize}
By Ramsey's Theorem, there exists an $n_4$-element subset $I_4$ of $I_5$ such that
all edges formed by $i<j<k$, $i,j,k\in I_4$, have the same color.
If the common color of the edges is blue, we set $I$ to be any $n$-element subset of $I_4$ and
$H_0$ to be the $n$-partitioned hypergraph obtained from the $n$-partitioned subhypergraph of $H_5$ induced by $I$
by removing all edges containing $\gamma^0_{ik}$ from each $(i,j,k)$-triad, $i<j<k$, $i,j,k\in I$. 
Since the degree of $\gamma^0_{ik}$ in the $(i,j,k)$-triad
dropped by at most an additional $2\varepsilon/3$ compared to $H_5$ for all $i<j<k$, $i,j,k\in I$,
the hypergraph $H_0$ satisfies the second conclusion of the lemma.

If the common color of the edges formed by the elements of $I_4$ is green,
we apply Lemma~\ref{lm:extend0ab} with the $n_4$-partitioned subhypergraph of $H_4$ induced by $I_4$ and
the vertices $\gamma^0_{ik}$, $i<k$, $i,k\in I_4$,
to obtain an $n_3$-partitioned subhypergraph $H_3$ of $H_4$ induced by $I_3\subseteq I_4$ and
vertices $\alpha_{ij}\in V_{ij}$ and $\beta_{ij}\in V_{ij}$ such that
$\{\alpha_{ij},\beta_{jk},\gamma^0_{ik}\}$ is an edge for all $i<j<k$, $i,j,k\in I_3$.

For $i<j<k$, $i,j,k\in I_3$,
let $A_{ijk}\subseteq V_{ij}$ be the set of left neighbors of $\gamma^0_{ik}$ in the $(i,j,k)$-triad and
$B_{ijk}\subseteq V_{jk}$ the set of right neighbors of $\gamma^0_{ik}$.
We next construct an auxiliary $5$-edge-colored $5$-uniform complete hypergraph $G'$ with vertex set $I_3$.
Let $W_{ijkst}$ for $i<j<k<s<t$, $i,j,k,s,t\in I_3$, be the set of vertices $w\in V_{js}$ such that
there is an edge formed by the vertex $w$, a vertex of $B_{ijk}$ and a vertex of $A_{kst}$.
An edge of $G'$ formed by $i<j<k<s<t$, $i,j,k,s,t\in I_3$ is colored (we use the first option that applies)
\begin{itemize}
\item red if $\gamma_{js}\in W_{ijkst}$,
\item orange if $\gamma^1_{js}\in W_{ijkst}$,
\item amber if $\gamma^2_{js}\in W_{ijkst}$,
\item blue if $|W_{ijkst}|\ge\varepsilon|V_{js}|/(2n^2)$, and
\item green, otherwise.
\end{itemize}

By Ramsey's Theorem,
there exists an $n_2$-element subset $I_2\subseteq I_3$ such that
all edges of $G'$ formed by the elements of $I_2$ have the same color.
If the common color of the edges of $G'$ formed by the elements of $I_2$ is green,
we proceed as follows.
Let $I$ be any $n$-element subset of $I_2$ and
let $H_0$ be the $n$-partitioned hypergraph obtained
from $n$-partitioned subhypergraph of $H_3$ induced by $I$ by removing
for all $i<j<k<s<t$, $i,j,k,s,t\in I$,
all edges containing a vertex of $B_{ijk}$ and a vertex of $A_{kst}$ from the $(j,k,s)$-triad.
Since the edge of $G'$ formed by $i<j<k<s<t$ is not blue,
the density of the $(j,k,s)$-triad drops by at most $\varepsilon/(2n^2)$ for each choice of $i$ and $t$.
It follows that the density of $H_0$ is smaller by at most $\varepsilon/2$ compared to the density of $H_3$ and
so by at most $\varepsilon$ compared to $H$.
Moreover, since the edge of $G'$ formed by $i<j<k<s<t$ is not red, orange or amber
the degrees of $\gamma_{js}, \gamma^1_{js}$ and $\gamma^2_{js}$ in the $(j,k,s)$-triad remain the same as they were in $H_5$.
It follows that the $n$-partitioned subhypergraph $H_0$ of $H$ satisfies the second conclusion of the lemma.

If the common color of the edges of $G'$ formed by the elements of $I_2$ is blue,
we apply Lemma~\ref{lm:intersect} with the sets $W_{ijkst}$, $i,j,k,s,t\in I_2$, and
obtain an $n_1$-element subset $I_1\subseteq I_2$ and vertices $\gamma^{\bullet}_{js}$ such that
$\gamma^{\bullet}_{js}\in W_{ijkst}$ for all $i<j<k<s<t$, $i,j,k,s,t\in I_1$.
If the common color of the edges is red, orange or amber,
we set $I_1$ to be any $n_1$-element subset of $I_2$, and
we set $\gamma^{\bullet}_{js}=\gamma_{js}$ if the common color is red,
$\gamma^{\bullet}_{js}=\gamma^1_{js}$ if the common color is orange, and
$\gamma^{\bullet}_{js}=\gamma^2_{js}$ if the common color is amber.
Note that, regardless whether the common color of the edges of $G'$
formed by the elements of $I_2$ is blue, red, orange or amber,
it holds for all $i<j<k<s<t$, $i,j,k,s,t\in I_1$, that
the vertex $\gamma^{\bullet}_{js}$ forms an edge with a vertex of $B_{ijk}$ and a vertex of $A_{kst}$.

Let $H_1$ be the $n_1$-partitioned subhypergraph of $H_3$ induced by $I_1$.
For each $i<j<k<s<t$, $i,j,k,s,t\in I_1$,
fix a vertex $\beta^{\bullet}_{ijkst}\in B_{ijk} \subseteq V_{jk}$ and
a vertex $\alpha^{\bullet}_{ijkst}\in A_{kst} \subseteq V_{ks}$ such that
$\{\beta^{\bullet}_{ijkst},\alpha^{\bullet}_{ijkst},\gamma^{\bullet}_{js}\}$ is an edge of the $(j,k,s)$-triad.
We now define an auxiliary $m^2$-edge-colored $5$-uniform complete hypergraph $G''$ with vertex set $I_1$;
an edge formed by $i<j<k<s<t$, $i,j,k,s,t\in I_1$ is given a color $(a,b)\in [m]^2$
if the $(i,j,k)$-triad of the original partitioned hypergraph $H$ (i.e., not necessarily in the ``cleaned up'' partitioned hypergraph $H_1$)
contains the edge formed by $\alpha^a_{ij}$, $\beta^{\bullet}_{ijkst}$ and $\gamma_{ik}^0$, and
the $(k,s,t)$-triad of $H$ contains the edge formed by $\alpha^{\bullet}_{ijkst}$, $\beta^b_{st}$ and $\gamma_{kt}^0$.
In case of multiple possible choices, we choose $a$ and $b$ arbitrarily.
The existence of $a\in [m]$ and $b\in [m]$ is guaranteed by the conclusions of Lemmas~\ref{lm:Acover} and~\ref{lm:Bcover},
which have been applied earlier in the proof,
since $\beta^{\bullet}_{ijkst}$ is a right neighbor of $\gamma_{ik}^0$ in the $(i,j,k)$-triad and
$\alpha^{\bullet}_{ijkst}\in A_{kst}$ is a left neighbor of $\gamma_{kt}^0$ in the $(k,s,t)$-triad.
By Ramsey's Theorem, there exists an $n$-element subset $I$ of $I_1$ such that
all edges of $G''$ formed by the elements of $I$ have the same color, say $(a,b)$.
We set $H_0$ to be the $n$-partitioned subhypergraph of $H$ induced by $I$,
$\alpha'_{ij}=\alpha^a_{ij}$ and
$\beta'_{ij}=\beta^b_{ij}$ for all $i<j$, $i,j\in I$.
It follows that the induced $n$-partitioned subhypergraph $H_0$ of $H$ and
the vertices $\alpha_{ij},\beta_{ij},\alpha_{ij}',\beta'_{ij},\gamma^{\bullet}_{ij}\in V_{ij}$, $i<j$, $i,j\in I$, and
the vertices $\beta^{\bullet}_{ijkst}\in V_{jk}$ and $\alpha^{\bullet}_{ijkst}\in V_{ks}$
satisfy the first conclusion of the lemma.
\end{proof}

The final lemma of this subsection concerns left and right neighbors of given pairs top vertices.

\begin{lemma}
\label{lm:gamma12ramsey}
For every $\varepsilon>0$,
there exist $N\in\NN$ such that
the following holds for every $N$-partitioned hypergraph $H$ and
every choice of vertices $\gamma^1_{ij}$, $\gamma^2_{ij}\in V_{ij}$, $i<j$, $i,j\in [N]$. There exists $n\in\NN$ and $k\in [n-2]$ and
an $n$-partitioned subhypergraph $H_0$ with index set $I=\{i_1,\ldots,i_{n}\}\subseteq [N]$, $i_1<\cdots<i_{n}$, such that
\begin{itemize}
\item all triads different from the $(i_k,i_{k+1},i_{k+2})$-triad are the same in $H_0$ and $H$,
\item the degrees of all top vertices in the $(i_k,i_{k+1},i_{k+2})$-triad
      are smaller by at at most $\varepsilon$ compared to their degrees in $H$,
\item every right neighbor of $\gamma^1_{i_k,i_{k+2}}$ in the $(i_k,i_{k+1},i_{k+2})$-triad
      is also a right neighbor of $\gamma^1_{i_j,i_{k+2}}$ in the $(i_j,i_{k+1},i_{k+2})$-triad for some $j<k$, and
\item every left neighbor of $\gamma^2_{i_k,i_{k+2}}$ in the $(i_k,i_{k+1},i_{k+2})$-triad
      is also a left neighbor of $\gamma^2_{i_k,i_j}$ in the $(i_k,i_{k+1},i_j)$-triad for some $j>k+2$.
\end{itemize}
\end{lemma}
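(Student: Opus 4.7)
The plan is a direct double-counting argument; no Ramsey step is needed here. Set $N$ of order $1/\varepsilon$ (concretely, $N=\lceil 10/\varepsilon\rceil+10$ suffices). For each pair $s<t$ in $[N]$, consider the chain of subsets $R_{s,t}(r)\subseteq V_{st}$ consisting of the right neighbors of $\gamma^1_{rt}$ in the $(r,s,t)$-triad, indexed by $r\in[1,s)$. The incremental new parts $R_{s,t}(r)\setminus\bigcup_{r'<r}R_{s,t}(r')$ are pairwise disjoint subsets of $V_{st}$, so their total size is at most $|V_{st}|$; hence at most $\lceil 2/\varepsilon\rceil$ values of $r\in[1,s)$ satisfy $|R_{s,t}(r)\setminus\bigcup_{r'<r}R_{s,t}(r')|>(\varepsilon/2)|V_{st}|$, and we call such $r$ \emph{$\gamma^1$-bad for $(s,t)$}. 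Symmetrically, using the chain $L_{r,s}(t)\subseteq V_{rs}$ of left neighbors of $\gamma^2_{rt}$ indexed by $t\in(s,N]$ in reverse, at most $\lceil 2/\varepsilon\rceil$ values of $t$ are \emph{$\gamma^2$-bad for $(r,s)$}, meaning $|L_{r,s}(t)\setminus\bigcup_{t'>t}L_{r,s}(t')|>(\varepsilon/2)|V_{rs}|$.

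Fix $s=\lceil N/2\rceil$ and consider pairs $(r,t)$ with $2\le r<s<t\le N-1$. There are $(s-2)(N-1-s)=\Omega(N^2)$ candidate pairs, of which at most $(N-1-s)\cdot\lceil 2/\varepsilon\rceil$ are $\gamma^1$-bad (summed over $t$) and at most $(s-2)\cdot\lceil 2/\varepsilon\rceil$ are $\gamma^2$-bad (summed over $r$). For $N$ of the stated order the total bad count is $O(N/\varepsilon)=o(N^2)$, so some pair $(r,t)$ is neither $\gamma^1$-bad for $(s,t)$ nor $\gamma^2$-bad for $(r,s)$.

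Given such a good pair, take $I=[1,r]\cup\{s,t\}\cup[t+1,N]$ and $k=r$; enumerating $I=\{i_1<\cdots<i_n\}$ one checks that $(i_k,i_{k+1},i_{k+2})=(r,s,t)$, $\{i_j:j<k\}=[1,r)$, and $\{i_j:j>k+2\}=[t+1,N]$. Obtain $H_0$ from the induced subhypergraph of $H$ on the index set $I$ by deleting, from the single triad $(r,s,t)$ only, all edges containing $\gamma^1_{rt}$ together with a right neighbor lying outside $\bigcup_{r'\in[1,r)}R_{s,t}(r')$, and all edges containing $\gamma^2_{rt}$ together with a left neighbor lying outside $\bigcup_{t'\in[t+1,N]}L_{r,s}(t')$. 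By the two goodness conditions, each removal affects at most $(\varepsilon/2)|V_{rs}|\cdot|V_{st}|$ edges; hence the degrees of $\gamma^1_{rt}$ and $\gamma^2_{rt}$ each drop by at most $\varepsilon/2$ (by at most $\varepsilon$ in total even if they coincide), the degrees of all other top vertices in this triad are unchanged, and no other triad is modified. The third and fourth bullet conditions of the lemma hold by the very definitions of $\gamma^1$-good and $\gamma^2$-good.

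The only real obstacle is the bookkeeping in assembling $I$ from three intervals: one has to check that the enumeration $i_1<\cdots<i_n$ makes $r,s,t$ consecutive (at positions $k,k+1,k+2$) and that the index sets $\{i_j:j<k\}$ and $\{i_j:j>k+2\}$ in $I$ coincide exactly with $[1,r)$ and $[t+1,N]$, so that the two disjointness observations translate directly into the two containment conditions of the lemma, and also that the degree drops stay within the $\varepsilon$ budget even when $\gamma^1_{rt}=\gamma^2_{rt}$. Everything else is routine counting.
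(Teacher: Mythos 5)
Your proof is correct but takes a genuinely different, more elementary route than the paper. The paper sets $N$ to be $2$ plus the $3$-uniform Ramsey number $R_2^3(3+2/\varepsilon)$, assumes for contradiction that no suitable triple exists, colors triples red or blue depending on which covering chain (right neighbors of the $\gamma^1$'s versus left neighbors of the $\gamma^2$'s) fails badly, invokes Ramsey's theorem to extract a monochromatic clique of size $3+2/\varepsilon$, and derives a contradiction by producing $1+2/\varepsilon$ pairwise-disjoint subsets of a single part, each of density above $\varepsilon/2$. You bypass Ramsey entirely: you observe that along each covering chain the incremental ``new'' neighborhoods are pairwise disjoint within a single part, so for every fixed pair only $O(1/\varepsilon)$ approaching indices can add a chunk of size more than $(\varepsilon/2)$ times the part; fixing the middle index $s$ and union-bounding over the $O(N)$ bad pairs against the $\Omega(N^2)$ candidate pairs $(r,t)$ produces a pair good for both chains as soon as $N = O(1/\varepsilon)$. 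The payoff is a linear rather than Ramsey-tower bound on $N$ and a shorter argument; the paper's Ramsey machinery is overkill for this particular lemma. The remaining steps — assembling $I = [1,r]\cup\{s,t\}\cup[t+1,N]$ so that $r,s,t$ occupy consecutive positions, deleting only those edges through $\gamma^1_{rt}$ and $\gamma^2_{rt}$ whose other endpoint is outside the union of earlier chain members (costing at most $\varepsilon/2$ each to the top-vertex degree, hence at most $\varepsilon$ if $\gamma^1_{rt}=\gamma^2_{rt}$), and noting that all other triads and all other top-vertex degrees are untouched — coincide with the paper's construction and are carried out correctly.
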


\begin{proof}
Without loss of generality, we may assume that $1/\varepsilon$ is an integer and
set $N=N'+2$ where $N'=R_2^3(3+2/\varepsilon)$.

Suppose that there exist $1<a<b<c<N'+2$, $a,b,c\in [N'+2]$, such that
every right neighbor of $\gamma^1_{ac}$ in the $(a,b,c)$-triad of $H'$
with the exception of at most $\varepsilon|V'_{bc}|/2$ vertices of $V'_{bc}$
is also a right neighbor of $\gamma^1_{kc}$ in the $(k,b,c)$-triad for some $k<a$, and
every left neighbor of $\gamma^2_{ac}$ in  the $(a,b,c)$-triad of $H'$,
with the exception of at most $\varepsilon|V'_{ab}|/2$ vertices of $V'_{ab}$,
is also a left neighbor of $\gamma^2_{ak}$ in the $(a,b,k)$-triad for some $k>c$.
We set $I=\{1,\ldots,a-1,a,b,c,c+1,\ldots,N'+2\}$ and $n=|I|=N'+2-(c-a)+2$.
Let $H_0$ be the $n$-partitioned hypergraph obtained
from the $n$-partitioned subhypergraph of $H'$ induced by $I$
by removing from the $(a,b,c)$-triad all edges that contain:
\begin{itemize}
\item $\gamma^1_{ac}$ and a vertex $w\in V'_{bc}$ that
      is not a right neighbor of $\gamma^1_{kc}$ in any $(k,b,c)$-triad for $k<a$, or
\item $\gamma^2_{ac}$ and a vertex $w\in V'_{ab}$ that
      is not a left neighbor of $\gamma^2_{ak}$ in any $(a,b,k)$-triad for $k>c$.
\end{itemize}
Observe that the degree of $\gamma^1_{ac}$ has decreased by at most $\varepsilon/2$ and
the same holds for the degree of $\gamma^2_{ac}$,
unless $\gamma^1_{ac}=\gamma^2_{ac}$, in which case,
the degree of $\gamma^1_{ac}=\gamma^2_{ac}$ has decreased by at most $\varepsilon$.
Hence, the $n$-partitioned hypergraph $H_0$ with $k=a$ satisfies the conclusion of the lemma.

In the rest, we will assume for the sake of contradiction that
there is no triple $1<a<b<c<N'+2$ with the properties given in the previous paragraph.
We construct a $2$-edge-colored $3$-uniform complete hypergraph $G$ with vertex set $\{2,\ldots,N'+1\}$ as follows.
The edge formed by $2\le a<b<c\le N'+1$, is red
if at least $\varepsilon|V'_{bc}|/2$ right neighbors of $\gamma^1_{ac}$ in the $(a,b,c)$-triad
are not right neighbors of any  $\gamma^1_{kc}$ with $k<a$;
otherwise, the edge is colored blue.
Note that, by the assumption made at the beginning of this paragraph,
if the edge formed by $a<b<c$ is blue,
then there are at least $\varepsilon|V'_{ab}|/2$ left neighbors of $\gamma^2_{ac}$ in the $(a,b,c)$-triad that
are not a left neighbor of $\gamma^2_{ak}$ for any $k>c$.

By Ramsey's Theorem, there exist $a_1,\ldots,a_{3+2/\varepsilon}\in\{2,\ldots,N'+1\}$ such that
any triple formed by them has the same color in $G$.
Suppose that the color is blue and for $i=3,\ldots,3+2/\varepsilon$,
let $X_i$ be the set of left neighbors of $\gamma^2_{a_1a_i}$ in the $(a_1,a_2,a_i)$-triad that
are not a left neighbor of $\gamma^2_{a_1a_j}$ for any $j>i$.
The definition of the sets $X_3,\ldots,X_{3+2/\varepsilon}$ implies that
the sets $X_3,\ldots,X_{3+2/\varepsilon}$ are disjoint.
On the other hand, since the triple formed by $a_1$, $a_2$ and $a_i$ is colored blue,
the size of each set $X_i$, $i=3,\ldots,3+2/\varepsilon$ is at least $\varepsilon|V_{a_1a_2}|/2$,
which is impossible since the $1+2/\varepsilon$ sets $X_i$'s are disjoint subsets of $V_{a_1a_2}$.
Hence, the common color cannot be blue.
A symmetric argument yields that the common color cannot be red.
It follows that a triple $a<b<c$ with the properties given in the second paragraph always exists.
\end{proof}

\section{Neighborhoods of top vertices}
\label{sec:optim}

The core of our argument is analyzing neighborhoods of three selected top vertices in triads.
In a partitioned hypergraph, we select using Lemmas \ref{lm:gamma} and \ref{lm:gamma12}
vertices $\gamma_{ik}$, $\gamma^1_{ik}$ and $\gamma^2_{ik}$ such that
\begin{itemize}
\item the vertex $\gamma_{ik}$ is a top vertex of almost maximum degree among top vertices in all $(i,j,k)$-triads with $i<j<k$,
\item the vertex $\gamma^1_{ik}$ is a top vertex of almost maximum degree in all $(i,j,k)$-triads with $i<j<k$
      among top vertices that are left neighbors of some $\gamma_{i\ell}$ with $\ell>k$, and
\item the vertex $\gamma^1_{ik}$ is a top vertex of almost maximum degree in all $(i,j,k)$-triads with $i<j<k$
      among top vertices that are right neighbors of some $\gamma_{\ell{}k}$ with $\ell<i$.
\end{itemize}
We use Lemmas~\ref{lm:AAedges} and~\ref{lm:BBedges} to guarantee that for all $i<j<k<\ell$
every vertex of $V_{jk}$ is either a left neighbor of $\gamma_{j\ell}$, a right neighbor of $\gamma_{ik}$ or neither,
in particular, no vertex of $V_{jk}$ is both a left neighbor of $\gamma_{j\ell}$ and a right neighbor of $\gamma_{ik}$.
The same holds with respect to vertices $\gamma^1_{j\ell}$, $\gamma^1_{ik}$, $\gamma^2_{j\ell}$ and $\gamma^2_{ik}$.
It follows that each vertex of $V_{jk}$ is one of $3^3=27$ types,
depending on its relation to the three selected top vertices in triads.
The following lemma will let us assume that 
the proportion of each of these $27$ types of vertices in $V_{jk}$ is approximately the same in all $V_{jk}$.

\begin{lemma}
\label{lm:gammaABX}
For every $\varepsilon>0$ and $n\in\NN$,
there exists $N\in\NN$ such that
the following holds for every $N$-partitioned hypergraph $H$ and
every choice of vertices $\gamma_{ik},\gamma^1_{ik},\gamma^2_{ik}\in V_{ik}$, $i<k$.

For $i<j<k<\ell$,
let $A_{ijk\ell}\subseteq V_{jk}$ be the set of left neighbors of $\gamma_{j\ell}$ in the $(j,k,\ell)$-triad,
$B_{ijk\ell}\subseteq V_{jk}$ the set of right neighbors of $\gamma_{ik}$ in the $(i,j,k)$-triad, and
$X_{ijk\ell}=V_{jk}\setminus(A_{ijk\ell}\cup B_{ijk\ell})$.
We define analogously $A^1_{ijk\ell}$, $B^1_{ijk\ell}$ and $X^1_{ijk\ell}$ with respect to $\gamma^1_{j\ell}$ and $\gamma^1_{ik}$, and
$A^2_{ijk\ell}$, $B^2_{ijk\ell}$ and $X^2_{ijk\ell}$ with respect to $\gamma^2_{j\ell}$ and $\gamma^2_{ik}$.

There exists an induced $n$-partitioned subhypergraph $H_0$ with the index set $I$ and
reals $d_{\kappa\kappa^1\kappa^2}\in [0,1]$ indexed by $\kappa,\kappa^1,\kappa^2\in\{A,B,X\}$ such that the following holds 
\[\left|\;\frac{\left|\kappa_{ijk\ell}\cap (\kappa^1)^1_{ijk\ell}\cap (\kappa^2)^2_{ijk\ell}\right|}{|V_{jk}|}-d_{\kappa\kappa^1\kappa^2}\right|
  \le\varepsilon\]
for all $\kappa,\kappa^1,\kappa^2\in\{A,B,X\}$ and all $i<j<k<\ell$, $i,j,k,\ell\in I$.

In addition, if the sets $A_{ijk\ell}$ and $B_{ijk\ell}$ are disjoint,
the sets $A^1_{ijk\ell}$ and $B^1_{ijk\ell}$ are disjoint, and
the sets $A^2_{ijk\ell}$ and $B^2_{ijk\ell}$ are disjoint for all $i<j<k<\ell$,
then it holds that
\[\sum_{\kappa,\kappa^1,\kappa^2\in\{A,B,X\}} d_{\kappa\kappa^1\kappa^2} \le 1+27\varepsilon.\]
\end{lemma}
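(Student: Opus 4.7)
The plan is to apply a Ramsey-coloring argument on $4$-tuples of indices. For each quadruple $i<j<k<\ell$ in $[N]$, the sets $A_{ijk\ell}, B_{ijk\ell}, X_{ijk\ell}$ and their superscripted counterparts, all subsets of $V_{jk}$, are determined by the quadruple, and so are the $27$ ratios
\[ r^{ijk\ell}_{\kappa\kappa^1\kappa^2} := \frac{|\kappa_{ijk\ell} \cap (\kappa^1)^1_{ijk\ell} \cap (\kappa^2)^2_{ijk\ell}|}{|V_{jk}|} \in [0,1] \]
for $\kappa,\kappa^1,\kappa^2\in\{A,B,X\}$. Our goal is to pass to an index set on which all these ratios are essentially constant.

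Fix $m = \lceil 1/\varepsilon \rceil$ and set $N = R^4_{(m+1)^{27}}(n)$. Color each quadruple $i<j<k<\ell$ in $[N]$ by the $27$-tuple $\bigl(\lfloor m \cdot r^{ijk\ell}_{\kappa\kappa^1\kappa^2}\rfloor\bigr)_{\kappa,\kappa^1,\kappa^2} \in \{0,1,\ldots,m\}^{27}$. There are at most $(m+1)^{27}$ colors, so by Ramsey's theorem there is an $n$-element subset $I \subseteq [N]$ on which every $4$-tuple receives the same color. Let $H_0$ be the induced $n$-partitioned subhypergraph of $H$ on index set $I$; since induced subhypergraphs do not alter the vertex parts, $|V_{jk}|$ is unchanged. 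For each $(\kappa,\kappa^1,\kappa^2)$ let $c_{\kappa\kappa^1\kappa^2}$ denote the common coordinate of the common color, and set $d_{\kappa\kappa^1\kappa^2} := c_{\kappa\kappa^1\kappa^2}/m$. Then for every $i<j<k<\ell$ in $I$ we have $\bigl|r^{ijk\ell}_{\kappa\kappa^1\kappa^2} - d_{\kappa\kappa^1\kappa^2}\bigr| \le 1/m \le \varepsilon$, which is the first conclusion.

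For the additional claim, observe that when $A_{ijk\ell} \cap B_{ijk\ell} = \emptyset$ (and likewise for the superscripted pairs), the triple $(A_{ijk\ell}, B_{ijk\ell}, X_{ijk\ell})$ partitions $V_{jk}$, and similarly for the versions with superscripts $1$ and $2$. Consequently the $27$ sets $\kappa_{ijk\ell} \cap (\kappa^1)^1_{ijk\ell} \cap (\kappa^2)^2_{ijk\ell}$ partition $V_{jk}$, giving $\sum_{\kappa,\kappa^1,\kappa^2} r^{ijk\ell}_{\kappa\kappa^1\kappa^2} = 1$ for any quadruple in $I$. Combined with $d_{\kappa\kappa^1\kappa^2} \le r^{ijk\ell}_{\kappa\kappa^1\kappa^2} + \varepsilon$, summing over all $27$ triples yields $\sum d_{\kappa\kappa^1\kappa^2} \le 1 + 27\varepsilon$. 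The only design choice is setting up the coloring; once that is done, the argument reduces to a direct application of Ramsey's theorem together with a bookkeeping calculation, so I do not expect a substantive obstacle.
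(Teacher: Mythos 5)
Your proof is correct and takes essentially the same approach as the paper: both color $4$-tuples by a $27$-coordinate discretization of the ratios, apply Ramsey's theorem for $4$-uniform hypergraphs with exponentially many colors, and then derive the additional bound from the partition observation. The only cosmetic difference is that you discretize with $\lfloor m\cdot r\rfloor$ while the paper picks any $\Delta\in[\varepsilon^{-1}]^{27}$ with $|r-\varepsilon\Delta|\le\varepsilon$; your version is slightly more explicit but otherwise identical.
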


\begin{proof}
Without loss of generality, we may assume that $1/\varepsilon$ is an integer.
Set $K=\varepsilon^{-27}$ and $N=R_K^4\!\big(n\big)$.

Let $H$ be an $N$-partitioned hypergraph with properties given in the statement of the lemma.
We construct an auxiliary $K$-edge-colored $4$-uniform complete hypergraph $G$ with vertex set $[N]$ as follows:
the edge formed by $i<j<k<\ell$ is colored by a $27$-tuple $\Delta\in [\varepsilon^{-1}]^{27}$,
whose elements are indexed by $\kappa,\kappa^1,\kappa^2\in\{A,B,X\}$, such that
\[\left|\;\frac{\left|\kappa_{ijk\ell}\cap (\kappa^1)^1_{ijk\ell}\cap (\kappa^2)^2_{ijk\ell}\right|}{|V_{jk}|}-\varepsilon\Delta_{\kappa\kappa^1\kappa^2}\right|
  \le\varepsilon.\]

By Ramsey's Theorem, there exists an $n$-element subset $I\subseteq [N]$ such that
all edges of $G$ formed by vertices of $I$ have the same color, and
let $\Delta\in [\varepsilon^{-1}]^{27}$ be this color.
It follows that the statement of the lemma holds for the $n$-partitioned subhypergraph $H_0$ of $H$ induced by $I$ and
for $d_{\kappa\kappa^1\kappa^2}=\varepsilon\Delta_{\kappa\kappa^1\kappa^2}$ where $\kappa,\kappa^1,\kappa^2\in\{A,B,X\}$.

We next consider that the additional assumption holds.
If the sets $A_{ijk\ell}$ and $B_{ijk\ell}$ are disjoint,
then the sets $A_{ijk\ell}$, $B_{ijk\ell}$ and $X_{ijk\ell}$ partition $V_{jk}$.
Similarly, if $A^1_{ijk\ell}$ and $B^1_{ijk\ell}$ are disjoint,
then $A^1_{ijk\ell}$, $B^1_{ijk\ell}$ and $X^1_{ijk\ell}$ partition $V_{jk}$, and
if $A^2_{ijk\ell}$ and $B^2_{ijk\ell}$ are disjoint,
then $A^2_{ijk\ell}$, $B^2_{ijk\ell}$ and $X^2_{ijk\ell}$ partition $V_{jk}$.
This in turn implies that
the $27$ sets $\kappa_{ijk\ell}\cap (\kappa^1)^1_{ijk\ell}\cap (\kappa^2)^2_{ijk\ell}$ with $\kappa,\kappa^1,\kappa^2\in\{A,B,X\}$
partition the set $V_{jk}$.
Since it holds that
\[d_{\kappa\kappa^1\kappa^2} \le \frac{\left|\kappa_{ijk\ell}\cap (\kappa^1)^1_{ijk\ell}\cap (\kappa^2)^2_{ijk\ell}\right|}{|V_{jk}|}+\varepsilon\]
for every $\kappa,\kappa^1,\kappa^2\in\{A,B,X\}$, and
the sum of the right sides over $\kappa,\kappa^1,\kappa^2\in\{A,B,X\}$ is equal to $1+27\varepsilon$,
the bound on the sum of $d_{\kappa\kappa^1\kappa^2}$ follows.
\end{proof}

We next perform the quantitative analysis of neighborhood sizes of the three selected top vertices in triads in an idealized setting,
where all triads posses exactly the same structure.
The latter means that 
each set $V_{ij}$ has exactly $m$ elements, which we identify with $[m]$, and
if \emph{some} triad contains an edge with a left vertex $a$, a right vertex $b$ and a top vertex $c$,
then \emph{every} triad contains an edge formed by the left vertex $a$, the right vertex $b$ and the top vertex $c$.
While this assumption is not implied by Lemma~\ref{lm:gammaABX},
the analysis performed in Section~\ref{sec:main} will yield a partition subhypergraph such that
all triads have ``approximately the same'' structure,
which will be sufficient for the quantitative analysis performed in this section to apply.
The purpose of considering the idealized setting here is to motivate an optimization problem,
which we describe in the next paragraph.

We start with giving intuition behind the set-up of the problem.
Suppose that $\gamma$ is a top vertex with the maximum degree and
$\gamma_A$ is its left neighbor with the maximum degree when viewed as the top vertex in other triads, and
$\gamma_B$ is its right neighbor with the maximum degree when viewed as the top vertex in other triads.
The vertices $\gamma$, $\gamma_A$ and $\gamma_B$ are ``idealized'' vertices $\gamma_{ik}$, $\gamma^1_{ik}$ and $\gamma^2_{ik}$.
Each vertex can be a left neighbor, a right neighbor or a non-neighbor of $\gamma$ (Lemmas~\ref{lm:AAedges} and~\ref{lm:BBedges} can be used to guarantee that exactly one of these cases occur).
Similarly, each vertex is a left neighbor, a right neighbor or a non-neighbor of $\gamma_A$ and
is a left neighbor, a right neighbor or a non-neighbor of $\gamma_B$.
For each of these 27 possibilities, we set up a variable that measures the proportion of such vertices:
there will be $27$ variables $x_{\kappa\kappa_A\kappa_B}$ where $\kappa,\kappa_A,\kappa_B\in\{A,B,X\}$,
and $\kappa$ determines the relation to $\gamma$, $\kappa_A$ to $\gamma_A$ and $\kappa_B$ to $\gamma_B$;
the values of these $27$ variables are ``approximated'' by the values of $d_{\kappa\kappa_A\kappa_B}$ from Lemma~\ref{lm:gammaABX}.
For example, $x_{ABX}$ is the proportion of vertices that
are a left neighbor of $\gamma$, a right neighbor of $\gamma_A$ and 
neither a left neighbor nor a right neighbor of $\gamma_B$.

The set of top vertices of a triad
can be split into non-neighbors of $\gamma$, left neighbors of $\gamma$ and right neighbors of $\gamma$.
Hence, considering separately edges based on the type of their top vertices,
we conclude that the density of the triad can be upper bounded by the sum of
\begin{itemize}
\item the product of one minus the sum of the left and the right degrees of $\gamma$ with the degree of $\gamma$,
\item the product of the left degree of $\gamma$ and the degree of $\gamma_A$ as the top vertex, and
\item the product of the right degree of $\gamma$ and the degree of $\gamma_B$ as the top vertex.
\end{itemize}
Note that the proportion of non-neighbors of $\gamma$ is equal to $\sum_{\kappa_A\kappa_B}x_{X\kappa_A\kappa_B}$,
the proportion of left neighbors is equal to $\sum_{\kappa_A\kappa_B}x_{A\kappa_A\kappa_B}$ and
the proportion of right neighbors is equal to $\sum_{\kappa_A\kappa_B}x_{B\kappa_A\kappa_B}$.

Consider a left vertex $w$ of the type $\kappa\kappa_A\kappa_B$ and
a right vertex $w'$ of the type $\lambda\lambda_A\lambda_B$.
The vertices $w$ and $w'$ can be contained in the same edge only
if each of the pairs $(\kappa,\lambda)$, $(\kappa_A,\lambda_A)$ and $(\kappa_B,\lambda_B)$
equals one of the following six pairs:
\[(A,B),\, (A,X),\, (B,X),\, (X,A),\, (X,B) \mbox{ and } (X,X)\]
unless the first conclusion of one of  Lemmas~\ref{lm:AAedges}, \ref{lm:BBedges} or~\ref{lm:BAedges}
applies for one of the vertices $\gamma$, $\gamma_A$ and $\gamma_B$ (the lemmas
exclude the pairs $(A,A), (B,B)$ and $(B,A)$, respectively).
In addition, unless the conclusion of Lemma~\ref{lm:gamma12ramsey} applies,
it must also hold that $(\kappa_B,\lambda_A)\not=(A,B)$.
Hence, we will write $\kappa\kappa_A\kappa_B\to\lambda\lambda_A\lambda_B$
if each of the pairs $(\kappa,\lambda)$, $(\kappa_A,\lambda_A)$ and $(\kappa_B,\lambda_B)$
is one of the pairs 
\[(A,B),\, (A,X),\, (B,X),\, (X,A),\, (X,B) \mbox{ and } (X,X),\]
and it also holds that $(\kappa_B,\lambda_A)\not=(A,B)$.
So, in the considered idealized setting, it holds that
\begin{itemize} 
\item the degree of $\gamma$ is at most $\sum_{A\kappa_A\kappa_B\to B\lambda_A\lambda_B}x_{A\kappa_A \kappa_B}x_{B\lambda_A\lambda_B}$,
\item the degree of $\gamma_A$ is at most $\sum_{\kappa A\kappa_B\to \lambda B\lambda_B}x_{\kappa A\kappa_B}x_{\lambda B\lambda_B}$ and
\item the degree of $\gamma_B$ is at most $\sum_{\kappa\kappa_AA\to \lambda\lambda_AB}x_{\kappa\kappa_AA}x_{\lambda\lambda_AB}$.
\end{itemize}
In addition, 
since $\gamma$ is a top vertex of maximum degree,
the degrees of $\gamma_A$ and $\gamma_B$ are bounded by that of $\gamma$,
in particular, they are also upper bounded by the first expression above.

So, we can now bound the density of a triad by the proportion of top vertices of each of the three types multiplied
by the upper bounds on the degrees of $\gamma$, $\gamma_A$ and $\gamma_B$,
i.e., the density of a triad is at most
\begin{align*}
& \left(\sum_{\kappa_A\kappa_B}x_{X\kappa_A\kappa_B}\right)
  \sum_{A\kappa_A\kappa_B\to B\lambda_A\lambda_B}x_{A\kappa_A\kappa_B}x_{B\lambda_A\lambda_B}+\\
& \left(\sum_{\kappa_A\kappa_B}x_{A\kappa_A\kappa_B}\right)
  \min\left\{\sum_{A\kappa_A\kappa_B\to B\lambda_A\lambda_B}x_{A\kappa_A\kappa_B}x_{B\lambda_A\lambda_B},
             \sum_{\kappa A\kappa_B\to \lambda B\lambda_B}x_{\kappa A\kappa_B}x_{\lambda B\lambda_B}\right\}+\\
& \left(\sum_{\kappa_A\kappa_B}x_{B\kappa_A\kappa_B}\right)
  \min\left\{\sum_{A\kappa_A\kappa_B\to B\lambda_A\lambda_B}x_{A\kappa_A\kappa_B}x_{B\lambda_A\lambda_B},
             \sum_{\kappa\kappa_AA\to \lambda\lambda_AB}x_{\kappa\kappa_AA}x_{\lambda\lambda_AB}\right\}.
\end{align*}
The sums above should be interpreted as sums over all possible choices,
in particular, an expression $A\kappa_A\kappa_B\to B\lambda_A\lambda_B$
is the sum over all $32$ choices of $\kappa_A$, $\kappa_B$, $\lambda_A$ and $\lambda_B$
such that $A\kappa_A\kappa_B\to B\lambda_A\lambda_B$.
We now define \optlink{} to be the optimization problem to maximize the above expression
subject to the constraints that all the variables $x_{AAA},\ldots,x_{XXX}$ are non-negative and their sum is one.

The main result of this section is that the optimum value of the problem \optlink{} is $4/27$.
The lower bound follows, for example, from setting $x_{AAX}=2/3$, $x_{BBX}=1/3$ and all other variables to zero.
We will start our analysis of the problem \optlink{} by proving the next lemma,
which allows eliminating the two minima in the definition of the problem \optlink.
We point out here that while we do know that the degree of $\gamma$ is larger than that of $\gamma_A$ and $\gamma_B$,
the expressions appearing in \optlink{} are upper bounds on their degrees and
so it is possible that the first term in the minima appearing the problem can be smaller in principle.
The following lemma asserts that
it is possible to restrict to instances where the first term in the two minima is at least as large as the second;
the corresponding optimization problem with be referred to as \optineq.

\begin{lemma}
\label{lm:nomin}
The optimum value of the problem \optlink{} is equal to the maximum of 
\begin{align}
\begin{split}
& \left(\sum_{\kappa_A\kappa_B}x_{X\kappa_A\kappa_B}\right)
  \sum_{A\kappa_A\kappa_B\to B\lambda_A\lambda_B}x_{A\kappa_A\kappa_B}x_{B\lambda_A\lambda_B}+\\
& \left(\sum_{\kappa_A\kappa_B}x_{A\kappa_A\kappa_B}\right)
             \sum_{\kappa A\kappa_B\to \lambda B\lambda_B}x_{\kappa A\kappa_B}x_{\lambda B\lambda_B}+\\
& \left(\sum_{\kappa_A\kappa_B}x_{B\kappa_A\kappa_B}\right)
             \sum_{\kappa\kappa_AA\to \lambda\lambda_AB}x_{\kappa\kappa_AA}x_{\lambda\lambda_AB}
\end{split}\tag{\optineq}
\end{align}
subject to that all the variables $x_{AAA},\ldots,x_{XXX}$ are non-negative, their sum is one and
\begin{align*}
\sum_{\kappa A\kappa_B\to \lambda B\lambda_B}x_{\kappa A\kappa_B}x_{\lambda B\lambda_B}
& \le \sum_{A\kappa_A\kappa_B\to B\lambda_A\lambda_B}x_{A\kappa_A\kappa_B}x_{B\lambda_A\lambda_B}\mbox{ and}\\
\sum_{\kappa\kappa_AA\to \lambda\lambda_AB}x_{\kappa\kappa_AA}x_{\lambda\lambda_AB}
& \le \sum_{A\kappa_A\kappa_B\to B\lambda_A\lambda_B}x_{A\kappa_A\kappa_B}x_{B\lambda_A\lambda_B}\mbox{.}
\end{align*}
\end{lemma}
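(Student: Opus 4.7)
The plan is to establish both inequalities between $\max(\optlink)$ and $\max(\optineq)$. The easy direction is $\max(\optineq) \le \max(\optlink)$: the \optineq{} feasible set is contained in the \optlink{} feasible set, and the two extra constraints
\[
\sum_{\kappa A\kappa_B\to \lambda B\lambda_B}x_{\kappa A\kappa_B}x_{\lambda B\lambda_B}\le M_1, \qquad \sum_{\kappa\kappa_AA\to \lambda\lambda_AB}x_{\kappa\kappa_AA}x_{\lambda\lambda_AB}\le M_1
\]
(where $M_1$ denotes the $A$--$B$ sum on the first coordinate) force $\min(M_1, M_2) = M_2$ and $\min(M_1, M_3) = M_3$, so the two objective functions coincide on the \optineq{} feasible region. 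Hence the \optineq{} supremum is at most the \optlink{} supremum.

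For the reverse inequality, I would show that for every \optlink-feasible $x$ there is an \optineq-feasible $y$ with $g(y)\ge f(x)$, where $f$ and $g$ denote the two objectives. If $x$ already satisfies both extra inequalities, then $y=x$ works and $g(x)=f(x)$. Otherwise, suppose $M_2(x) > M_1(x)$ (the case $M_3(x) > M_1(x)$ is symmetric, and if both hold the constructions are applied consecutively). The key observation is that, when $M_2 \ge M_1$, the middle term of $f$ is capped at $S_A(x)\,M_1(x)$ and $f$ is insensitive to the distribution of mass on the second-coordinate-$A$ variables as long as $M_2$ stays at least $M_1$. Motivated by this, I would redistribute all mass on variables with second coordinate $A$ onto the corresponding variables with second coordinate $X$, setting
\[
y_{aXc} := x_{aXc}+x_{aAc}, \qquad y_{aAc}:=0, \qquad y_{aBc}:=x_{aBc}.
\]
Under this redistribution $M_2(y) = 0$, so the first \optineq{} inequality is automatic.

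The main obstacle is the combinatorial verification that $M_1(y) \ge M_1(x)$, $M_3(y) \ge M_3(x)$, and consequently $M_3(y) \le M_1(y)$ (possibly after an analogous second-coordinate transfer). A direct computation expanding the sums $M_1$ and $M_3$ and splitting over the six valid coordinate pairs shows that these inequalities hold, because the coordinate pair $(X,e)$ is valid for every $e\in\{A,B,X\}$ while $(A,e)$ is valid only for $e\in\{B,X\}$, and the asymmetric forbidden condition $(\kappa_B,\lambda_A)\ne(A,B)$ is respected by the redistribution; explicitly, the extra contribution $M_1(y)-M_1(x)$ works out to a nonnegative sum of the form $\sum_{(c,f)\text{ valid}}(x_{ABc}+x_{AAc})\,x_{BAf}$. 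The compensation of the lost $S_A(x)M_1(x)$ term in passing from $f$ to $g$ comes by combining these increases with the hypothesis $M_2(x)\ge M_1(x)$, which lower-bounds the new contributions to $M_1(y)$. The argument is entirely elementary but needs careful bookkeeping of which coordinate pairs contribute to each of $M_1$, $M_2$, $M_3$; no further machinery beyond the definitions introduced immediately before the lemma is required.
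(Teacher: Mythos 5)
Your easy direction ($\max(\optineq)\le\max(\optlink)$) is fine and matches the paper. The difficulty is in the reverse direction: your construction is the $\xi=0$ special case of the paper's, and in this extreme form it fails.

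Writing $S_A=\sum_{\kappa_A\kappa_B}x_{A\kappa_A\kappa_B}$, $M_1=\sum_{A\kappa_A\kappa_B\to B\lambda_A\lambda_B}x_{A\kappa_A\kappa_B}x_{B\lambda_A\lambda_B}$, $M_2=\sum_{\kappa A\kappa_B\to \lambda B\lambda_B}x_{\kappa A\kappa_B}x_{\lambda B\lambda_B}$, the problematic term is the middle one. When $M_2(x)>M_1(x)$, the \optlink{} objective $f(x)$ contains $S_A\cdot\min(M_1,M_2)=S_A\cdot M_1$. After your transfer of \emph{all} second-coordinate-$A$ mass to $X$, you correctly note $M_2(y)=0$; but then the middle term of the \optineq{} objective is $S_A\cdot M_2(y)=0$, so you have simply thrown away $S_A M_1$. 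You claim this is compensated by the increases in $M_1$ and $M_3$, but those increases can be zero. Concretely, take
\[
x_{AAX}=a,\quad x_{BBX}=b,\quad x_{XAX}=c,\quad x_{XBX}=d,\quad a+b+c+d=1,\ \ a,b,c,d>0.
\]
Then $M_1(x)=ab$, $M_2(x)=(a+c)(b+d)>ab$, $M_3(x)=0$, giving $f(x)=ab(a+c+d)$. Your $y$ has $y_{AXX}=a$, $y_{BBX}=b$, $y_{XXX}=c$, $y_{XBX}=d$, hence $M_1(y)=ab$, $M_2(y)=M_3(y)=0$, and $g(y)=ab(c+d)=f(x)-a^2b<f(x)$. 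So the claim ``for every \optlink-feasible $x$ there is an \optineq-feasible $y$ with $g(y)\ge f(x)$'' fails for your particular construction of $y$.

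The fix is exactly the scaling the paper introduces: instead of $\xi=0$, set $\xi=M_1/M_2\in(0,1)$ and put $x'_{\kappa A\kappa_B}=\xi x_{\kappa A\kappa_B}$, $x'_{\kappa X\kappa_B}=x_{\kappa X\kappa_B}+(1-\xi)x_{\kappa A\kappa_B}$, $x'_{\kappa B\kappa_B}=x_{\kappa B\kappa_B}$. This makes $M_2'=\xi M_2=M_1$, so $\min(M_1',M_2')=M_1=\min(M_1,M_2)$ is preserved exactly (the middle term does not drop), while one still proves $M_1'\ge M_1$ and $M_3'\ge M_3$ by the same type of calculation you sketch. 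Your bookkeeping of which pairs $(\kappa,\lambda)$ are allowed is indeed how one shows $M_1'\ge M_1$ and $M_3'\ge M_3$; you would additionally need the paper's termination argument (choose among optimal \optlink{} solutions one maximizing $\sum x_{X\kappa_A\kappa_B}+\sum x_{\kappa X\kappa_B}+\sum x_{\kappa\kappa_A X}$, then note the transformation strictly increases this quantity, a contradiction) rather than iterating two one-shot transfers.
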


\begin{proof}
As any feasible solution of the problem \optineq{} is also a feasible solution of the problem \optlink,
the optimum value of the problem \optineq{} is at most that of \optlink.
In the rest, we establish the opposite inequality,
i.e., that the optimum value of the problem \optineq{} is at least that of \optlink.

Let $x_{AAA},\ldots,x_{XXX}$ be a solution of the problem \optlink{} that maximizes the objective function and
among all solutions that maximize the objective function choose the one that
maximizes
\begin{equation}
\sum_{\kappa_A\kappa_B}x_{X\kappa_A\kappa_B}+\sum_{\kappa\kappa_B}x_{\kappa{}X\kappa_B}+\sum_{\kappa\kappa_A}x_{\kappa\kappa_A{}X}.
\label{eq:weighted-xs}
\end{equation}
Note that such a solution exists by compactness.

We claim that $x_{AAA},\ldots,x_{XXX}$ is also a feasible solution of \optineq{}.
If not, then one of the two additional inequalities from the definition of the problem \optineq{} is violated.
We analyze the case that the first inequality is violated; the other case is symmetric.
Let $T$ and $S$ be the left and right sides of the inequality, i.e.,
\begin{equation*}
  S=\sum_{\kappa A\kappa_B\to \lambda B\lambda_B}x_{\kappa A\kappa_B}x_{\lambda B\lambda_B}
  \qquad\mbox{and}\qquad
  T=\sum_{A\kappa_A\kappa_B\to B\lambda_A\lambda_B}x_{A\kappa_A\kappa_B}x_{B\lambda_A\lambda_B}
  \mbox{,}
\end{equation*}  
and let $\xi=T/S$.
As we have assumed that the inequality is violated, it holds that $\xi<1$.
For all $\kappa,\kappa_B\in\{A,B,X\}$,
set 
\[x'_{\kappa A{}\kappa_B}=\xi\cdot x_{\kappa A{}\kappa_B},\quad
  x'_{\kappa B{}\kappa_B}=x_{\kappa B{}\kappa_B}\quad\mbox{and}\quad
  x'_{\kappa X{}\kappa_B}=x_{\kappa X{}\kappa_B}+(1-\xi) x_{\kappa A{}\kappa_B}.\]
We will show that $x'_{AAA},\ldots,x'_{XXX}$ is a solution of the problem \optlink{},
the value of the objective function is at least the value for $x_{AAA},\ldots,x_{XXX}$, and
the expression \eqref{eq:weighted-xs} is larger for $x'_{AAA},\ldots,x'_{XXX}$ than for $x_{AAA},\ldots,x_{XXX}$.
This would contradict the choice of the solution $x_{AAA},\ldots,x_{XXX}$.

We first observe that
\begin{align*}
& \sum_{\kappa_A\kappa_B}x_{A\kappa_A\kappa_B}=\sum_{\kappa_A\kappa_B}x'_{A\kappa_A\kappa_B},\quad
  \sum_{\kappa_A\kappa_B}x_{B\kappa_A\kappa_B}=\sum_{\kappa_A\kappa_B}x'_{B\kappa_A\kappa_B}\quad\mbox{and} & \sum_{\kappa_A\kappa_B}x_{X\kappa_A\kappa_B}=\sum_{\kappa_A\kappa_B}x'_{X\kappa_A\kappa_B}.
\end{align*}
In particular, the sum of the variables $x'_{AAA},\ldots,x'_{XXX}$ is one,
which implies that $x'_{AAA},\ldots,x'_{XXX}$ is a feasible solution of the problem \optlink{}.

We now establish that the value of the objective function for $x'_{AAA},\ldots,x'_{XXX}$
is at least that for $x_{AAA},\ldots,x_{XXX}$,
i.e., the solution $x'_{AAA},\ldots,x'_{XXX}$ is optimal.
We start with analyzing the change of the sums defining $T$ and $S$.
Let $T'$ and $S'$ be the new values of the sums corresponding to $T$ and $S$, respectively.
Fix $(\kappa_B,\lambda_B)$ to be one of the six pairs $(A,B)$, $(A,X)$, $(B,X)$, $(X,A)$, $(X,B)$ and $(X,X)$, and
observe that
\begin{align}
\sum_{\substack{\kappa_A,\lambda_A\\A\kappa_A\kappa_B\to B\lambda_A\lambda_B}} x'_{A\kappa_A\kappa_B}x'_{B\lambda_A\lambda_B}
 & = \sum_{\substack{\kappa_A,\lambda_A\\A\kappa_A\kappa_B\to B\lambda_A\lambda_B}} x_{A\kappa_A\kappa_B}x_{B\lambda_A\lambda_B}\nonumber\\ 
 & + (1-\xi^2)x_{AA\kappa_B}x_{AA\lambda_B}+(1-\xi)x_{AB\kappa_B}x_{AA\lambda_B}\label{eq:partsum}\\
 & \ge \sum_{\substack{\kappa_A,\lambda_A\\A\kappa_A\kappa_B\to B\lambda_A\lambda_B}} x_{A\kappa_A\kappa_B}x_{B\lambda_A\lambda_B}.\nonumber
\end{align}
To see why the equality in \eqref{eq:partsum} holds,
note that the left side of the sum (recall that $\kappa_B$ and $\lambda_B$ is fixed)
is equal to
\[x'_{AA\kappa_B}x'_{BX\lambda_B}+x'_{AX\kappa_B}x'_{BA\lambda_B}+x'_{AX\kappa_B}x'_{BX\lambda_B}+x'_{AB\kappa_B}x'_{BX\lambda_B}\]
if $\kappa_B=A$, or to
\[x'_{AA\kappa_B}x'_{BX\lambda_B}+x'_{AX\kappa_B}x'_{BA\lambda_B}+x'_{AX\kappa_B}x'_{BX\lambda_B}+x'_{AB\kappa_B}x'_{BX\lambda_B}+(x'_{AA\kappa_B}+x'_{AX\kappa_B})x'_{BB\lambda_B},\]
otherwise.
Observe that the additional term in the latter expression has the same value as in the original solution,
i.e., $(x'_{AA\kappa_B}+x'_{AX\kappa_B})x'_{BB\lambda_B}=(x_{AA\kappa_B}+x_{AX\kappa_B})x_{BB\lambda_B}$.
Since the values of $(x_{AA\kappa_B}+x_{AX\kappa_B})(x_{BA\lambda_B}+x_{BX\lambda_B})$ and
$(x'_{AA\kappa_B}+x'_{AX\kappa_B})(x'_{BA\lambda_B}+x'_{BX\lambda_B})$ are the same,
it follows that the sum of the first three terms
has increased by $(1-\xi^2)x_{AA\kappa_B}x_{AA\lambda_B}$ compared to the original solution.
Finally, the difference between $x'_{AB\kappa_B}x'_{BX\lambda_B}$ and $x_{AB\kappa_B}x_{BX\lambda_B}$
is $(1-\xi)x_{AB\kappa_B}x_{AA\lambda_B}$.
Hence, the equality in \eqref{eq:partsum} is now established.
The estimate \eqref{eq:partsum} summed over possible choices of $\kappa_B$ and $\lambda_B$ yields that $T'\ge T$.
Next observe that
\[S'=\sum_{\kappa A\kappa_B\to \lambda B\lambda_B}x'_{\kappa A\kappa_B}x'_{\lambda B\lambda_B}=
    \sum_{\kappa A\kappa_B\to \lambda B\lambda_B}\xi x_{\kappa A\kappa_B}x_{\lambda B\lambda_B}=
    \xi S=T.
  \]
We conclude that it holds that $\min\{T',S'\}=\min\{T,S\}=T$.

We next analyze the first sum from the second minimum in the definition of the problem \optlink{};
let
\[R=\sum_{\kappa\kappa_AA\to \lambda\lambda_AB}x_{\kappa\kappa_AA}x_{\lambda\lambda_AB}\qquad\mbox{and}\qquad
  R'=\sum_{\kappa\kappa_AA\to \lambda\lambda_AB}x'_{\kappa\kappa_AA}x'_{\lambda\lambda_AB}.\]
Fix $(\kappa,\lambda)$ to be one of the six pairs $(A,B)$, $(A,X)$, $(B,X)$, $(X,A)$, $(X,B)$ and $(X,X)$, and
observe (the proof of the equality is analogous to that of \eqref{eq:partsum}) that
\begin{align*}
\sum_{\substack{\kappa_A,\lambda_A\\\kappa\kappa_AA\to\lambda\lambda_AB}} x'_{\kappa\kappa_AA}x'_{\lambda\lambda_AB}
 & = \sum_{\substack{\kappa_A,\lambda_A\\\kappa\kappa_AA\to\lambda\lambda_AB}} x_{\kappa\kappa_AA}x_{\lambda\lambda_AB}\\
 & + (1-\xi^2) x_{\kappa AA}x_{\lambda AB}+(1-\xi)x_{\kappa BA}x_{\lambda AB}\\
 & \ge \sum_{\substack{\kappa_A,\lambda_A\\\kappa\kappa_AA\to\lambda\lambda_AB}} x_{\kappa\kappa_AA}x_{\lambda\lambda_AB}.
\end{align*}
It follows that $R'\ge R$ and so $\min\{R',T'\}\ge\min\{R,T\}$ (recall that we have already established that $T'\ge T$).
Since it holds that $T\le T'$, $\min\{T,S\}=\min\{T',S'\}$, $\min\{R,T\}\le\min\{R',T'\}$ and
\[\sum_{\kappa_A\kappa_B}x_{\kappa\kappa_A\kappa_B}=\sum_{\kappa_A\kappa_B}x'_{\kappa\kappa_A\kappa_B}\]
for every $\kappa\in\{A,B,X\}$,
the value of the objective function for $x'_{AAA},\ldots,x'_{XXX}$
is at least that for $x_{AAA},\ldots,x_{XXX}$,
which implies that $x'_{AAA},\ldots,x'_{XXX}$ is an optimal solution of the problem \optlink.

To complete the proof, we note that the value of \eqref{eq:weighted-xs} for $x'_{AAA},\ldots,x'_{XXX}$
compared to its value for $x_{AAA},\ldots,x_{XXX}$
is larger by precisely $(1-\xi)\sum_{\kappa,\kappa_B}x_{\kappa A\kappa_B}$,
which is positive (otherwise all $x_{\kappa A\kappa_B}$ are equal to $0$, which would imply $S=0$ but $S>T\ge 0$).
This contradicts the choice of $x_{AAA},\ldots,x_{XXX}$.
\end{proof}

Before proving the main theorem of this section, we need to establish the following auxiliary lemma.

\begin{lemma}
\label{lm:strange-ineq}
The following inequality holds for all non-negative reals $a$, $a_1$, $a_2$, $b$, $b_1$, $b_2$, $x$, $x_1$ and $x_2$ such that
$x_1+x_2\le x$, $a_1+a_2\le a,$ $b_1+b_2\le b$ and $x\le \min \{a,b\}$:
\begin{equation}
a_1b_2+a_1x_2+b_1a_2+b_1x_2+x_1a_2+x_1b_2 \le ab+x\max \{a,b\}.\label{eq:strange-ineq}
\end{equation}
\end{lemma}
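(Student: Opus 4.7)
The plan is to reduce the inequality~\eqref{eq:strange-ineq} to a handful of extremal configurations by exploiting monotonicity and linearity of the left-hand side $L$, and then verify each such configuration by a short direct calculation. The main obstacle is bookkeeping rather than any conceptual hurdle, since each of the six variables $a_1, a_2, b_1, b_2, x_1, x_2$ appears in $L$ only with non-negative coefficients, so $L$ is monotone non-decreasing in each of them.

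By this monotonicity, I would first assume that all three constraints are tight: $a_1 + a_2 = a$, $b_1 + b_2 = b$, and $x_1 + x_2 = x$; if any is strict we may increase the smaller of the two summands until equality holds, which only makes $L$ larger. Next, since both sides of~\eqref{eq:strange-ineq} are symmetric under the simultaneous swap $(a, a_1, a_2) \leftrightarrow (b, b_1, b_2)$, I may moreover assume $a \ge b$, so that $\max\{a, b\} = a$ and the bound to beat is $ab + xa$.

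Substituting $a_2 = a - a_1$, $b_2 = b - b_1$ and $x_2 = x - x_1$, the inequality becomes a polynomial inequality in $(a_1, b_1, x_1) \in [0,a] \times [0,b] \times [0,x]$, and a straightforward inspection shows $L$ is affine in $a_1$ for fixed $b_1, x_1$. Hence its maximum on the box occurs at an endpoint $a_1 \in \{0, a\}$, and it suffices to treat these two cases.

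In the case $a_1 = a$ (so $a_2 = 0$), a direct expansion yields
\[L = ab + ax + b_1(x - a) + x_1(b - a) - 2 b_1 x_1,\]
where each of the three correction terms is non-positive thanks to $x \le a$, $b \le a$ and $b_1, x_1 \ge 0$, so $L \le ab + ax$. In the case $a_1 = 0$ (so $a_2 = a$), $L$ turns out to be linear in $b_1$ with slope $a + x - 2 x_1 \ge 0$ (using $x_1 \le x \le a$), so the maximum is at $b_1 = b$, where a similar short computation gives $L = ab + xb + x_1 (a - b) \le ab + xa$ since $x_1 \le x$ and $a \ge b$. These two endpoint cases together cover all extremal configurations and establish~\eqref{eq:strange-ineq}.
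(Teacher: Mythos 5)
Your proof is correct, and the calculations check out: after the shared reductions to tight constraints and $a\ge b$, writing $L$ as a function of $(a_1,b_1,x_1)$ gives $L=a_1 b + a_1 x + a b_1 + b_1 x + a x_1 + b x_1 - 2a_1 b_1 - 2a_1 x_1 - 2 b_1 x_1$, which is indeed affine in $a_1$, and the two endpoint computations go through exactly as you state. The route, however, differs from the paper's. The paper also assumes tight constraints and $b\le a$, but then rewrites the right-hand side as $(a_1+a_2)(b_1+b_2+x_1+x_2)$, cancels common terms to reduce the inequality to $b_1x_2+x_1b_2 \le a_1b_1+a_1x_1+a_2b_2+a_2x_2$, and recognizes this as $(b_2-a_1)(x_1-a_2)+(b_1-a_2)(x_2-a_1)\le 2a_1a_2$, which it establishes by bounding each factor in $[-a_1,a_2]$ or $[-a_2,a_1]$. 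Your approach is an extremal-point argument (repeatedly using that an affine function on an interval is maximized at an endpoint), which is more mechanical and perhaps easier to verify line by line; the paper's argument is shorter and exposes a clean algebraic identity that makes the inequality transparent, at the cost of requiring a less obvious regrouping. Both are valid, roughly comparable in length, and neither has a gap.
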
   

\begin{proof}
First, note that we may assume $a_1+a_2=a, b_1+b_2=b$ and $x_1+x_2=x$ by increasing $a_1$, $b_1$ or $x_1$ if necessary,
since the variables $a_1$, $b_1$ or $x_1$ appear in the left side of \eqref{eq:strange-ineq} only.
By symmetry, we can also assume that $b\le a$.
Hence, the right side of \eqref{eq:strange-ineq} is $ab+ax=(a_1+a_2)(b_1+b_2+x_1+x_2)$.
Expanding and canceling the common terms transforms \eqref{eq:strange-ineq} to
\[ b_1x_2+x_1b_2 \le a_1b_1+a_1x_1+a_2b_2+a_2x_2,\]
which is equivalent to
\[ (b_2-a_1)(x_1-a_2)+(b_1-a_2)(x_2-a_1)\le 2a_1a_2.\]
Using that $b_1+b_2=b\le a= a_1+a_2$ and $0\le b_2$, we obtain that $-a_1\le b_2-a_1\le a_2$, and
similarly, using $x_1+x_2\le a_1+a_2$ and $0\le x_1$, we obtain that $-a_2\le x_1-a_2\le a_1$.
It follows that $(b_2-a_1)(x_1-a_2)\le a_1a_2$.
An analogous argument yields that $(b_1-a_2)(x_2-a_1)\le a_1a_2$,
which completes the proof of the lemma.
\end{proof}

We are now ready to determine the optimum value of the problem \optlink{}.

\begin{theorem}
\label{thm:optim}
The optimum value of the problem \optlink{} is $4/27$.
\end{theorem}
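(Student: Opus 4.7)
The lower bound $4/27$ would be exhibited by the feasible point $x_{AAX}=\tfrac{2}{3}$, $x_{BBX}=\tfrac{1}{3}$, with all other variables zero. I would verify that the only nonzero product in the $T$-sum is $x_{AAX}x_{BBX}$: the pair $(A,B)$ is good in each of the first two coordinates, $(X,X)$ is good in the third, and the cross constraint $(\kappa_B,\lambda_A)=(X,B)\neq(A,B)$ is satisfied, giving $T=\tfrac{2}{9}$. The same product is also the only contributor to the $S$-sum, so $S=\tfrac{2}{9}$, while $R=0$ since no variable has last coordinate $A$. With $p=0$, $q=\tfrac{2}{3}$, $r=\tfrac{1}{3}$ the objective of \optlink{} evaluates to $\tfrac{2}{3}\cdot\tfrac{2}{9}=\tfrac{4}{27}$.

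For the upper bound, the first step would be to invoke Lemma~\ref{lm:nomin}, which reduces the problem to \optineq: maximize $V=pT+qS+rR$ over the probability simplex on $27$ variables subject to the additional constraints $S\le T$ and $R\le T$. The immediate consequence $V\le(p+q+r)T=T\le qr\le\tfrac{1}{4}$ is not strong enough, so a sharper interaction between the three bilinear forms and the coordinate marginals is needed.

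My plan would be to stratify the variables $x_{\kappa\kappa_A\kappa_B}$ according to each of the three coordinates and feed the resulting decomposition of $T$, $S$, and $R$ into Lemma~\ref{lm:strange-ineq}, with the identification $a=q$, $b=r$, $x=p$ (and analogously in the other two coordinate directions) and $a_1,a_2,b_1,b_2,x_1,x_2$ the corresponding partial masses indexed by a second coordinate. Once the good-pair restrictions are used to eliminate the forbidden products, the sum of cross-label products remaining in each bilinear piece matches the left-hand side of Lemma~\ref{lm:strange-ineq}, so that lemma yields inequalities bounding $T$, $S$, and $R$ purely in terms of the nine coordinate marginals.

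Substituting these estimates into $V=pT+qS+rR$ together with $S\le T$ and $R\le T$ reduces the problem to a low-dimensional optimization in the marginals. I would close the argument by case analysis according to which constraints are tight; in each case the maximum turns out to be $\tfrac{4}{27}$, attained essentially uniquely by the configuration from the lower-bound construction, where the final step reduces to the one-variable maximization $\max_{t\in[0,1]}t^2(1-t)=\tfrac{4}{27}$ at $t=\tfrac{2}{3}$. The hardest step, and the main technical obstacle, is to apply Lemma~\ref{lm:strange-ineq} simultaneously in all three coordinate directions while also handling the side condition $x\le\min\{a,b\}$, so that no slack is introduced and the constant remains exactly $\tfrac{4}{27}$ rather than a weaker value.
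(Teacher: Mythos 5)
You have correctly identified the key ingredients: the lower bound witness $x_{AAX}=2/3$, $x_{BBX}=1/3$; the reduction to \optineq{} via Lemma~\ref{lm:nomin}; the essential role of Lemma~\ref{lm:strange-ineq}; and that the endgame is a low-dimensional optimization reminiscent of $\max t^2(1-t)=4/27$. But the central step is left as a plan rather than a proof, and the plan as stated does not match how the argument actually closes. You propose to apply Lemma~\ref{lm:strange-ineq} \emph{simultaneously in all three coordinate directions}, each time taking the six sub-masses to be ``partial masses indexed by a second coordinate.'' That is not what works. The paper invokes Lemma~\ref{lm:strange-ineq} exactly once, not three times, and the quantities $a_1,a_2,b_1,b_2,x_1,x_2$ that get fed into it are \emph{not} marginals over a single coordinate: $a_2$, for instance, is $x_{AAB}+x_{AAX}+x_{AXB}$, a sum over triples whose second entry lies in $\{A,X\}$ \emph{and} whose third entry lies in $\{B,X\}$ but not both $X$, while $a_1$ collects those with second entry $B$ \emph{or} third entry $A$ (so $x_{AXX}$ belongs to neither and $a_1+a_2<a$ is the right inequality). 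This joint dependence on the second and third coordinates is exactly what makes the ten term-by-term inequalities verifying $R'+S'\le a_1b_2+a_1x_2+b_1a_2+b_1x_2+x_1a_2+x_1b_2$ tight, and it cannot be produced by slicing along one coordinate at a time.

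Two further pieces are also missing from your outline. First, before Lemma~\ref{lm:strange-ineq} can even be applied one must peel off the terms in $R$ and $S$ whose first coordinates are both $X$; these do not fit the structure of the lemma and have to be bounded separately by $x^2/4$ each via AM--GM. Your plan as written bounds $T$, $S$, $R$ directly by Lemma~\ref{lm:strange-ineq}, but the lemma is actually used to bound $R'+S'$ (a truncated joint expression), while $T$ is bounded trivially by $T\le ab$. Second, the hypothesis $x\le\min\{a,b\}$ of Lemma~\ref{lm:strange-ineq} does not come for free; the paper first disposes of the complementary cases $x\ge a$ and $x\ge b$ by the cruder bound $(x+a+b)T\le ab\le 1/8<4/27$. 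You flag ``handling the side condition $x\le\min\{a,b\}$'' as the main technical obstacle but do not say how it is resolved; the resolution is this preliminary case split, not part of the simultaneous-application scheme. In short, the reduction via Lemma~\ref{lm:nomin} and the eventual appeal to Lemma~\ref{lm:strange-ineq} are correctly anticipated, but the specific non-coordinate-wise decomposition that makes the lemma applicable, the $x^2/4$ residual estimates, and the $x\ge\min\{a,b\}$ case split constitute the substance of the proof and are not supplied.
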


\begin{proof}
Since the objective value of the problem \optlink{} is $4/27$
for $x_{AAX}=2/3$, $x_{BBX}=1/3$ and the remaining variables equal to zero,
the optimum value of the problem \optlink{} is at least $4/27$.
By Lemma~\ref{lm:nomin},
it is enough to show that the optimum value of the problem \optineq{} is at most $4/27$.

Consider $x_{AAA},\ldots,x_{XXX}$ that maximize the objective value of the problem \optineq.

For the rest of the proof, set
\[ a = \sum_{\kappa_A\kappa_B}x_{A\kappa_A\kappa_B},\quad
   b = \sum_{\kappa_A\kappa_B}x_{B\kappa_A\kappa_B}\quad\mbox{and}\quad
   x = \sum_{\kappa_A\kappa_B}x_{X\kappa_A\kappa_B}.
   \]
Observe that $a+b+x=1$.  
In addition, we use $T$, $R$ and $S$ for
\begin{align*}
T & = \sum_{A\kappa_A\kappa_B\to B\lambda_A\lambda_B}x_{A\kappa_A\kappa_B}x_{B\lambda_A\lambda_B},\quad
R & = \sum_{\kappa A\kappa_B\to \lambda B\lambda_B}x_{\kappa A\kappa_B}x_{\lambda B\lambda_B}\quad\mbox{and}\quad
S & = \sum_{\kappa\kappa_A A\to \lambda\lambda_A B}x_{\kappa\kappa_A A}x_{\lambda\lambda_A B}.
\end{align*}
Note that the value of the objective function of \optineq{} is $xT+aR+bS$ and it holds that $R\le T$ and $S\le T$.
Also note that $T\le ab$.
The rest of the proof is devoted to establishing that $xT+aR+bS$ is at most $4/27$.

We start with showing that we can assume that $x\le a$ and $x\le b$.
If $x\ge a$, then it would hold that
\[xT+aR+bS\le (x+a+b)T=T\le ab\le a(1-x-a)\le a(1-2a)\le 1/8<4/27.\]
Similarly, if $x\ge b$, then $xT+aR+bS<4/27$.
Hence, we will assume that $x\le a$ and $x\le b$ in the rest of the proof.

Our next step is to find suitable upper bounds on $R$ and $S$;
we will give bounds separately for the parts where the first entries are both equal to $X$ and the rest.
So, we define
\[
R' = R - \sum_{XA\kappa_B\to XB\lambda_B}x_{XA\kappa_B}x_{XB\lambda_B}\quad\mbox{and}\quad
S' = S - \sum_{X\kappa_A A\to X\lambda_A B}x_{X\kappa_A A}x_{X\lambda_A B}.
\]
We derive using $pq\le (p+q)^2/4$ that
\begin{align}
R-R' & = x_{XAB}x_{XBX}+x_{XAX}x_{XBA}+x_{XAX}x_{XBB}+x_{XAX}x_{XBX} \nonumber \\
     & \le (x_{XAB}+x_{XAX})(x_{XBA}+x_{XBB}+x_{XBX}) \nonumber \\
     & \le (x_{XAB}+x_{XAX}+x_{XBA}+x_{XBB}+x_{XBX})^2/4 \le x^2/4, \label{eq:RR}
\end{align}
and similarly we derive that
\begin{equation}
S-S' = x_{XAA}x_{XXB}+x_{XBA}x_{XXB}+x_{XXA}x_{XAB}+x_{XXA}x_{XXB} \le x^2/4. \label{eq:SS}
\end{equation}

We next show that
\begin{equation}
R'+S'\le a_1b_2+a_1x_2+b_1a_2+b_1x_2+x_1a_2+x_1b_2, \label{eq:RS}
\end{equation}
where the quantities $a_1$, $a_2$, $b_1$, $b_2$, $x_1$ and $x_2$ (note that $a_1+a_2\le a$, $b_1+b_2\le b$ and $x_1+x_2\le x$) are defined as
\begin{align*}
    a_1&=x_{AAA}+x_{ABA}+x_{ABB}+x_{ABX}+x_{AXA}, \quad
    &
    a_2&=x_{AAB}+x_{AAX}+x_{AXB},
    \\ 
    b_1&=x_{BAA}+x_{BBA}+x_{BBB}+x_{BBX}+x_{BXA},  
    &
    b_2&=x_{BAB}+x_{BAX}+x_{BXB}, 
    \\
    x_1&=x_{XAA}+x_{XBA}+x_{XBB}+x_{XBX}+x_{XXA}, 
    &
    x_2&=x_{XAB}+x_{XAX}+x_{XXB}.
\end{align*}
First observe that the following inequalities hold (note that
$R'$ is equal to the sum of the five sums in the left column and
$S'$ to the sum of the five sums in the right column):
\begin{align*}
    \sum_{AA\kappa_B\to BB\lambda_B}x_{AA\kappa_B}x_{BB\lambda_B}&\le a_2b_1,
   &\sum_{A\kappa_AA\to B\lambda_AB}x_{A\kappa_AA}x_{B\lambda_AB}&\le a_1b_2,\\
    \sum_{AA\kappa_B\to XB\lambda_B}x_{AA\kappa_B}x_{XB\lambda_B}&\le a_2x_1,
   &\sum_{A\kappa_AA\to X\lambda_AB}x_{A\kappa_AA}x_{X\lambda_AB}&\le a_1x_2,\\
    \sum_{BA\kappa_B\to XB\lambda_B}x_{BA\kappa_B}x_{XB\lambda_B}&\le b_2x_1,
   &\sum_{B\kappa_AA\to X\lambda_AB}x_{B\kappa_AA}x_{X\lambda_AB}&\le b_1x_2,\\
    \sum_{XA\kappa_B\to AB\lambda_B}x_{XA\kappa_B}x_{AB\lambda_B}&\le x_2a_1,
   &\sum_{X\kappa_AA\to A\lambda_AB}x_{X\kappa_AA}x_{A\lambda_AB}&\le x_1a_2,\\
    \sum_{XA\kappa_B\to BB\lambda_B}x_{XA\kappa_B}x_{BB\lambda_B}&\le x_2b_1,
   &\sum_{X\kappa_AA\to B\lambda_AB}x_{X\kappa_AA}x_{B\lambda_AB}&\le x_1b_2.
\end{align*}
Each of the ten inequalities can be verified by writing out the terms forming the sum on its left side and terms arising from multiplying the two quantities on the right side.
Another way of verifying the inequalities is using the definition of $\to$ as follows:
consider one of the inequalities on the left side, say the second, and observe that
$\kappa_B$ cannot be equal to $A$ in the sum and so each $x_{AA\kappa_B}$ appears as a term in the sum defining $a_2$, and
each $x_{XB\lambda_B}$ appears as a term in the sum defining $x_1$ (note that the variables in the sums defining $a_1$, $b_1$ and $x_1$
are exactly those with the second entry of the subscript being $B$ or the third entry of the subscript being $A$).
An analogous argument applies to the five inequalities on the right side.
Since no term appears in both the sum defining $R$ and the sum defining $S$ (there is no $\kappa$ and $\lambda$ such that
$\kappa AA\to\lambda BB$), we actually obtain that the following holds:
\begin{align*}
 \sum_{XA\kappa_B\to AB\lambda_B}x_{XA\kappa_B}x_{AB\lambda_B}+
 \sum_{A\kappa_AA\to X\lambda_AB}x_{A\kappa_AA}x_{X\lambda_AB}&\le a_1x_2,\\
 \sum_{XA\kappa_B\to BB\lambda_B}x_{XA\kappa_B}x_{BB\lambda_B}+
 \sum_{B\kappa_AA\to X\lambda_AB}x_{B\kappa_AA}x_{X\lambda_AB}&\le b_1x_2,\\
 \sum_{AA\kappa_B\to XB\lambda_B}x_{AA\kappa_B}x_{XB\lambda_B}+
 \sum_{X\kappa_AA\to A\lambda_AB}x_{X\kappa_AA}x_{A\lambda_AB}&\le x_1a_2,\mbox{ and}\\
 \sum_{BA\kappa_B\to XB\lambda_B}x_{BA\kappa_B}x_{XB\lambda_B}+
 \sum_{X\kappa_AA\to B\lambda_AB}x_{X\kappa_AA}x_{B\lambda_AB}&\le x_1b_2.
\end{align*} 
The inequality \eqref{eq:RS} now follows.
Applying Lemma~\ref{lm:strange-ineq} to the right side of \eqref{eq:RS} yields that
\begin{equation}
R'+S'\le ab+x\cdot\max\{a,b\}. \label{eq:RS2}
\end{equation}
Recall that $R'\le R\le T$, $S'\le S\le T$ and $T\le ab$. It now follows that
\begin{align*}
aR'+bS' &\le\max\{a,b\}ab+\min\{a,b\}\max\{R'+S'-ab,0\}\\
        &\le\max\{a,b\}ab+\min\{a,b\}\max\{a,b\}x=\max\{a,b\}ab+abx.
\end{align*}
Using \eqref{eq:RR} and \eqref{eq:SS}, we obtain that
\[ a(R-R')+b(S-S')\le (a+b)\frac{x^2}{4}. \]
It follows (note that $T\le ab$) that
\begin{equation}
xT+aR+bS \le 2abx+\max\{a,b\}ab+\frac{(a+b)x^2}{4}. \label{eq:TRS}
\end{equation}

To complete the proof, we need to show that the right side of \eqref{eq:TRS}
is at most $4/27$ for all non-negative reals $a$, $b$ and $x$ such that
$a+b+x=1$, $x\le a$ and $x\le b$.
As the right side of \eqref{eq:TRS} is symmetric in $a$ and $b$,
we can also assume that $a\ge b$.
Hence, we need to show that it holds that
\begin{equation}
a^2b+2abx+\frac{(a+b)x^2}{4}\le\frac{4}{27} \label{eq:goal}
\end{equation}
assuming that $0\le x\le b\le a$ and $a+b+x=1$.
The left side of \eqref{eq:goal} can be rewritten as
\begin{align}
a^2b+2abx+\frac{(a+b)x^2}{4} & = b(a+x)^2 +\frac{(a-3b)x^2}{4} 
                             = b(1-b)^2 +\frac{(a-3b)x^2}{4}.\label{eq:goal2}
\end{align}
If $a\le 3b$, then the fraction in \eqref{eq:goal2} is non-positive,
which implies that the left side is at most $b(1-b)^2\le 4/27$.
Hence, we can assume that $a>3b$, which implies that $b<1/4$.
It follows that the term $b(1-b)^2$ is less than $9/64$ and that
\[\frac{(a-3b)x^2}{4} =
  \frac{(1-4b-x)x^2}{4} \le
  \frac{(1-5x)x^2}{4} \le \frac{1}{675}.\]
It follows that the right side of \eqref{eq:goal2} is less than $9/64+1/675<4/27$,
which establishes \eqref{eq:goal}.
\end{proof}

\section{Main result}
\label{sec:main}

We are now ready to prove our main structural result.

\begin{theorem}
\label{thm:base}
For every $\delta>0$ and every $n\in\NN$, there exists $N\in\NN$ such that
every $N$-partitioned hypergraph $H$ with density at least $4/27+\delta$ 
has a $4n$-partitioned subhypergraph $H_0$ with index set $I=\{i_1,\ldots,i_{4n}\}$, $i_1<i_2<\cdots<i_{4n}$, such that
one of the following holds.
\begin{itemize}
\item There exist vertices $\alpha_{ij},\beta_{ij},\gamma_{ij}^0,\omega_{ij},\alpha^{\omega}_{ij},\gamma^{\omega}_{ij}\in V_{ij}$ for all $i<j$, $i,j\in I$, such that
      the following holds for all $i<j<k$, $i,j,k\in I$:
      \begin{itemize}
      \item $\{\alpha_{ij},\beta_{jk},\gamma_{ik}^0\}$ is an edge,
      \item $\{\omega_{ij},\beta_{jk},\gamma^{\omega}_{ik}\}$ is an edge, and
      \item $\{\alpha^{\omega}_{ij},\omega_{jk},\gamma_{ik}^0\}$ is an edge.
      \end{itemize}
\item There exist vertices $\alpha_{ij},\beta_{ij},\gamma_{ij}^0,\omega_{ij},\beta^{\omega}_{ij},\gamma^{\omega}_{ij}\in V_{ij}$ for all $i<j$, $i,j\in I$, such that
      the following holds for all $i<j<k$, $i,j,k\in I$:
      \begin{itemize}
      \item $\{\alpha_{ij},\beta_{jk},\gamma_{ik}^0\}$ is an edge,
      \item $\{\alpha_{ij},\omega_{jk},\gamma^{\omega}_{ik}\}$ is an edge, and
      \item $\{\omega_{ij},\beta^{\omega}_{jk},\gamma_{ik}^0\}$ is an edge.
      \end{itemize}

\item There exist vertices $\alpha_{ij},\beta_{ij},\alpha'_{ij},\beta'_{ij},\gamma_{ij},\gamma^{\bullet}_{ij}\in V_{ij}$ for all $i<j$, $i,j\in I$, and
      $\beta^{\bullet}_{ijkst}\in V_{jk}$ and $\alpha^{\bullet}_{ijkst}\in V_{ks}$ for all $i<j<k<s<t$, $i,j,k,s,t\in I$, such that
      the following holds for all $i<j<k<s<t$, $i,j,k,s,t\in I$:
      \begin{itemize}
      \item $\{\alpha_{ij},\beta_{jk},\gamma_{ik}^0\}$ is an edge in the $(i,j,k)$-triad,
      \item $\{\alpha'_{ij},\beta^{\bullet}_{ijkst},\gamma_{ik}^0\}$ is an edge in the $(i,j,k)$-triad,
      \item $\{\beta^{\bullet}_{ijkst},\alpha^{\bullet}_{ijkst},\gamma^{\bullet}_{js}\}$ is an edge in the $(j,k,s)$-triad, and
      \item $\{\alpha^{\bullet}_{ijkst},\beta'_{st},\gamma_{kt}^0\}$ is an edge in the $(k,s,t)$-triad.
      \end{itemize}
\item There exist vertices $\alpha^*_{ij}, \beta^*_{ij}, \gamma_{ij},\alpha^1_{ij},\beta^1_{ij},\gamma^1_{ij},\alpha^2_{ij},\beta^2_{ij},\gamma^2_{ij}\in V_{ij}$ for all $i<j$, $i,j\in I$, such that
      the following holds for all $i<j<k$, $i,j,k\in I$:
      \begin{itemize}
      \item $\{\gamma^1_{ij},\beta^*_{jk},\gamma_{ik}\}$ is an edge,
      \item $\{\alpha^*_{ij},\gamma^2_{jk},\gamma_{ik}\}$ is an edge,
      \item $\{\alpha^1_{ij},\beta^1_{jk},\gamma^1_{ik}\}$ is an edge, and
      \item $\{\alpha^2_{ij},\beta^2_{jk},\gamma^2_{ik}\}$ is an edge.
      \end{itemize}
      In addition,
      there exist $\alpha\in V_{i_{2n}i_{2n+1}}$ and $\beta\in V_{i_{2n+1}i_{2n+2}}$ such that
      the vertex $\alpha$ is a left neighbor of $\gamma^2_{i_{2n}i_{3n+1}}$,
      the vertex $\beta$ is a right neighbor of $\gamma^1_{i_ni_{2n+2}}$, and
      the $(i_{2n},i_{2n+1},i_{2n+2})$-triad contains an edge containing $\alpha$ and $\beta$.
\end{itemize}
\end{theorem}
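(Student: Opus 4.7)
The plan is a cascade of applications of the structural lemmas from Section~\ref{sec:specific}. We begin by applying Lemma~\ref{lm:gamma} to obtain near-maximum-degree top vertices $\gamma_{ik}$, and then Lemma~\ref{lm:gamma12} to extract $\alpha^*_{ij},\beta^*_{ij},\gamma^1_{ik},\gamma^2_{ik}$ with the edges and extremal properties needed for case~4 of the theorem. Lemma~\ref{lm:extend0ab}, applied once with top vertices $\gamma^1_{ik}$ and once with $\gamma^2_{ik}$, then produces $\alpha^1_{ij},\beta^1_{ij}$ and $\alpha^2_{ij},\beta^2_{ij}$ realising the edges $\{\alpha^1_{ij},\beta^1_{jk},\gamma^1_{ik}\}$ and $\{\alpha^2_{ij},\beta^2_{jk},\gamma^2_{ik}\}$; the minimum-degree hypothesis of that lemma is met because $\gamma^1_{ij}$ and $\gamma^2_{ij}$ were chosen as maximum-degree top vertices among the relevant neighbour classes in Lemma~\ref{lm:gamma12}.

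Next, apply Lemmas~\ref{lm:AAedges}, \ref{lm:BBedges}, and~\ref{lm:BAedges} in turn, each with $\gamma^0:=\gamma$. If any returns its first conclusion we are done, landing in case~1, case~2, or case~3 of the theorem respectively. Otherwise, the three second conclusions together forbid, in every triad, any edge whose left and right vertex types relative to $\gamma$ match the pattern $(B,B)$, $(A,A)$, or $(B,A)$, i.e.\ precisely the three pairs excluded from the relation $\to$ underlying~\optlink. Now apply Lemma~\ref{lm:gamma12ramsey} to obtain a distinguished triad $(i_k,i_{k+1},i_{k+2})$, and then Lemma~\ref{lm:gammaABX}, whose disjointness hypothesis is supplied by the outputs of the previous three lemmas, to stabilise the type proportions $d_{\kappa\kappa^1\kappa^2}$ across all triads. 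By inflating the Ramsey parameters at each step and passing to a suitable $4n$-element subset of the current index set, arrange that the distinguished triad occupies positions $(2n,2n+1,2n+2)$ and that the ``some $j<k$'' and ``some $j>k+2$'' witnesses guaranteed by Lemma~\ref{lm:gamma12ramsey} are uniform and correspond to positions $n$ and $3n+1$ respectively.

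Finally, assume for contradiction that case~4 fails, i.e.\ no $\alpha\in V_{i_{2n}i_{2n+1}}$ and $\beta\in V_{i_{2n+1}i_{2n+2}}$ as described exist. By the uniform witness promotion of Lemma~\ref{lm:gamma12ramsey} this means the special triad contains no edge with $(\kappa_B,\lambda_A)=(A,B)$; by Lemma~\ref{lm:gammaABX} the same is approximately true in every triad. Combined with the earlier three exclusions, every triad's edge-type distribution satisfies all four constraints defining the feasible region of~\optlink, so Theorem~\ref{thm:optim} bounds the density of the special triad by $4/27+O(\varepsilon)$. Chosen small enough with respect to $\delta$, this contradicts the lower bound $4/27+\delta-O(\varepsilon)$ on its density in the final subhypergraph, forcing case~4. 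The main obstacle is the second paragraph: coordinating the parameters of the successive Ramsey extractions so that Lemma~\ref{lm:gamma12ramsey}'s existential witnesses are pinned to the exact uniform positions~$n$ and~$3n+1$, while keeping all accumulated vertex assignments $\alpha^*,\beta^*,\gamma,\alpha^1,\beta^1,\gamma^1,\alpha^2,\beta^2,\gamma^2$ consistent across every subhypergraph reduction.
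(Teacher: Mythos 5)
Your outline follows the overall architecture of the paper's proof---Lemma~\ref{lm:gamma}, Lemma~\ref{lm:gamma12}, the three ``edges'' lemmas, Lemma~\ref{lm:gamma12ramsey}, Lemma~\ref{lm:gammaABX}, and a final appeal to Theorem~\ref{thm:optim}---but two steps go genuinely wrong.

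First, you apply Lemmas~\ref{lm:AAedges}, \ref{lm:BBedges} and~\ref{lm:BAedges} only with $\gamma^0=\gamma$. The paper applies each of them \emph{three} times, once with each of $\gamma^0=\gamma$, $\gamma^0=\gamma^1$ and $\gamma^0=\gamma^2$. This is not redundancy: the ``in addition'' bound $\sum d_{\kappa\kappa^1\kappa^2}\le 1+27\varepsilon$ in Lemma~\ref{lm:gammaABX} requires that $A^1_{ijk\ell},B^1_{ijk\ell}$ and $A^2_{ijk\ell},B^2_{ijk\ell}$ are disjoint, which is exactly the $(A,A)$-exclusion for $\gamma^1$ and $\gamma^2$, and the final degree bounds for $\gamma^1_{j\ell}$ and $\gamma^2_{j\ell}$ in the distinguished triad rest on the exclusion of $(A,A)$, $(B,B)$, $(B,A)$ patterns in the superscripted coordinates as well. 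Running only the $\gamma^0=\gamma$ round gives you disjointness and exclusion in the first coordinate only, which is not enough to certify the \optlink{} constraints on the other two coordinates.

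Second, your justification for applying Lemma~\ref{lm:extend0ab} to $\gamma^1_{ik}$ and $\gamma^2_{ik}$---that ``the minimum-degree hypothesis is met because $\gamma^1_{ij}$ and $\gamma^2_{ij}$ were chosen as maximum-degree top vertices among the relevant neighbour classes''---does not hold: being extremal within a restricted class gives no lower bound on the degree (the entire class could be trivial). The paper handles exactly this obstruction with the auxiliary $3$-edge-colored $3$-uniform hypergraph $G$, whose red and orange colors isolate the case of small degree of $\gamma^1_{ik}$ or $\gamma^2_{ik}$; only in the blue case is Lemma~\ref{lm:extend0ab} available, and in the red/orange cases the argument instead deletes the low-degree vertex's edges and re-does the density analysis without it. You also apply Lemma~\ref{lm:extend0ab} before the ``edges'' lemmas, while the paper applies it after the cleanup steps and after gammaABX; your ordering is not merely cosmetic because the degrees used at the point of application need to have been controlled by the earlier removals. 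Supplying the missing case split (and the missing two rounds of edges-lemma applications) is required before the final contradiction via Theorem~\ref{thm:optim} can be drawn.
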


\begin{proof} 
Assume that the statement of the theorem is false for some $\delta>0$ and $n\in\NN$,
which we fix for the rest of the proof;
without loss of generality, we will assume that $\delta\le 1/27$.
We will show that the assumption that the statement is false for $\delta$ and $n$ implies that 
the problem \optlink{} has a solution of value larger than $4/27$,
which is impossible by Theorem~\ref{thm:optim}.

Fix a positive $\varepsilon\le\delta/250$ and define $n_1,\ldots,n_{11}$ as follows.
First, apply Lemma~\ref{lm:gamma12ramsey} with $\varepsilon^2$ to get $N$ and set $n_1=(N+1)n$.
Next, apply Lemma~\ref{lm:extend0ab} with $\varepsilon^2$ and $n_1$ (as the value of $n$) to get $n_2$, and
then again with $\varepsilon^2$ and $n_2$ to get $n_3$.
Set $n_4=R_3^3(n_3)$ and
apply Lemma~\ref{lm:gammaABX} with $\varepsilon$ and $n_4$ to get $n_5$.
We apply Lemma~\ref{lm:BAedges} with $\varepsilon$ and $n_5$ to get $n_5'$,
then Lemma~\ref{lm:BBedges} with $\varepsilon$ and $n_5'$ to get $n_{5}''$, and
eventually Lemma~\ref{lm:AAedges} with $\varepsilon$ and $n_{5}''$ to get $n_{5'''}$.
Yet again we apply Lemmas~\ref{lm:BAedges},~\ref{lm:BBedges} and~\ref{lm:AAedges} starting with $\varepsilon$ and $n_{5}'''$ to obtain $n_{6}$.
We next apply Lemma~\ref{lm:BAedges} with $\varepsilon$ and $n_6$ to get $n_7$,
then Lemma~\ref{lm:BBedges} with $\varepsilon$ and $n_7$ to get $n_{8}$, and
eventually Lemma~\ref{lm:AAedges} with $\varepsilon$ and $n_{8}$ to get $n_{9}$.
Finally, we apply Lemma~\ref{lm:gamma12} with $\varepsilon$ and $n_{9}$ to get $n_{10}$ and
Lemma~\ref{lm:gamma} with $\varepsilon$ and $n_{10}$ to get $n_{11}$.

Since the statement of the theorem is false for $\delta$ and $n$,
there exists an $n_{11}$-partitioned hypergraph $H$ with density $d=4/27+\delta$ such that
neither of the four conclusions of the theorem holds.
Fix such an $n_{11}$-partitioned hypergraph $H$ for the rest of the proof.

We first apply Lemma~\ref{lm:gamma} with the $n_{11}$-partitioned hypergraph $H$
to get an $n_{10}$-partitioned subhypergraph $H_{10}$ of $H$ with index set $I_{10}$ and
vertices $\gamma_{ik}\in V_{ik}$, $i<k$, $i,k\in I_{10}$, such that
\begin{itemize}
\item the density of $H_{10}$ is at least $d-\varepsilon$, and
\item the degree of each top vertex of an $(i,j,k)$-triad
      is at most that of $\gamma_{ik}$ for all $i<j<k$, $i,j,k\in I_{10}$.
\end{itemize}
Note that the degrees of the vertices $\gamma_{ik}$ in each $(i,j,k)$-triad are at least $d-\varepsilon\ge 4/27+\delta-\varepsilon\ge 5\delta-\varepsilon\ge 1224\varepsilon$.
Therefore, we can now apply Lemma~\ref{lm:gamma12} with $H_{10}$ and vertices $\gamma_{ik}$, $i<k$, $i,k\in I_{10}$,
to get an $n_{9}$-partitioned subhypergraph $H_{9}$ of $H_{10}$ with index set $I_{9}\subseteq I_{10}$ and
vertices $\alpha^*_{ik},\beta^*_{ik},\gamma^1_{ik},\gamma^2_{ik}\in V_{ik}$, $i<k$, $i,k\in I_{9}$, such that the density of $H_{9}$ is at least $d-2\varepsilon$,
\begin{itemize}
\item the degree of each top vertex of an $(i,j,k)$-triad
      is larger by at most $\varepsilon$ than that of $\gamma_{ik}$ for all $i<j<k$, $i,j,k\in I_{9}$,
\item $\{\gamma^1_{ij},\beta^*_{jk}, \gamma_{ik}\}$ and  $\{\alpha^*_{ij},\gamma^2_{jk}, \gamma_{ik}\}$ are edges in the $(i,j,k)$-triad for all $i<j<k$, $i,j,k\in I_{9}$,
\item for all $i<\ell<j<k$, $i,j,k,\ell\in I_{9},$
      among left neighbors of $\gamma_{ik}$ in the $(i,j,k)$-triad, $\gamma^1_{ij}$ has the largest degree in the $(i,\ell,j)$-triad of $H_{9}$, and
\item for all $i<j<\ell<k$, $i,j,k,\ell\in I_{9}$,
      among right neighbors of $\gamma_{ik}$ in the $(i,j,k)$-triad, $\gamma^2_{jk}$ has the largest degree in the $(j,\ell,k)$-triad of $H_{9}$.
\end{itemize}

As the next step, we apply Lemma~\ref{lm:AAedges} with the $n_{9}$-partitioned hypergraph $H_{9}$ and
the vertices $\gamma_{ik}^0=\gamma_{ik}$, for all $i<k$, $i,k\in I_{9}$.
Since the hypergraph $H$ does not satisfy the first conclusion of the theorem,
which matches the first outcome of Lemma~\ref{lm:AAedges},
we conclude that its second outcome must hold and
hence obtain an $n_{8}$-partitioned subhypergraph $H_{8}$ of $H_{9}$.
We next apply Lemma~\ref{lm:BBedges} with the $n_{8}$-partitioned hypergraph $H_{8}$ and
the vertices $\gamma_{ik}^0=\gamma_{ik}$, for all $i<k$, $i,k\in I_{8}$.
Since the hypergraph $H$ does not satisfy the second conclusion of the theorem,
which matches the first outcome of Lemma~\ref{lm:BBedges},
we conclude that its second outcome must hold and
hence obtain an $n_{7}$-partitioned subhypergraph $H_{7}$ of $H_{8}$.
Next, we apply Lemma~\ref{lm:BAedges} with the $n_{7}$-partitioned hypergraph $H_{7}$ and
the vertices $\gamma_{ik}^0=\gamma_{ik}$, for all $i<k$, $i,k\in I_{7}$.
Since the hypergraph $H$ does not satisfy the third conclusion of the theorem,
which matches the first outcome of Lemma~\ref{lm:BAedges},
we conclude that its second outcome must hold and
hence obtain an $n_{6}$-partitioned subhypergraph $H_{6}$ of $H_{7}$.
We now apply Lemmas~\ref{lm:AAedges},~\ref{lm:BBedges} and~\ref{lm:BAedges} in the same way
but with $\gamma_{ik}^0=\gamma_{ik}^1$ instead of $\gamma_{ik}^0=\gamma_{ik}$,
then once again with $\gamma_{ik}^0=\gamma_{ik}^2$.
After these six additional steps,
we obtain an $n_5$-partitioned subhypergraph $H_{5}$ of $H_{6}$,
which satisfies the second conclusions of each of Lemmas~\ref{lm:AAedges},~\ref{lm:BBedges} and~\ref{lm:BAedges}
for each of the choices $\gamma_{ik}^0=\gamma_{ik}$, $\gamma_{ik}^0=\gamma_{ik}^1$ and $\gamma_{ik}^0=\gamma_{ik}^2$
for all $i<k, i,k \in I_5$.
Note that in each of the nine applications of the three lemmas,
the density can drop by at most $\varepsilon$.
Hence, the density of $H_5$ is at least $d-11\varepsilon$ and the following holds for all $i<j<k$, $i,j,k\in I_5$:
\begin{itemize}
\item the degree of each top vertex of an $(i,j,k)$-triad
      is larger by at most $10\varepsilon$ than that of $\gamma_{ik}$,
\item for every $\ell\in I_5$, $i<\ell<j$,
      every left neighbor of $\gamma_{ik}$ in the $(i,j,k)$-triad has degree in the $(i,\ell,j)$-triad
      by at most $9\varepsilon$ larger than that of $\gamma^1_{ij}$,
\item for every $\ell\in I_5$, $j<\ell<k$,
      every right neighbor of $\gamma_{ik}$ in the $(i,j,k)$-triad has degree in the $(j,\ell,k)$-triad
      by at most $9\varepsilon$ larger than that of $\gamma^2_{ij}$, and
\item for every $\ell\in I_5$, $\ell>k$, and
      each of the three choices $(\gamma_{ik}^0,\gamma_{j\ell}^0)$ as $(\gamma_{ik},\gamma_{j\ell})$,
      $(\gamma_{ik}^1,\gamma_{j\ell}^1)$, and $(\gamma_{ik}^2,\gamma_{j\ell}^2)$, it holds that
      \begin{itemize}
      \item the $(j,k,\ell)$-triad has no edge
	    containing both a right neighbor of $\gamma^0_{ik}$ (in the $(i,j,k)$-triad) and
	    a right neighbor of $\gamma^0_{j\ell}$ (in the $(j,k,\ell)$-triad),
      \item the $(i,j,k)$-triad has no edge
            containing a left neighbor of $\gamma^0_{ik}$ (in the $(i,j,k)$-triad) and
	    a left neighbor of $\gamma^0_{j\ell}$ (in the $(j,k,\ell)$-triad), and
      \item no right neighbor of $\gamma_{ik}^0$ in the $(i,j,k)$-triad
            is a left neighbor of $\gamma_{j\ell}^0$ in the $(j,\ell,k)$-triad,
      \end{itemize}
\item for every $s,t\in I_5$, $k<s<t$, and
      each of the three choices $(\gamma_{ik}^0,\gamma_{kt}^0)$ as $(\gamma_{ik},\gamma_{kt})$,
      $(\gamma_{ik}^1,\gamma_{kt}^1)$, and $(\gamma_{ik}^2,\gamma_{kt}^2)$, it holds that
      \begin{itemize}
      \item the $(j,k,s)$-triad has no edge containing
            a right neighbor of $\gamma_{ik}^0$ (in the $(i,j,k)$-triad) and
            a left neighbor of $\gamma_{kt}^0$ (in the $(k,s,t)$-triad).
      \end{itemize}
\end{itemize}
The first three properties above hold as in each of the applications of Lemmas~\ref{lm:AAedges}, \ref{lm:BBedges} or \ref{lm:BAedges},
the degree of each of the vertices $\gamma_{ik},\gamma_{ik}^1$ and $\gamma^2_{ik}$ is decreased by at most $\varepsilon$.
The fourth property is ensured by Lemmas~\ref{lm:AAedges} and~\ref{lm:BBedges} (the third point of the fourth property follows from either of the first two), and 
the fifth property by Lemma~\ref{lm:BAedges}.

We now iterate the following for all $i<j<k, i,j,k\in I_5$ as long as possible:
if there is a vertex $w\in V_{ij} \cup V_{jk}$ such that
its codegree with $\gamma_{ik}$ is less than $\varepsilon$, delete all edges containing $w$ and $\gamma_{ik}$,
if there is a vertex $w\in V_{ij} \cup V_{jk}$ such that
its codegree with $\gamma^1_{ik}$ is less than $\varepsilon$, delete all edges containing $w$ and $\gamma^1_{ik}$, and
if there is a vertex $w\in V_{ij} \cup V_{jk}$ such that
its codegree with $\gamma^2_{ik}$ is less than $\varepsilon$, delete all edges containing $w$ and $\gamma^2_{ik}$.
Note that the degree of each of the vertices $\gamma_{ik}$, $\gamma_{ik}^1$ and $\gamma_{ik}^2$
has descreased by at most $2\varepsilon$ as the above procedure applies for each vertex $w\in V_{ij} \cup V_{jk}$ and
each of the vertices $\gamma_{ik}$, $\gamma_{ik}^1$ and $\gamma_{ik}^2$ at most once.
The resulting $n_5$-partitioned hypergraph is denoted by $H'_5$;
note that the density of $H'_5$ is at least $d-13\varepsilon$.

We now apply Lemma~\ref{lm:gammaABX} to the $n_5$-partitioned hypergraph $H_5'$ and
the vertices $\gamma_{ik}$, $\gamma^1_{ik}$ and $\gamma^2_{ik}$, $i,k\in I_5$,
to get an $n_4$-partitioned subhypergraph $H_4$ of $H_5'$ \emph{induced} by $I_4\subseteq I_5$ and
reals $d_{\kappa\kappa^1\kappa^2}$ for $\kappa,\kappa^1,\kappa^2\in\{A,B,X\}$ as in the statement of Lemma~\ref{lm:gammaABX}.
Since the sets $A_{ijk\ell}$ and $B_{ijk\ell}$ from the statement of Lemma~\ref{lm:gammaABX} are disjoint (this is implied by the third part of the fourth property of $H_5$ stated above),
we obtain that the sum of the $27$ reals $d_{\kappa\kappa^1\kappa^2}$ is at most $1+27\varepsilon\le 251/250$.

We now construct an auxiliary $3$-edge-colored $3$-uniform complete hypergraph $G$ with vertex set $I_4$;
an edge formed by $i<j<k$, $i,j,k\in I_4$,
is colored (choose the first option that applies) 
\begin{itemize} 
\item red if the degree of $\gamma^1_{ik}$ in the $(i,j,k)$-triad of $H_4$ is less than $\varepsilon^2$,
\item orange if the degree of $\gamma^2_{ik}$ in the $(i,j,k)$-triad of $H_4$ is less than $\varepsilon^2$ and
\item blue, otherwise.
\end{itemize}
By Ramsey's Theorem, there exist $I_3\subseteq I_4$ of size $n_3$ such that
all edges of $G$ formed by the elements of $I_3$ have the same color.

We first consider the case that all the edges formed by elements of $I_3$ are blue,
i.e., the degree of both $\gamma^1_{ik}$ and $\gamma^2_{ik}$ in each $(i,j,k)$-triad of
the $n_3$-partitioned subhypergraph of $H_4$ induced by $I_3$ is at least $\varepsilon^2$.
We now apply Lemma~\ref{lm:extend0ab} to the $n_3$-partitioned subhypergraph of $H_4$ induced by $I_3$ and
the vertices $\gamma^1_{ik}$, $i<k$, $i,k\in I_3$, to get an $n_2$-element subset $I_2\subseteq I_3$ and
vertices $\alpha^1_{ij}$ and $\beta^1_{ij}$, $i<j$, $i,j\in I_2$, such that
$\{\alpha^1_{ij},\beta^1_{jk},\gamma^1_{ik}\}$ is an edge for all $i<j<k$, $i,j,k\in I_2$.
We next apply Lemma~\ref{lm:extend0ab} to the $n_2$-partitioned subhypergraph of $H_4$ induced by $I_2$ and
the vertices $\gamma^2_{ik}$, $i<k$, $i,k\in I_2$, to get an $n_1$-element subset $I_1\subseteq I_2$ and
vertices $\alpha^2_{ij}$ and $\beta^2_{ij}$, $i<j$, $i,j\in I_1$, such that
$\{\alpha^2_{ij},\beta^2_{jk},\gamma^2_{ik}\}$ is an edge for all $i<j<k$, $i,j,k\in I_1$.
Let $j_1,\ldots,j_{n_1}$ be the indices contained in $I_1$ listed in the increasing order.

Let $J$ be the $N$-element set containing the indices $j_{kn}$ for $k\in [N]$ (recall that $n_1=(N+1)n$);
recall that $J\subseteq I_1\subseteq I_4$.
We apply Lemma~\ref{lm:gamma12ramsey} with $\varepsilon^2>0$ to the $N$-partitioned subhypergraph of $H_4$ induced by $J$
together with $\gamma^1_{ij}$ and $\gamma^2_{ij}$, $i<j$, $i,j\in J$,
to get $m\in\NN$, $k\in [m-2]$ and 
an $m$-partitioned subhypergraph $H'$ of $H_4$ with index set $\{j'_1,\ldots,j'_m\}\subseteq J$
with the properties given in the statement of the lemma;
in particular, the density of $H'$ is at least $d-14\varepsilon$. Also note that $2\le k\le m-3$. 

Suppose that the $(j'_k,j'_{k+1},j'_{k+2})$-triad of $H'$ has
an edge containing
a left neighbor $\alpha$ of $\gamma^2_{j'_k,j'_{k+2}}$ and
a right neighbor $\beta$ of $\gamma^1_{j'_k,j'_{k+2}}$.
By Lemma~\ref{lm:gamma12ramsey},
there exists $k'<k$ such that $\beta$ is a right neighbor of $\gamma^1_{j'_{k'},j'_{k+2}}$ and
there exists $k''>k+2$ such that $\alpha$ is a left neighbor of $\gamma^2_{j'_k,j'_{k''}}$.
We now set 
\begin{itemize}
\item $i_n=j'_{k'}$, $i_{2n}=j'_k$, $i_{2n+1}=j'_{k+1}$, $i_{2n+2}=j'_{k+2}$, $i_{3n+1}=j'_{k''}$,
\item $i_1,\ldots,i_{n-1}$ to be any elements of $I_1$ smaller than $j'_{k'}$,
\item $i_{n+1},\ldots,i_{2n-1}$ to be any elements of $I_1$ between $j'_{k'}$ and $j'_k$,
\item $i_{2n+3},\ldots,i_{3n}$ to be any elements of $I_1$ between $j'_{k+2}$ and $j'_{k''}$, and
\item $i_{3n+2},\ldots,i_{4n}$ to be any elements of $I_1$ larger than $j'_{k''}$.
\end{itemize}
Observe that the set $I=\{i_1,\ldots,i_{4n}\}$,
the vertices $\alpha$ and $\beta$, and
the vertices $\alpha^*_{ij},\beta^*_{ij},\gamma_{ij},\alpha^1_{ij},\beta^1_{ij},\gamma^1_{ij},\alpha^2_{ij},\beta^2_{ij},$ $\gamma^2_{ij}\in V_{ij}$, $i<j$, $i,j\in I$,
satisfy the last conclusion of the theorem,
which is impossible by the choice of $H$ (note that while some of the required edges may not be present in $H_4$,
they do exist in the original hypergraph $H$).  
Since $H$ cannot satisfy the last conclusion of the theorem,
the $(j'_k,j'_{k+1},j'_{k+2})$-triad of $H'$ has
no edge containing both a left neighbor of $\gamma^2_{j'_k,j'_{k+2}}$ and
a right neighbor of $\gamma^1_{j'_k,j'_{k+2}}$.

The rest of the proof concerns the $(j'_k,j'_{k+1},j'_{k+2})$-triad of $H'$.
For convenience, we set $i=j'_{k-1}$, $j=j'_k$, $k=j'_{k+1}$, $\ell=j'_{k+2}$ and $s=j'_{k+3}$ as
we only need to deal with these five indices, and
we write $T^1$ for the set of the top vertices in the $(j,k,\ell)$-triad that are left neighbors of $\gamma_{js}$ in the $(j,\ell,s)$-triad and
$T^2$ for the set of the top vertices that are right neighbors of $\gamma_{i\ell}$ in the $(i,j,\ell)$-triad.
We now summarize the properties of the $(j,k,\ell)$-triad of $H'$ that we use further.
\begin{itemize}
\item The density of the $(j,k,\ell)$-triad is at least $d-14\varepsilon$.
\item The degree of any top vertex of the $(j,k,\ell)$ triad is larger by at most $13\varepsilon$ than that of $\gamma_{j\ell}$.
\item The degree of any top vertex contained in $T^1$ is larger by at most $12\varepsilon$ than that of $\gamma^1_{j\ell}$.
\item The degree of any top vertex contained in $T^2$ is larger by at most $12\varepsilon$ than that of $\gamma^2_{j\ell}$.
\item The sets $T^1$ and $T^2$ are disjoint.
\item There is no edge containing both a left neighbor of $\gamma^2_{j\ell}$ and
      a right neighbor of $\gamma^1_{j\ell}$.
\end{itemize}
The fifth property of the $(j,k,\ell)$-triad follows
from the third part of the fourth property of the partitioned hypergraph $H_5$ stated earlier in the proof, and
the last property follows from Lemma~\ref{lm:gamma12ramsey} and the arguments given in the previous paragraph.
In addition, Lemma~\ref{lm:gammaABX} used to construct the partitioned hypergraph $H_4$ implies that
\begin{equation}
  \left|\frac{|T^1|}{|V_{j\ell}|}-\sum_{\kappa^1\kappa^2}d_{A\kappa^1\kappa^2}\right|\le 9\varepsilon
  \qquad\mbox{and}\qquad
  \left|\frac{|T^2|}{|V_{j\ell}|}-\sum_{\kappa^1\kappa^2}d_{B\kappa^1\kappa^2}\right|\le 9\varepsilon,\label{eq:T12}
\end{equation}
where the sums range over all choices of $\kappa^1$ and $\kappa^2$ in $\{A,B,X\}$.
Note that the conclusion of Lemma~\ref{lm:gammaABX} is used for the indices $i<j<\ell<s$ and
Lemma~\ref{lm:gamma12ramsey} does not cause removal of any edges from either the $(i,j,\ell)$-triad or $(j,\ell,s)$-triad.

Before proceeding further,
we define the $H'$ and the indices $i<j<k<\ell<s$ to be used in the analysis in the case
when all the edges of $G$ formed by the elements of $I_3$ are red or orange.
In such case,
we choose the indices $i<j<k<\ell<s$ to be any elements contained in $I_3$.
The $5$-partitioned hypergraph $H'$ is obtained from the $5$-partitioned subhypergraph of $H_4$ induced by $\{i,j,k,\ell,s\}$
by removing all edges of the $(j,k,\ell)$-triad containing $\gamma^1_{j\ell}$
in case the common color of all the edges formed by elements of $I_3$ is red, and
by removing all edges of the $(j,k,\ell)$-triad containing $\gamma^2_{j\ell}$
in case the common color is orange.
We again use $T^1$ for the set of the top vertices in the $(j,k,\ell)$-triad that
are left neighbors of $\gamma_{js}$ in the $(j,k,s)$-triad, and
$T^2$ for the set of the top vertices that
are right neighbors of $\gamma_{i\ell}$ in the $(i,j,\ell)$-triad.
Note that $T^1$ and $T^2$ satisfy \eqref{eq:T12} as only the edges of the $(j,k,\ell)$-triad were deleted, and
the $(j,k,\ell)$-triad satisfies the same six properties given in the case of the common color being blue.
In particular, the final property holds since one of $\gamma_{j\ell}^1$ and $\gamma_{j\ell}^2$ is contained in no edges at all.

We now continue the analysis jointly for all three cases of the common color of the edges in $G$ formed by $I_3$;
we will use only the six properties of the $(j,k,s)$-triad of $H'$ given above and the properties of $H_5$.
Let $H''$ be the $5$-partitioned subhypergraph of $H_4$ induced by $\{i,j,k,\ell,s\}$;
we use the $5$-partitioned hypergraph $H''$ for the definition of quantities in this paragraph.
Let $L_A$ be the set of left neighbors of $\gamma_{j\ell}$ in the $(j,k,\ell)$-triad,
$L_B$ the set of right neighbors of $\gamma_{ik}$ in the $(i,j,k)$-triad and
$L_X=V_{jk}\setminus (L_A\cup L_B)$ (note that the sets $L_A$ and $L_B$
are be disjoint by the third part of the fourth property of $H_5$).
Similarly, $L^1_A$ is the set of left neighbors of $\gamma^1_{j\ell}$ in the $(j,k,\ell)$-triad,
$L^1_B$ the set of right neighbors of $\gamma^1_{ik}$ in the $(i,j,k)$-triad and
$L^1_X=V_{jk}\setminus (L^1_A\cup L^1_B)$, and
$L^2_A$ is the set of left neighbors of $\gamma^2_{j\ell}$ in the $(j,k,\ell)$-triad,
$L^2_B$ the set of right neighbors of $\gamma^2_{ik}$ in the $(i,j,k)$-triad and
$L^2_X=V_{jk}\setminus (L^2_A\cup L^2_B)$ (again, the sets $L^1_A$ and $L^1_B$ are disjoint, and
the sets $L^2_A$ and $L^2_B$ are disjoint by the third part of the fourth property of $H_5$).
For $\kappa,\kappa^1,\kappa^2\in\{A,B,X\}$, we define
\[L_{\kappa\kappa^1\kappa^2}=L_\kappa\cap L^1_{\kappa^1}\cap L^2_{\kappa^2}.\]
Symmetrically, $R_A$ is the set of left neighbors of $\gamma_{ks}$ in the $(k,\ell,s)$-triad,
$R_B$ is the set of right neighbors of $\gamma_{j\ell}$ in the $(j,k,\ell)$-triad, and
$R_X=V_{k\ell}\setminus (R_A\cup R_B)$,
$R^1_A$ is the set of left neighbors of $\gamma^1_{ks}$ in the $(k,\ell,s)$-triad,
$R^1_B$ is the set of right neighbors of $\gamma^1_{j\ell}$ in the $(j,k,\ell)$-triad, and
$R^1_X=V_{k\ell}\setminus (R^1_A\cup R^1_B)$, and
$R^2_A$ is the set of left neighbors of $\gamma^2_{ks}$ in the $(k,\ell,s)$-triad,
$R^2_B$ is the set of right neighbors of $\gamma^2_{j\ell}$ in the $(j,k,\ell)$-triad, and
$R^2_X=V_{k\ell}\setminus (R^2_A\cup R^2_B)$, we set
\[R_{\kappa\kappa^1\kappa^2}=R_\kappa\cap R^1_{\kappa^1}\cap R^2_{\kappa^2}.\]
Since $H''$ is an induced subhypergraph of $H_4$,
Lemma~\ref{lm:gammaABX} yields that
\[\left|\frac{|L_{\kappa\kappa^1\kappa^2}|}{|V_{jk}|}-d_{\kappa\kappa^1\kappa^2}\right|\le \varepsilon
  \qquad\mbox{and}\qquad
  \left|\frac{|R_{\kappa\kappa^1\kappa^2}|}{|V_{k\ell}|}-d_{\kappa\kappa^1\kappa^2}\right|\le \varepsilon\]
for all $\kappa,\kappa^1,\kappa^2\in\{A,B,X\}$.

Next, we claim that the number of edges of $H''$ containing $\gamma_{j\ell}$, $\gamma^1_{j\ell}$ and $\gamma^2_{j\ell}$
in the $(j,k,\ell)$-triad is by at most $3\varepsilon |V_{jk}||V_{k\ell}|$ larger than
\[\sum_{A\kappa\kappa'\to B\lambda\lambda'}|L_{A\kappa\kappa'}|\cdot|R_{B\lambda\lambda'}|\:,\quad\quad
  \sum_{\kappa A\kappa'\to \lambda B\lambda'}|L_{\kappa A\kappa'}|\cdot|R_{\lambda B\lambda'}|\quad\quad \mbox{and} \quad\quad 
  \sum_{\kappa\kappa'A\to \lambda\lambda'B}|L_{\kappa\kappa'B}|\cdot|R_{\lambda\lambda'B}|,\]
respectively, where $\to$ denotes the relation used to define the problem \optlink{} and
the sums range over all choices of $\kappa,\kappa',\lambda,\lambda'\in\{A,B,X\}$ satisfying the relation $\to$.
Indeed, there are no edges containing a vertex in both $L_{\kappa\kappa'\kappa''}$ and $R_{\lambda\lambda'\lambda''}$
whenever $(\kappa,\lambda),(\kappa',\lambda')$ or $(\kappa'',\lambda'')$ is one of the pairs $(A,A)$, $(B,B)$ or $(B,A)$;
this follows from the first and second parts of the fourth property and the fifth property of $H_5$.
We next show that the edges that contain both a vertex in $L_{\kappa\kappa'A}$ and a vertex in $R_{\lambda B\lambda''}$
contribute at most $3\varepsilon |V_{jk}||V_{k\ell}|$ to each of the degrees.
Let us consider an edge of the $(j,k,\ell)$-triad of $H''$ that
contains a left neighbor $\alpha$ of $\gamma^2_{j\ell}$ and a right neighbor $\beta$ of $\gamma^1_{j\ell}$.
Since an edge with these properties is not present in $H'$,
one of the following had to have happened:
the edge does not exist in $H'$,
$\alpha$ is not a left of $\gamma^2_{j\ell}$ in $H'$, or
$\beta$ is not a right neighbor of $\gamma^1_{j\ell}$ in $H'$.
At most $\varepsilon^2|V_{jk}||V_{k\ell}|$ edges are present in $H''$ but not in $H'$ by Lemma~\ref{lm:gamma12ramsey}.
At most $\varepsilon|V_{jk}|$ vertices of $V_{jk}$ are left neighbors of $\gamma_{j\ell}^2$ in $H''$ but not in $H'$
since the codegree of $\gamma_{j\ell}^2$ and each left neighbor of $\gamma_{j\ell}^2$ in $H''$ is at least $\varepsilon$ and
the degree of $\gamma_{j\ell}^2$ in the $(j,k,\ell)$-triad in $H'$
is smaller by at most $\varepsilon^2$ compared to its degree in $H''$.
An analogous argument yields that
at most $\varepsilon|V_{k\ell}|$ vertices of $V_{k\ell}$ are right neighbors of $\gamma^1_{j\ell}$ in $H''$ but not in $H'$.
In total,
there are at most $(2\varepsilon+\varepsilon^2)|V_{jk}||V_{k\ell}| \leq 3\varepsilon |V_{jk}||V_{k\ell}|$ edges that
contain a left neighbor of $\gamma^2_{j\ell}$ and a right neighbor of $\gamma^1_{j\ell}$.

It follows that each of the degrees of $\gamma_{j\ell}$, $\gamma^1_{j\ell}$ and $\gamma^2_{j\ell}$ in $H''$
is larger by at most $3\varepsilon+32\cdot (2\varepsilon)=67\varepsilon$ (note that each sum has at most $32$ summands) than
\[\sum_{A\kappa\kappa'\to B\lambda\lambda'}d_{A\kappa\kappa'}d_{B\lambda\lambda'}\:,\quad\quad
  \sum_{\kappa A\kappa'\to \lambda B\lambda'}d_{\kappa A\kappa'}d_{\lambda B\lambda'}\quad\quad\mbox{and}\quad\quad
  \sum_{\kappa\kappa'A\to \lambda\lambda'B}d_{\kappa\kappa'A}d_{\lambda\lambda'B},\]
where the sums range over all choices of $\kappa,\kappa',\lambda,\lambda'\in\{A,B,X\}$ satisfying the relation $\to$.
Hence, the degree of every top vertex of the $(j,k,\ell)$-triad is at most
\[\left(\sum_{A\kappa\kappa'\to B\lambda\lambda'}d_{A\kappa\kappa'}d_{B\lambda\lambda'}\right)+80\varepsilon,\]
the degree of every top vertex contained in $T^1$ is at most
\[\left(\sum_{\kappa A\kappa'\to \lambda B\lambda'}d_{\kappa A\kappa'}d_{\lambda B\lambda'}\right)+80\varepsilon\]
and the degree of every top vertex contained in $T^2$ is at most
\[\left(\sum_{\kappa\kappa'A\to \lambda\lambda'B}d_{\kappa\kappa'B}d_{\lambda\lambda'B}\right)+80\varepsilon,\]
where all three sums range over all choices of $\kappa,\kappa',\lambda,\lambda'\in\{A,B,X\}$ satisfying the relation $\to$.
We obtain that
\begin{align*}
& \left(1-\frac{|T^1|+|T^2|}{|V_{j\ell}|}\right)
  \sum_{A\kappa_A\kappa_B\to B\lambda_A\lambda_B}d_{A\kappa_A\kappa_B}d_{B\lambda_A\lambda_B}+\\
& \frac{|T^1|}{|V_{j\ell}|}
  \min\left\{\sum_{A\kappa_A\kappa_B\to B\lambda_A\lambda_B}d_{A\kappa_A\kappa_B}d_{B\lambda_A\lambda_B},
             \sum_{\kappa A\kappa_B\to \lambda B\lambda_B}d_{\kappa A\kappa_B}d_{\lambda B\lambda_B}\right\}+\\
& \frac{|T^2|}{|V_{j\ell}|}
  \min\left\{\sum_{A\kappa_A\kappa_B\to B\lambda_A\lambda_B}d_{A\kappa_A\kappa_B}d_{B\lambda_A\lambda_B},
             \sum_{\kappa\kappa_AA\to \lambda\lambda_AB}d_{\kappa\kappa_AA}d_{\lambda\lambda_AB}\right\}
\end{align*}
is the density of the $(j,k,\ell)$-triad decreased by at most $80\varepsilon$,
i.e., the sum displayed above is at least $d-94\varepsilon$.
Using \eqref{eq:T12}, we obtain that the value of
\begin{align*}
& \left(\sum_{\kappa_A\kappa_B}d_{X\kappa_A\kappa_B}\right)
  \sum_{A\kappa_A\kappa_B\to B\lambda_A\lambda_B}d_{A\kappa_A\kappa_B}d_{B\lambda_A\lambda_B}+\\
& \left(\sum_{\kappa_A\kappa_B}d_{A\kappa_A\kappa_B}\right)
  \min\left\{\sum_{A\kappa_A\kappa_B\to B\lambda_A\lambda_B}d_{A\kappa_A\kappa_B}d_{B\lambda_A\lambda_B},
             \sum_{\kappa A\kappa_B\to \lambda B\lambda_B}d_{\kappa A\kappa_B}d_{\lambda B\lambda_B}\right\}+\\
& \left(\sum_{\kappa_A\kappa_B}d_{B\kappa_A\kappa_B}\right)
  \min\left\{\sum_{A\kappa_A\kappa_B\to B\lambda_A\lambda_B}d_{A\kappa_A\kappa_B}d_{B\lambda_A\lambda_B},
             \sum_{\kappa\kappa_AA\to \lambda\lambda_AB}d_{\kappa\kappa_AA}d_{\lambda\lambda_AB}\right\}
\end{align*}
is at least $d-130\varepsilon$ (in each of the three products forming the whole sum,
the left sum changes by at most $18\varepsilon$ and
the right sum is at most $(1+27\varepsilon)^2\le 2$).
Next note that the (non-negative) reals $d_{\kappa\kappa_A\kappa_B}$ do not need to sum to one,
i.e., they do not need to form a solution of the problem \optlink.
Let $S$ be the sum of the $27$ variables $d_{\kappa\kappa_A\kappa_B}$, and
observe that the sum $S$ is at least $1-27\varepsilon$ and at most $1+27\varepsilon$.
Hence, $x_{\kappa\kappa_A\kappa_B}=d_{\kappa\kappa_A\kappa_B}/S$ is a feasible solution for \optlink{} and
its value is at least (note that $d=4/27+\delta\le 5/27$ and $\varepsilon\le\delta/250$),
\[(d-130\varepsilon)/(1+27\varepsilon)^3\ge(d-130\varepsilon)(1-27\varepsilon)^3\ge d-(130+81+1)\varepsilon>4/27,\]
which is impossible by Theorem~\ref{thm:optim}.
The proof of the theorem is now complete.
\end{proof}

The next theorem differs from Theorem~\ref{thm:base} only by swapping the roles of $\gamma^1_{ij}$ and $\gamma^2_{ij}$
with respect to the vertices $\alpha$ and $\beta$ in the last case of its statement.
The proof is completely the same as that of Theorem~\ref{thm:base} except the following:
\begin{itemize}
\item We use Lemma~\ref{lm:gamma12ramsey} with roles of $\gamma^1_{ij}$ and $\gamma^2_{ij}$ swapped,
      which leads to the conclusion that the $(j,k,\ell)$-triad
      has no edge containing both a left neighbor of $\gamma^1_{j\ell}$ and a right neighbor of $\gamma^2_{j\ell}$ (rather
      than no edge containing both a left neighbor of $\gamma^2_{j\ell}$ and a right neighbor of $\gamma^1_{j\ell}$ as
      in the proof of Theorem~\ref{thm:base}).
\item When setting the problem \optlink, we swap the second and third coordinates, in particular,
      we eventually set $x_{\kappa\kappa_A\kappa_B}=d_{\kappa\kappa_B\kappa_A}/S$.
      This corresponds to the absence of edges containing a vertex of $L^1_A$ and a vertex of $R^2_B$ (while we have established
      the absence of edges containing a vertex of $L^2_A$ and a vertex of $R^1_B$ in the course of the proof of Theorem~\ref{thm:base}).
\end{itemize}

We omit the proof of the next theorem as the rest of the proof is literally the same as that of Theorem~\ref{thm:base}.

\begin{theorem}
\label{thm:base-swap}
Theorem \ref{thm:base} also holds with the last case replaced by the following.
\begin{itemize}
\item There exist vertices $\alpha^*_{ij}, \beta^*_{ij}, \gamma_{ij},\alpha^1_{ij},\beta^1_{ij},\gamma^1_{ij},\alpha^2_{ij},\beta^2_{ij},\gamma^2_{ij}\in V_{ij}$ for all $i<j$, $i,j\in I$, such that
      the following holds for all $i<j<k$, $i,j,k\in I$:
      \begin{itemize}
      \item $\{\gamma^1_{ij},\beta^*_{jk},\gamma_{ik}\}$ is an edge,
      \item $\{\alpha^*_{ij},\gamma^2_{jk},\gamma_{ik}\}$ is an edge,
      \item $\{\alpha^1_{ij},\beta^1_{jk},\gamma^1_{ik}\}$ is an edge, and
      \item $\{\alpha^2_{ij},\beta^2_{jk},\gamma^2_{ik}\}$ is an edge.
      \end{itemize}
      In addition,
      there exist $\alpha\in V_{i_{2n}i_{2n+1}}$ and $\beta\in V_{i_{2n+1}i_{2n+2}}$ such that
      the vertex $\alpha$ is a left neighbor of $\gamma^1_{i_{2n}i_{3n+1}}$,
      the vertex $\beta$ is a right neighbor of $\gamma^2_{i_ni_{2n+2}}$, and
      the $(i_{2n},i_{2n+1},i_{2n+2})$-triad contains an edge containing $\alpha$ and $\beta$.
\end{itemize}
\end{theorem}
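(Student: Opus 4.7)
The plan is to rerun the proof of Theorem~\ref{thm:base} essentially verbatim, introducing two coordinated modifications that swap the roles played by $\gamma^1_{ij}$ and $\gamma^2_{ij}$ in the final, ``embedding'' case. All the preliminary reductions go through unchanged: starting from an $N$-partitioned hypergraph $H$ of density $4/27+\delta$ and a sufficiently small $\varepsilon$, I would successively apply Lemma~\ref{lm:gamma}, Lemma~\ref{lm:gamma12}, and then the three pairs of applications of Lemmas~\ref{lm:AAedges},~\ref{lm:BBedges},~\ref{lm:BAedges} (once with $\gamma^0_{ik}=\gamma_{ik}$, once with $\gamma^0_{ik}=\gamma^1_{ik}$, once with $\gamma^0_{ik}=\gamma^2_{ik}$) to reach a ``clean'' partitioned hypergraph $H_5'$ whose density is at least $d-13\varepsilon$ and all of whose top vertices $\gamma_{ik},\gamma^1_{ik},\gamma^2_{ik}$ have the required neighborhood structure. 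Then Lemma~\ref{lm:gammaABX} yields the hypergraph $H_4$ and the $27$ constants $d_{\kappa\kappa^1\kappa^2}$. Ramsey + Lemma~\ref{lm:extend0ab} again give us the vertices $\alpha^i_{ij},\beta^i_{ij}$ for $i=1,2$ on an index set $I_1$ of size $n_1$ (the case where some triad has $\gamma^1_{ik}$ or $\gamma^2_{ik}$ of tiny degree is handled by the same ``color-degenerate'' branch as before).

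The one genuinely different step is the application of Lemma~\ref{lm:gamma12ramsey}: I would feed in the vertices with their labels swapped, i.e.\ pass $\gamma^2_{ij}$ where the original proof passed $\gamma^1_{ij}$ and vice versa. The output is still a consecutive triple of indices $(j,k,\ell)$ for which every right neighbor of the ``new $\gamma^1$'' (our $\gamma^2$) and every left neighbor of the ``new $\gamma^2$'' (our $\gamma^1$) can be backward- and forward-extended. If the $(j,k,\ell)$-triad carries an edge whose left endpoint is a left neighbor of $\gamma^1_{j\ell}$ and whose right endpoint is a right neighbor of $\gamma^2_{j\ell}$, then the extension argument produces the desired vertex $\alpha$ (a left neighbor of $\gamma^1_{i_{2n}i_{3n+1}}$) and $\beta$ (a right neighbor of $\gamma^2_{i_n i_{2n+2}}$), together with a padding of $I_1$ to an index set $I=\{i_1,\ldots,i_{4n}\}$; this delivers the new last outcome. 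Otherwise, the $(j,k,\ell)$-triad has \emph{no} edge containing simultaneously a left neighbor of $\gamma^1_{j\ell}$ and a right neighbor of $\gamma^2_{j\ell}$, and we can proceed to the contradiction step.

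The closing optimization argument is again literally the same except for a relabeling of coordinates. Recall that in the original proof, the forbidden pair of local types was $(L^2_A, R^1_B)$, which corresponded to the asymmetric clause $(\kappa_B,\lambda_A)\neq(A,B)$ in the definition of $\to$. Under our swap the forbidden pair becomes $(L^1_A, R^2_B)$, which is precisely the constraint that arises if we transpose the second and third indices of the variables of \optlink. Accordingly I would set $x_{\kappa\kappa_A\kappa_B}=d_{\kappa\kappa_B\kappa_A}/S$, where $S=\sum d_{\kappa\kappa_A\kappa_B}\in[1-27\varepsilon,\,1+27\varepsilon]$. The same counting estimates (bounding the three top-vertex degrees by the three sums $\sum d_{A\cdot\cdot}d_{B\cdot\cdot}$, $\sum d_{\cdot A\cdot}d_{\cdot B\cdot}$, $\sum d_{\cdot\cdot A}d_{\cdot\cdot B}$ up to an additive $80\varepsilon$) then show that the objective value of \optlink{} at this point is at least $d-(130+81+1)\varepsilon>4/27$, contradicting Theorem~\ref{thm:optim}. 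The only non-routine bookkeeping is to verify that the coordinate swap preserves the relation $\to$; this is clear because transposing the $A$- and $B$-marker coordinates exchanges the two permitted positions of the asymmetric constraint, which is exactly how the new Lemma~\ref{lm:gamma12ramsey} information enters. I expect this last verification, together with keeping the degree-bound accounting consistent after the swap, to be the most delicate part, but it is purely notational.
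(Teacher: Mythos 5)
Your proposal matches the paper's own argument exactly: the paper also reruns Theorem~\ref{thm:base}'s proof verbatim, swaps the roles of $\gamma^1_{ij}$ and $\gamma^2_{ij}$ in the application of Lemma~\ref{lm:gamma12ramsey}, and correspondingly transposes the second and third coordinates when passing to \optlink{} via $x_{\kappa\kappa_A\kappa_B}=d_{\kappa\kappa_B\kappa_A}/S$. The observation that the asymmetric constraint $(\kappa_B,\lambda_A)\neq(A,B)$ in the relation $\to$ is precisely what accommodates the swap is exactly the paper's justification.
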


We are now ready to prove the two main results of the paper.

\begin{theorem}
\label{thm:embed-2mod}
Let $m\in\NN$.
For every $\delta>0$, there exists $N\in\NN$ such that
every $N$-partitioned hypergraph $H$ with density at least $4/27+\delta$
embeds the tight $3$-uniform cycle $C_{3m+2}^{(3)}$.
\end{theorem}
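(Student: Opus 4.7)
The plan is to apply Theorem~\ref{thm:base} with parameter $\delta$ and an $n$ chosen to be sufficiently large in terms of $m$, obtaining an index set $I=\{i_1,\dots,i_{4n}\}$ together with one of the four structural conclusions; where the last case requires the mirror variant, Theorem~\ref{thm:base-swap} is invoked instead. The proof then reduces to exhibiting an explicit embedding of $C_{3m+2}^{(3)}$ in each of these (effectively) four cases, since the partitioned-embedding of a tight cycle is precisely what Theorem~\ref{thm:reiher} requires for the density upper bound.

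In each of cases 1--3 the conclusion supplies a rigid ``edge gadget'' that is guaranteed in every triad (cases 1--2) or in every quintuple of indices (case 3). The idea is to assign the $3m+2$ cycle vertices $u_0,\dots,u_{3m+1}$ to indices of $I$ via a \emph{zigzag} permutation rather than in the cyclic order; this reassigns the left/right/top roles that each shared pair-vertex $v_{p,p+1}$ has to play in the two consecutive cycle edges it belongs to. The vertices $\alpha,\beta,\omega,\alpha^\omega,\gamma^0,\gamma^\omega$ (or the case-3 analogues $\alpha',\beta',\alpha^\bullet,\beta^\bullet,\gamma^\bullet$) then supply just enough compatible gluing options to realize the construction, with the congruence $3m+2\equiv 2\pmod 3$ precisely matching the number of ``transition'' positions that the zigzag produces. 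For example, in case 1 with $\ell=5$, the zigzag $u_p\mapsto a_{2p\bmod 5}$ forces all but one of the pair-vertices into pure left, right or top roles, and the remaining ``mixed-role'' pair-vertex is realized by the common $\omega$-vertex shared by the edges of types $\{\alpha^\omega,\omega,\gamma^0\}$ and $\{\omega,\beta,\gamma^\omega\}$; an analogous zigzag works for every $m\ge 1$, with case 3's $G$--$H$--$I$ chain playing the role of the basic gadget to be iterated along the cycle.

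For case 4 (and its swap variant supplied by Theorem~\ref{thm:base-swap}) the plan is to split the cycle into two arcs. On one arc, each cycle edge is realized as an $\{\alpha^1,\beta^1,\gamma^1\}$-edge (so its top vertex is a $\gamma^1$-vertex), while on the other arc each edge is an $\{\alpha^2,\beta^2,\gamma^2\}$-edge. The edges $\{\gamma^1,\beta^*,\gamma\}$ and $\{\alpha^*,\gamma^2,\gamma\}$ act as transitions between the arcs, letting $\gamma^1$ play the role of a left vertex and $\gamma^2$ that of a right vertex in a $\gamma$-top edge. The cycle is then closed by routing it through the special edge of the $(i_{2n},i_{2n+1},i_{2n+2})$-triad containing $\alpha$ and $\beta$: the stipulated adjacencies of $\alpha$ to $\gamma^2_{i_{2n},i_{3n+1}}$ and of $\beta$ to $\gamma^1_{i_n,i_{2n+2}}$ are exactly what is needed to let the closing edge meet the two arcs consistently at the positions indexed by $i_n$ and $i_{3n+1}$.

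The main technical obstacle is the bookkeeping for cases 1--3: for each case one has to verify that the zigzag really does produce a globally consistent assignment of all $\binom{3m+2}{2}$ pair-vertices for every $m\ge 1$, which comes down to a parity/transition analysis showing that the number of ``mixed-role'' pair-vertices along the cycle matches what the given edges can accommodate---this is exactly where the congruence $3m+2\equiv 2\pmod 3$ enters and where the length-$5$ construction generalizes. Case 4 is conceptually dictated by the shape of the special edge, but its delicate point is to ensure that the two arcs, of lengths summing to $3m+2$, can be laid out on $I$ in such a way that their meeting point falls on the prescribed triad $(i_{2n},i_{2n+1},i_{2n+2})$ and the transitions through the $P$- and $Q$-edges occur on admissible index triples.
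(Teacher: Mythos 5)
Your high-level plan---apply Theorem~\ref{thm:base}, split into the four structural cases, and exhibit an explicit embedding in each---is exactly the route the paper takes, and your ``zigzag'' intuition for the index assignments in cases 1--3 (interleaving so that a pair-vertex' left/right/top role changes along the cycle in a way the guaranteed gadgets can realize) is qualitatively correct. However, there are two genuine gaps.

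First, the claim that Theorem~\ref{thm:base-swap} is invoked ``where the last case requires the mirror variant'' is a misunderstanding: once you apply Theorem~\ref{thm:base}, you are given \emph{one} of its four outcomes with no control over which, so you must handle all four outcomes of that same theorem. In fact Theorem~\ref{thm:base-swap} plays no role in the $3m+2$ proof at all; it is applied instead of Theorem~\ref{thm:base} in Theorem~\ref{thm:embed-1mod} (the $\ell\equiv 1\pmod 3$ case), because the orientation of the special edge in case~4 (whether $\alpha$ attaches to $\gamma^2$ and $\beta$ to $\gamma^1$, or vice versa) is what matches the residue of $\ell$ modulo~$3$. Second, the explicit index sequences and vertex choices are precisely where the theorem's content lies and they are not produced here; you correctly flag this as ``the main technical obstacle,'' but without verifying a consistent global assignment of all pair-vertices (and, in case~4, that the splice between the $\gamma^1/\gamma^2$ chains lands on the prescribed $(i_{2n},i_{2n+1},i_{2n+2})$-triad and the $\gamma$-top transition edges sit on admissible triples), the argument is a sketch rather than a proof. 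For reference, the paper's case~4 embedding does not balance two arcs between $\gamma^1$- and $\gamma^2$-top edges; it uses a single $\gamma^2$-top chain of length growing with $m$, with a short $\gamma^1\!\to\!\beta\!\to\!\alpha\!\to\!\gamma^2$ prefix exploiting the special edge and a single $\gamma$-top edge at the far end, so the symmetric two-arc plan you describe would need its own case-by-case verification even if it can be made to work.
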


\begin{figure}[t]
\begin{center}
\epsfbox{turan427-7.mps}
\vskip 8mm
\epsfbox{turan427-1.mps}
\vskip 8mm
\epsfbox{turan427-2.mps}
\vskip 8mm
\epsfbox{turan427-3.mps}
\end{center}
\caption{Embeddings of $C_5^{(3)}$ and $C_8^{(3)}$ in the four cases from Theorem~\ref{thm:embed-2mod}.
         The points correspond to the indices and the arcs to the parts of a partitioned hypergraph;
	 the types of vertices are indicated by Greek letters as in the notation of Theorem~\ref{thm:base}.}
\label{fig:embed-2mod}
\end{figure}

\begin{proof}
We apply Theorem~\ref{thm:base} with $\delta$ and $n=2m$ to get $N$.
Let $H$ be an $N$-partitioned hypergraph with density at least $4/27+\delta$.
Apply Theorem~\ref{thm:base} to get a $4n$-partitioned hypergraph $H_0$ with index set $I=\{i_1,\ldots,i_{4n}\}$ that
has the properties described in the statement of Theorem~\ref{thm:base}.
To establish the theorem,
it is enough to show that $H_0$ embeds the tight $3$-uniform cycle $C_\ell^{(3)}$,
i.e., that
there exist distinct indices $a_1,\ldots,a_{3m+2}\in I$ and vertices $v_k\in V_{a_ka_{k+1}}$ such that
for every $k\in [3m+2]$,
the vertices $v_k$ and $v_{k+1}$ are contained in an edge of the triad formed
by the parts $V_{i_{a_k}i_{a_{k+1}}}$, $V_{i_{a_{k+1}}i_{a_{k+2}}}$ and $V_{i_{a_k}i_{a_{k+2}}}$ (the subscripts are modulo $3m+2$).
To avoid using a large number of double indices in what follows,
we will deviate from our standard notation and
write $V_{ij}$, $i<j$, $i,j\in [4n]$, for the parts of the $4n$-partitioned hypergraph $H_0$ rather than the parts of $H$.

We distinguish four cases based on which of the four cases described in the statement of Theorem~\ref{thm:base} applies;
we also refer to Figure~\ref{fig:embed-2mod} for illustration of constructions used in the cases.

We start with assuming that the first case applies and let
$\alpha_{ij},\beta_{ij},\gamma_{ij}^0,\omega_{ij},\alpha^{\omega}_{ij},\gamma^{\omega}_{ij}\in V_{ij}$, $i<j$, $i,j\in [4n]$
be the vertices from the description of the case.
We choose the indices $a_1,a_2,\ldots,a_{3m+2}$ as
\[ 1,\: m+2,\: 2m+3,\: 2,\: m+3,\: 2m+4,\: \ldots,\: m,\: 2m+1,\: 3m+2,\: m+1,\: 2m+2 \]
and the vertices $v_1,\ldots,v_{3m+2}$ are chosen as
\begin{align*}
& \alpha_{1,m+2},\, \beta_{m+2,2m+3},\, \gamma^0_{2,2m+3},\,
  \alpha_{2,m+3},\, \beta_{m+3,2m+4},\, \gamma^0_{3,2m+4},\, \ldots,
  \alpha_{m,2m+1},\, \beta_{2m+1,3m+2}, \gamma^{\omega}_{m+1,3m+2}, \\
&  \omega_{m+1,2m+2}, \gamma^0_{1,2m+2}.
\end{align*}
to get a witness that $H_0$ embeds $C_{3m+2}^{(3)}$.
Indeed, it holds that
\begin{itemize}
    \item $\alpha_{i,m+1+i}$ and $\beta_{m+1+i,2(m+1)+i}$ belong (with $\gamma^0_{i,2(m+1)+i}$) to an edge of the $(i,m+1+i, 2(m+1)+i)$-triad for all $i=1,\ldots,m$,
    \item $\beta_{m+1+i,2(m+1)+i}$ and $\gamma^0_{i+1,2(m+1)+i}$ belong (with $\alpha_{i+1,m+1+i}$) to an edge of the $(i+1,m+1+i, 2(m+1)+i)$-triad for all $i=1,\ldots,m-1$,
    \item  $\gamma^0_{i+1,2(m+1)+i}$ and $\alpha_{i+1,m+1+i+1}$ belong (with $\beta_{m+1+i+1,2(m+1)+i}$) to an edge of the $(i+1,m+1+i+1, 2(m+1)+i)$-triad for for all $i=1,\ldots,m-1$,
    \item $\beta_{2m+1,3m+2}$ and $\gamma^{\omega}_{m+1,3m+2}$ belong (with $\omega_{m+1,2m+1}$) to an edge of the $(m+1,2m+1, 3m+2)$-triad,
    \item $\gamma^{\omega}_{m+1,3m+2}$and $\omega_{m+1,2m+2}$ belong (with $\beta_{2m+2,3m+2}$) to an edge of the $(m+1,2m+2, 3m+2)$-triad,
    \item $\omega_{m+1,2m+2}$ and $\gamma^0_{1,2m+2}$ belong (with $\alpha^{\omega}_{1,m+1}$) to an edge of the $(1,m+1,2m+2)$-triad, and
    \item $\gamma^0_{1,2m+2}$ and $\alpha_{1,m+2}$ belong (with $\beta_{m+2,2m+2}$) to an edge of the $(1,m+2,2m+2)$-triad.
\end{itemize} 

The second case is symmetric to the first case.
let $\alpha_{ij},\beta_{ij},\gamma_{ij}^0,\omega_{ij},\beta^{\omega}_{ij},\gamma^{\omega}_{ij}\in V_{ij}$, $i<j$, $i,j\in [4n]$,
be the vertices from the description of the case.
We choose the indices $a_1,a_2,\ldots,a_{3m+2}$ as
\[ 1,\: m+2,\: 2m+3,\: 2,\: m+3,\: 2m+4,\: \ldots,\: m,\: 2m+1,\: 3m+2,\: m+1,\: 2m+2 \]
and the vertices $v_1,\ldots,v_{3m+2}$ as
\begin{align*}
& \alpha_{1,m+2},\, \beta_{m+2,2m+3},\, \gamma_{2,2m+3}^0,\,
  \alpha_{2,m+3},\, \beta_{m+3,2m+4},\, \gamma_{3,2m+4}^0,\, \ldots,
  \alpha_{m,2m+1},\, \beta_{2m+1,3m+2}, \gamma_{m+1,3m+2}^0,\\ 
& \omega_{m+1,2m+2}, \gamma^{\omega}_{1,2m+2}
\end{align*}
to get a witness that $H_0$ embeds $C_{3m+2}^{(3)}$.

We next assume that the third case applies and
let $\alpha_{ij},\beta_{ij},\alpha'_{ij},\beta'_{ij},\gamma_{ij}^0,\gamma^{\bullet}_{ij}\in V_{ij}$, $i<j, i,j\in [4n]$, and
$\beta^{\bullet}_{ijkst}\in V_{jk}$ and $\alpha^{\bullet}_{ijkst}\in V_{ks}$, $i<j<k<s<t, i,j,k,s,t,\in [4n]$,
be the vertices from the description of the case.
We choose the indices $a_1,a_2,\ldots,a_{3m+2}$ as
\[ 1,\: m+1,\: 2m+1,\: 2,\: m+2,\: 2m+2,\: \ldots,\: m,\: 2m,\: 3m,\: 3m+1,\: 3m+2 \]
and the vertices $v_1,\ldots,v_{3m+2}$ as
\begin{align*}
& \alpha_{1,m+1},\, \beta_{m+1,2m+1},\, \gamma^0_{2,2m+1},\, 
  \alpha_{2,m+2},\, \beta_{m+2,2m+2},\, \gamma^0_{3,2m+2},\, \ldots,
                                        \gamma^0_{m,3m-1},\,
  \alpha'_{m,2m},\\
& \beta^{\bullet}_{m,2m,3m,3m+1,3m+2},\,
  \alpha^{\bullet}_{m,2m,3m,3m+1,3m+2},\, \beta'_{3m+1,3m+2},\, \gamma^0_{1,3m+2}
\end{align*}
to get a witness that $H_0$ embeds $C_{3m+2}^{(3)}$.

Finally, we deal with the fourth case.
Let $\alpha^*_{ij}$, $\beta^*_{ij}$, $\gamma_{ij}$, $\alpha^1_{ij}$, $\beta^1_{ij}$, $\gamma^1_{ij}$, $\alpha^2_{ij}$, $\beta^2_{ij}$, $\gamma^2_{ij}\in V_{ij}$, $i<j, i,j\in [4n]$, and
$\alpha\in V_{i_{2n},i_{2n+1}}$ and $\beta\in V_{i_{2n+1},i_{2n+2}}$ be the vertices as
in the description of the fourth case of Theorem~\ref{thm:base}.
In this case, we choose $a_1,a_2,\ldots,a_{3m+2}$ as
\begin{align*}
& n,\: 2n+2,\: 2n+1,\: 2n,\: 3n+1,\: 3n,\: 3n-1,\: 3n+4,\: 3n+3,\: 3n+2,\: 3n+7,\: \ldots,\: 3(n+m-1)+1
\end{align*}
and the vertices $v_1,\ldots,v_{3m+2}$ as
\begin{align*}
& \gamma^1_{n,2n+2},\,\beta_{2n+1,2n+2},\,\alpha_{2n,2n+1},\,\gamma^2_{2n,3n+1},\\
& \beta^2_{3n,3n+1},\, \alpha^2_{3n-1,3n},\, \gamma^2_{3n-1,3n+4},\, \ldots, \\
& \beta^2_{3(n+m-1)-3,3(n+m-1)-2},\, \alpha^2_{3(n+m-1)-4,3(n+m-1)-3},\, \gamma^2_{3(n+m-1)-4,3(n+m-1)+1},\\
& \gamma_{n,3(n+m-1)+1}.
\end{align*}
In particular, if $m=1$, the vertices $v_1,\ldots,v_5$ are
\[\gamma^1_{n,2n+2},\,\beta_{2n+1,2n+2},\,\alpha_{2n,2n+1},\,\gamma^2_{2n,3n+1},\,\gamma_{n,3n+1}.\]
The proof of the theorem is now complete.
\end{proof}

\begin{theorem}
\label{thm:embed-1mod}
Let $m\in\NN$.
For every $\delta>0$, there exists $N\in\NN$ such that
every $N$-partitioned hypergraph $H$ with density at least $4/27+\delta$
embeds the tight $3$-uniform cycle $C_{3m+4}^{(3)}$.
\end{theorem}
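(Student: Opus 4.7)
The plan is to mirror the proof of Theorem~\ref{thm:embed-2mod}, adapting each of the four embedding constructions so that the resulting tight cycle has length $3m+4$ instead of $3m+2$. First, I would apply Theorem~\ref{thm:base} (and, in one of the cases, also Theorem~\ref{thm:base-swap}) with $\delta$ and a suitable $n=n(m)$ to obtain an $N$ such that every $N$-partitioned hypergraph $H$ with density at least $4/27+\delta$ admits a $4n$-partitioned subhypergraph $H_0$ with index set $I=\{i_1,\ldots,i_{4n}\}$ satisfying one of the four structural outcomes. It then suffices, in each outcome, to write down $3m+4$ distinct indices $a_1,\ldots,a_{3m+4}\in I$ and vertices $v_k\in V_{a_ka_{k+1}}$ such that every cyclically consecutive triple $v_{k-1},v_k,v_{k+1}$ lies on an edge of the triad formed by $V_{a_{k-1}a_k}$, $V_{a_ka_{k+1}}$, $V_{a_{k-1}a_{k+1}}$.

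For the first three outcomes of Theorem~\ref{thm:base}, I would modify the ``stair walks'' of the proof of Theorem~\ref{thm:embed-2mod} by inserting one additional stair of three indices and then adjusting the closing segment so that the total cycle length increases by two rather than three. Concretely, in case~1, one takes a stair pattern through indices of the form $\{i,m+2+i,2m+4+i\}$ (for a suitable range of $i$) using the edges $\{\alpha_{ij},\beta_{jk},\gamma^0_{ik}\}$, and then closes the cycle by inserting the $\omega,\gamma^\omega,\alpha^\omega$ transition at a point chosen so as to match the required length $3m+4$; cases 2 and~3 are handled symmetrically via the $\beta^\omega$-edges and via the $\alpha^\bullet,\beta^\bullet,\gamma^\bullet$-edges, respectively, with an analogous adjustment. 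A figure analogous to Figure~\ref{fig:embed-2mod} would illustrate each of these embeddings.

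The fourth outcome is the most delicate step, just as in the proof of Theorem~\ref{thm:embed-2mod}. Here the extra edge containing $\alpha$ and $\beta$ in the central $(i_{2n},i_{2n+1},i_{2n+2})$-triad is the ``shortcut'' that forces the cycle length to be incongruent to $0\pmod 3$; and because $3m+4\equiv 1\pmod 3$ instead of $3m+2\equiv 2\pmod 3$, I would invoke Theorem~\ref{thm:base-swap} rather than Theorem~\ref{thm:base}, exploiting the interchange of the roles of $\gamma^1$ and $\gamma^2$ in the description of the additional edge. The explicit index sequence is then obtained from the one used in case~4 of Theorem~\ref{thm:embed-2mod} by extending the walk through the $\gamma,\gamma^1,\gamma^2$ stairs by one more triple of indices so that the total length becomes $3m+4$.

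The main obstacle is the combinatorial bookkeeping: in each of the four cases one has to write down an explicit cyclic sequence of indices and vertex types and verify, in the manner of the seven-bullet verification for case~1 in the proof of Theorem~\ref{thm:embed-2mod}, that every cyclically consecutive triple of chosen vertices is one of the edges guaranteed by Theorem~\ref{thm:base} or~\ref{thm:base-swap}. The subtle point is that in case~4 one must invoke Theorem~\ref{thm:base-swap} rather than Theorem~\ref{thm:base}, since the residue of the cycle length mod~$3$ dictates which of the two possible orientations of the shortcut edge yields a valid closure.
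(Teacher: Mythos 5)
Your high-level framework is right, and you identify the genuinely crucial point: invoking Theorem~\ref{thm:base-swap} so that the ``shortcut'' in the fourth structural outcome has the orientation compatible with a cycle length that is $1 \bmod 3$ rather than $2 \bmod 3$. However, the content of this theorem is the four explicit cyclic sequences of indices and vertex types, and those you leave as a hand-wave; moreover, the heuristic descriptions you give of how they would arise from the $3m+2$ constructions do not actually produce length $3m+4$. For the fourth outcome you propose ``extending the walk through the $\gamma,\gamma^1,\gamma^2$ stairs by one more triple of indices,'' but that changes the length by $3$, from $3m+2$ to $3m+5$, not to $3m+4$. What the paper actually does in that case is add exactly two new items: a vertex $\alpha^*_{n-1,n}$ prepended to the walk (together with a new smallest index $n-1$) and a vertex $\beta^*$ inserted just before the final $\gamma$ (together with a new largest index), exploiting the two ``hinge'' edges $\{\gamma^1_{ij},\beta^*_{jk},\gamma_{ik}\}$ and $\{\alpha^*_{ij},\gamma^2_{jk},\gamma_{ik}\}$ that the fourth outcome guarantees; simultaneously, the roles of the $\gamma^1$- and $\gamma^2$-type stairs are interchanged. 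Similarly, for the first three outcomes your plan of ``one additional stair, then trim the closing so the total goes up by two'' would require a closing segment that is one vertex shorter than the one in the $3m+2$ construction, which is not available: the closing there already uses the minimum number of hinge edges ($\omega,\gamma^{\omega},\alpha^{\omega}$ or their analogues). The paper's $3m+4$ constructions for these cases instead use one \emph{fewer} stair and a longer (seven-vertex) special segment, traversing the two hinge edges in a different order. A minor additional point: you cannot separately apply Theorem~\ref{thm:base} for three of the outcomes and Theorem~\ref{thm:base-swap} ``in one of the cases''; the two theorems produce a priori different subhypergraphs $H_0$, so the argument must invoke a single one of them -- the paper applies Theorem~\ref{thm:base-swap} once and works with its four outcomes.
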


\begin{figure}
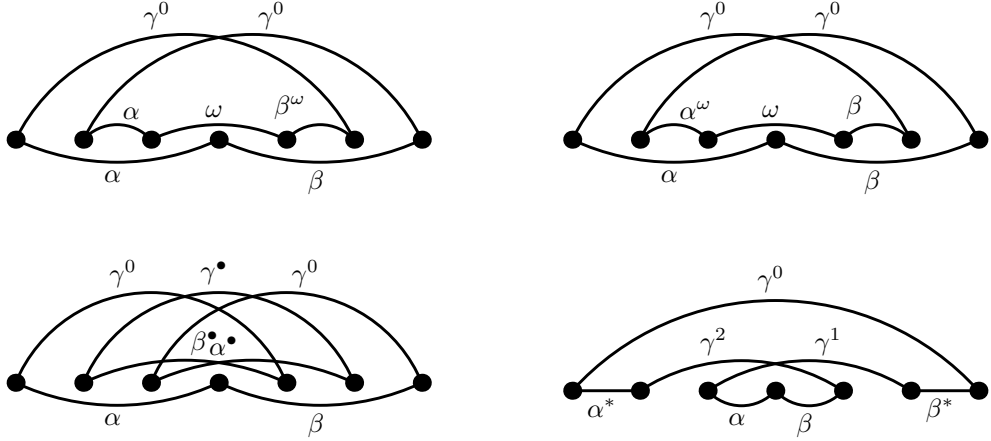

\begin{center}
\epsfbox{turan427-4.mps}
\hspace{1.5cm}
\epsfbox{turan427-8.mps}
\vskip 8mm
\epsfbox{turan427-5.mps}
\hspace{1.5cm}
\epsfbox{turan427-6.mps}
\end{center}
\caption{Embeddings of $C_7^{(3)}$ in the four cases from Theorem~\ref{thm:embed-1mod}.
         The points correspond to the indices and the arcs to the parts of a partitioned hypergraph;
	 the types of vertices are indicated by Greek letters as in the notation of Theorem~\ref{thm:base-swap}.}
\label{fig:embed-1mod}
\end{figure}

\begin{proof}
The proof follows along the same lines as the proof of Theorem~\ref{thm:embed-2mod}
but we need to choose the indices $a_1,\ldots,a_{3m+4}$ and vertices $v_1,\ldots,v_{3m+4}$ differently.
Let $N$ be the integer obtained by applying Theorem~\ref{thm:base-swap} with $\delta$ and $n=2m$.
Consider an $N$-partitioned hypergraph $H$ with with density at least $4/27+\delta$ and
let $H_0$ be the $4n$-partitioned hypergraph with index set $I=\{i_1,\ldots,i_{4n}\}$
as in the statement of Theorem~\ref{thm:base-swap}.
We will next find distinct indices $a_1,\ldots,a_{3m+4}\in I$ and vertices $v_k\in V_{i_{a_k}i_{a_{k+1}}}$, $k\in [3m+4]$, such that
the vertices $v_k$ and $v_{k+1}$ are contained in an edge of the triad formed
by the parts $V_{i_{a_k}i_{a_{k+1}}}$, $V_{i_{a_{k+1}}i_{a_{k+2}}}$ and $V_{i_{a_k}i_{a_{k+2}}}$ (the subscripts are modulo $3m+4$).
As in the proof of Theorem~\ref{thm:embed-2mod},
we deviate from our standard notation and
write $V_{ij}$, $i<j$, $i,j\in [4n]$, for the parts of the $4n$-partitioned hypergraph $H_0$.

We next distinguish four cases based on which of the cases described in the statement of Theorem~\ref{thm:base-swap} applies;
the illustration of the used constructions is given in Figure~\ref{fig:embed-1mod}.
We start with assuming that the first case applies and
let $\alpha_{ij},\beta_{ij},\gamma_{ij}^0,\omega_{ij},\beta^{\omega}_{ij},\gamma^{\omega}_{ij}\in V_{ij}$, $i<j$, $i,j\in [4n]$, be the vertices from the description of the case.
We choose the indices $a_1,a_2,\ldots,a_{3m+4}$ as
\begin{align*}
& 2m+2, 3m+4, 2, 3, 3m+2, 3m+3, 1,\\
& m+3, 2m+3, 4, m+4, 2m+4, 5, \ldots, m+1, 2m+1, 3m+1, m+2
\end{align*}
where the last $m-1$ indices are formed by triples $m+2+i,2m+2+i, 3+i$ for $i\in [m-1]$.
We next choose the vertices $v_1,\ldots,v_{3m+4}$ as
\begin{align*}
& \beta_{2m+2,3m+4},\, \gamma^0_{2,3m+4},\, \alpha_{2,3},\, \omega_{3,3m+2},\, \beta^{\omega}_{3m+2,3m+3},\, \gamma^0_{1,3m+3},\, \alpha_{1,m+3},\, \\
& \beta_{m+3,2m+3},\, \gamma^0_{4,2m+3},\, \alpha_{4,m+4},\, \beta_{m+4,2m+4},\, \gamma^0_{5,2m+4},\, \ldots, \alpha_{m+2,2m+2}
\end{align*}
to get a witness that $H_0$ embeds $C_{3m+4}^{(3)}$.
In particular, if $m=1$, the vertices $v_1,\ldots,v_7$ are
$\beta_{4,7}$, $\gamma^0_{2,7}$, $\alpha_{2,3}$, $\omega_{3,5}$, $\beta^{\omega}_{5,6}$, $\gamma^0_{1,6}$ and $\alpha_{1,4}$.

If the second case applies,
let $\alpha_{ij},\beta_{ij},\gamma^0_{ij},\omega_{ij},\alpha^{\omega}_{ij},\gamma^{\omega}_{ij}\in V_{ij}$, $i<j$, $i,j\in [4n]$
be the vertices from the description of the case.
The indices $a_1,a_2,\ldots,a_{3m+4}$ are chosen the same as before so
\begin{align*}
& 2m+2, 3m+4, 2, 3, 3m+2, 3m+3, 1, \\
& m+3, 2m+3, 4, m+4, 2m+4, 5, \ldots, m+1, 2m+1, 3m+1, m+2
\end{align*}
but the vertices $v_1,\ldots,v_{3m+4}$ are chosen as
\begin{align*}
& \beta_{2m+2,3m+4},\, \gamma^0_{2,3m+4},\, \alpha^{\omega}_{2,3},\, \omega_{3,3m+2},\, \beta_{3m+2,3m+3},\, \gamma_{1,3m+3},\, \alpha_{1,m+3},\, \\
& \beta_{m+3,2m+3},\, \gamma^0_{4,2m+3},\, \alpha_{4,m+4},\, \beta_{m+4,2m+4},\, \gamma^0_{5,2m+4},\, \ldots, \alpha_{m+2,2m+2}
\end{align*}
to get a witness that $H_0$ embeds $C_{3m+4}^{(3)}$.

If the third case applies,
let $\alpha_{ij},\beta_{ij},\alpha'_{ij},\beta'_{ij},\gamma_{ij}^0,\gamma^{\bullet}_{ij}\in V_{ij}$, $i,j\in [4n]$, and
$\beta^{\bullet}_{ijkst}\in V_{jk}$ and $\alpha^{\bullet}_{ijkst}\in V_{ks}$, $i,j,k,s,t,\in [4n]$,
be the vertices from the description of the case, and
choose $a_1,a_2,\ldots,a_{3m+4}$ as
\begin{align*}
& 2m+2, 3m+4, 3, 3m+3, 2, 3m+2, 1, \\
& m+3, 2m+3, 4, m+4, 2m+4, \ldots, m+1, 2m+1, 3m+1, m+2
\end{align*}
where compared to the two cases earlier, only the first seven indices differ.
The vertices $v_1,\ldots,v_{3m+4}$ are next chosen as
\begin{align*}
& \beta_{2m+2,3m+4},\, \gamma^0_{3,3m+4},\, \alpha^{\bullet}_{1,2,3,3m+3,3m+4},\, \gamma^{\bullet}_{2,3m+3},\, \beta^{\bullet}_{1,2,3m+2,3m+3,3m+4},\, \gamma^0_{1,3m+2},\, \alpha_{1,m+3},\, \\
& \beta_{m+3,2m+3},\, \gamma^0_{4,2m+3},\, \alpha_{4,m+4},\, \beta_{m+4,2m+4},\, \gamma^0_{5,2m+4},\, \ldots, \alpha_{m+2,2m+2}
\end{align*}
to get a witness that $H_0$ embeds $C_{3m+4}^{(3)}$.
In particular, if $m=1$, the cycle is formed by the vertices
$\alpha_{1,4}$, $\beta_{4,7}$, $\gamma^0_{3,7}$, $\alpha^{\bullet}_{1,2,3,6,7}$, $\gamma^{\bullet}_{2,6}$, $\beta^{\bullet}_{1,2,5,6,7}$, $\gamma^0_{1,5}$.

Finally, if the last case of Theorem~\ref{thm:base-swap} applies, we proceed as follows.
Let $\alpha^*_{ij}$, $\beta^*_{ij}$, $\gamma_{ij}$, $\alpha^1_{ij}$, $\beta^1_{ij}$, $\gamma^1_{ij}$, $\alpha^2_{ij}$, $\beta^2_{ij}$, $\gamma^2_{ij}\in V_{ij}$, $i,j\in [4n]$, and
$\alpha\in V_{i_{2n}i_{2n+1}}$ and $\beta\in V_{i_{2n+1}i_{2n+2}}$ be the vertices as in the description of the case.
We choose the indices $a_1,a_2,\ldots,a_{3m+4}$ as
\begin{align*}
& n-1, n, 2n+2, 2n+1, 2n, 3n+1,\\
& 3n, 3n-1, 3n+4, 3n+3, 3n+2, 3n+7, \ldots, 3(n+m-1)-4, 3(n+m-1)+1,\\
& 3(n+m-1)+2
\end{align*}
and the vertices $v_1,\ldots,v_{3m+4}$ as
\begin{align*}
& \alpha^*_{n-1,n},\,\gamma^2_{n,2n+2},\,\beta_{2n+1,2n+2},\,\alpha_{2n,2n+1},\,\gamma^1_{2n,3n+1},\\
& \beta^1_{3n,3n+1},\, \alpha^1_{3n-1,3n},\, \gamma^1_{3n-1,3n+4},\, \ldots, \\
& \beta^1_{3(n+m-1)-3,3(n+m-1)-2},\, \alpha^1_{3(n+m-1)-4,3(n+m-1)-3},\, \gamma^1_{3(n+m-1)-4,3(n+m-1)+1},\\
& \beta^*_{3(n+m-1)+1,3(n+m-1)+2},\gamma_{n-1,3(n+m-1)+2}.
\end{align*}
to get a witness that $H_0$ embeds $C_{3m+4}^{(3)}$.
In particular, if $m=1$, the vertices $v_1,\ldots,v_7$ are
\[\alpha^*_{n-1,n},\,\gamma^2_{n,2n+2},\,\beta_{2n+1,2n+2},\,\alpha_{2n,2n+1},\,\gamma^1_{2n,3n+1},\,\beta^*_{3n+1,3n+2},\,\gamma_{n-1,3n+2}.\]
The proof of the theorem is now complete.
\end{proof}

Theorems~\ref{thm:embed-2mod} and~\ref{thm:embed-1mod} yield the following.

\begin{theorem}
\label{thm:main}
Let $\ell\ge 5$ be a positive integer that is not divisible by three.
The uniform Tur\'an density of the tight $3$-uniform cycle $C_\ell^{(3)}$ is equal to $4/27$.
\end{theorem}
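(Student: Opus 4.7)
The plan is to obtain Theorem~\ref{thm:main} by combining the matching lower and upper bounds that have already been assembled in the preceding sections. The lower bound, that the uniform Tur\'an density of $C_\ell^{(3)}$ is at least $4/27$, is exactly the content of Proposition~\ref{prop:lower}, so nothing further is needed there.

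For the upper bound, I would first reduce the claim to the two embedding statements proved above. Every integer $\ell\ge 5$ not divisible by three is either of the form $3m+2$ with $m\ge 1$ (covering $5,8,11,\dots$) or of the form $3m+4$ with $m\ge 1$ (covering $7,10,13,\dots$). In the first case, Theorem~\ref{thm:embed-2mod} supplies, for every $\delta>0$, an $N\in\NN$ such that every $N$-partitioned hypergraph with density at least $4/27+\delta$ embeds $C_\ell^{(3)}$; in the second case, Theorem~\ref{thm:embed-1mod} provides the same conclusion. Hence, for every $\ell\ge 5$ not divisible by three and every $\delta>0$, there exists $N$ such that every $N$-partitioned hypergraph with density at least $4/27+\delta$ embeds $C_\ell^{(3)}$.

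Applying Reiher's reduction, Theorem~\ref{thm:reiher}, to this statement (with $d=4/27$) now yields that the uniform Tur\'an density of $C_\ell^{(3)}$ is at most $4/27$. Together with the matching lower bound from Proposition~\ref{prop:lower}, this proves that the uniform Tur\'an density of $C_\ell^{(3)}$ equals $4/27$ for every $\ell\ge 5$ not divisible by three. No genuine obstacle arises at this stage, since the entire technical heart of the argument, namely verifying the hypothesis of Theorem~\ref{thm:reiher} via the embedding constructions, has already been carried out in Sections~\ref{sec:specific}, \ref{sec:optim} and in the proofs of Theorems~\ref{thm:embed-2mod} and~\ref{thm:embed-1mod}.
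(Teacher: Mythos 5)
Your proposal is correct and follows exactly the same route as the paper: the lower bound comes from Proposition~\ref{prop:lower}, while the upper bound is obtained by splitting into the residues $\ell\equiv 2$ and $\ell\equiv 1$ modulo three, invoking Theorems~\ref{thm:embed-2mod} and~\ref{thm:embed-1mod} respectively, and then applying Reiher's reduction (Theorem~\ref{thm:reiher}).
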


\begin{proof}
Fix $\ell\ge 5$ that is not divisible by three.
The uniform Tur\'an density of $C_\ell^{(3)}$
is at least $4/27$ by Proposition~\ref{prop:lower}.
If $\ell$ is equal to $2$ modulo $3$,
then the uniform Tur\'an density of $C_\ell^{(3)}$ is at most $4/27$
by Theorem~\ref{thm:reiher} and Theorem~\ref{thm:embed-2mod};
otherwise,
the uniform Tur\'an density of $C_\ell^{(3)}$ is at most $4/27$
by Theorem~\ref{thm:reiher} and Theorem~\ref{thm:embed-1mod}.
\end{proof}

Theorem~\ref{thm:cycles-main} follows from Theorem~\ref{thm:main} and Proposition~\ref{prop:cycle3}.

\section*{Acknowledgments}

The authors would like to thank Ander Lamaison Vidarte for carefully reading and commenting on an early draft of the paper.

\bibliographystyle{bibstyle}
\bibliography{turan427}

\begin{thebibliography}{10}
\providecommand{\url}[1]{\texttt{#1}}
\providecommand{\urlprefix}{URL }
\providecommand{\eprint}[2][]{\url{#2}}

\bibitem{BabT11}
R.~Baber and J.~Talbot: \emph{Hypergraphs do jump}, Combinatorics, Probability
  and Computing \textbf{20} (2011), 161--171.

\bibitem{BalCL21}
J.~Balogh, F.~C. Clemen and B.~Lidick\'y: \emph{Hypergraph tur\'an problems in
  $\ell_2$-norm} (2021), preprint arXiv:2108.10406.

\bibitem{ChuL99}
F.~Chung and L.~Lu: \emph{An upper bound for the {T}ur\'an number $t_3(n,4)$},
  Journal of Combinatorial Theory Series A \textbf{87} (1999), 381--389.

\bibitem{Con10}
D.~Conlon: \emph{An extremal theorem in the hypercube}, Electron. J. Combin.
  \textbf{17} (2010), R111, 7pp.

\bibitem{Erd64}
P.~Erd{\H o}s: \emph{On extremal problems of graphs and generalized graphs},
  Israel Journal of Mathematics \textbf{2} (1964), 183--190.

\bibitem{Erd81}
P.~Erd{\H o}s: \emph{On the combinatorial problems which i would most like to
  see solved}, Combinatorica \textbf{1} (1981), 25--42.

\bibitem{Erd90}
P.~Erd{\H o}s: \emph{Problems and results on graphs and hypergraphs:
  similarities and differences}, in: J.~Ne\v{s}et\v{r}il and V.~R\"odl (eds.),
  Mathematics of {R}amsey theory (1990), 223--233.

\bibitem{ErdS66}
P.~Erd{\H{o}}s and M.~Simonovits: \emph{A limit theorem in graph theory},
  Studia Sci. Math. Hungar. \textbf{1} (1966), 51--57.

\bibitem{ErdS82}
P.~Erd{\H{o}}s and V.~T. S{\'o}s: \emph{On {R}amsey-{T}ur{\'a}n type theorems
  for hypergraphs}, Combinatorica \textbf{2} (1982), 289--295.

\bibitem{ErdS46}
P.~Erd{\H o}s and A.~H. Stone: \emph{On the structure of linear graphs},
  Bulletin of the American Mathematical Society \textbf{52} (1946), 1087--1091.

\bibitem{FraF84}
P.~Frankl and Z.~F{\"u}redi: \emph{An exact result for 3-graphs}, Discrete
  Mathematics \textbf{50} (1984), 323--328.

\bibitem{GarKL21}
F.~Garbe, D.~Kr{\'a}l' and A.~Lamaison: \emph{Hypergraphs with minimum positive
  uniform {T}ur\'an density} (2021), preprint arXiv:2105.09883.

\bibitem{GleKV16}
R.~Glebov, J.~Volec and D.~Kr{\'a}l': \emph{A problem of {E}rd{\H{o}}s and
  {S}{\'o}s on 3-graphs}, Israel Journal of Mathematics \textbf{211} (2016),
  349--366.

\bibitem{HaxLPRRS09}
P.~E. Haxell, T.~\L{}uczak, Y.~Peng, V.~R\"odl, A.~Ruci\'nski and J.~Skokan:
  \emph{The {R}amsey number for 3-uniform tight hypergraph cycles},
  Combinatorics, Probability and Computing \textbf{18} (2009), 165--203.

\bibitem{HuaM19}
H.~Huang and J.~Ma: \emph{On tight cycles in hypergraphs}, SIAM J. Discrete
  Math. \textbf{33} (2019), 230--237.

\bibitem{Jan21}
B.~Janzer: \emph{Large hypergraphs without tight cycles} (2020), preprint
  arXiv:2012.07726.

\bibitem{KatNS64}
G.~Katona, T.~Nemetz and M.~Simonovits: \emph{On a problem of {T}ur{\'a}n in
  the theory of graphs}, Mat. Lapok \textbf{15} (1964), 228--238.

\bibitem{Kee11}
P.~Keevash: \emph{Hypergraph {T}ur{\'a}n problems}, in: R.~Chapman (ed.),
  Surveys in Combinatorics 2011, London Mathematical Society Lecture Note
  Series (2011), 83--140.

\bibitem{Let21}
S.~Letzter: \emph{Hypergraphs with no tight cycles} (2021), preprint
  arXiv:2106.12082.

\bibitem{Man07}
W.~Mantel: \emph{Problem 28}, Wiskundige Opgaven \textbf{10} (1907), 60--61.

\bibitem{MubPS}
D.~Mubayi, O.~Pikhurko and B.~Sudakov: \emph{Hypergraph {T}ur\'an {P}roblem:
  Some open questions} (2011),
  \url{https://homepages.warwick.ac.uk/\~{}maskat/Papers/TuranQuestions.pdf}.

\bibitem{MubR02}
D.~Mubayi and V.~R\"{o}dl: \emph{On the {T}ur\'{a}n number of triple systems},
  J. Combin. Theory Ser. A \textbf{100} (2002), 136--152.

\bibitem{Raz07}
A.~A. Razborov: \emph{Flag algebras}, J. Symbolic Logic \textbf{72} (2007),
  1239--1282.

\bibitem{Raz10}
A.~A. Razborov: \emph{On 3-hypergraphs with forbidden 4-vertex configurations},
  SIAM Journal on Discrete Mathematics \textbf{24} (2010), 946--963.

\bibitem{Rei20}
C.~Reiher: \emph{Extremal problems in uniformly dense hypergraphs}, European
  Journal of Combinatorics \textbf{88} (2020), 103117.

\bibitem{ReiRS16}
C.~Reiher, V.~R{\"o}dl and M.~Schacht: \emph{Embedding tetrahedra into
  quasirandom hypergraphs}, Journal of Combinatorial Theory Series B
  \textbf{121} (2016), 229--247.

\bibitem{ReiRS18}
C.~Reiher, V.~R{\"o}dl and M.~Schacht: \emph{Hypergraphs with vanishing
  {T}ur{\'a}n density in uniformly dense hypergraphs}, Journal of the London
  Mathematical Society \textbf{97} (2018), 77--97.

\bibitem{ReiRS18c}
C.~Reiher, V.~R{\"o}dl and M.~Schacht: \emph{On a generalisation of {M}antel's
  theorem to uniformly dense hypergraphs}, International Mathematics Research
  Notices \textbf{16} (2018), 4899--4941.

\bibitem{ReiRS18a}
C.~Reiher, V.~R{\"o}dl and M.~Schacht: \emph{On a {T}ur\'an problem in weakly
  quasirandom 3-uniform hypergraphs}, Journal of the European Mathematical
  Society \textbf{20} (2018), 1139--1159.

\bibitem{ReiRS18b}
C.~Reiher, V.~R{\"o}dl and M.~Schacht: \emph{Some remarks on $\pi$}, in:
  S.~Butler, J.~Cooper and G.~Hurlbert (eds.), Connections in Discrete
  Mathematics: A Celebration of the Work of Ron Graham (2018), 214--239.

\bibitem{Rod86}
V.~R{\"o}dl: \emph{On universality of graphs with uniformly distributed edges},
  Discrete Mathematics \textbf{59} (1986), 125--134.

\bibitem{RRS11}
V.~R\"odl, A.~Ruci\'nski and E.~Szemer\'edi: \emph{Dirac-type conditions for
  hamiltonian paths and cycles in 3-uniform hypergraphs}, Adv. Math.
  \textbf{227} (2011), 1225--1299.

\bibitem{Sid95}
A.~Sidorenko: \emph{What we know and what we do not know about {T}ur\'an
  numbers}, Graphs and Combinatorics \textbf{11} (1995), 179--199.

\bibitem{TomS21}
I.~Tomon and B.~Sudakov: \emph{The extremal number of tight cycles}, to appear
  in International Mathematics Research Notices.

\bibitem{Tur41}
P.~Tur{\'a}n: \emph{Eine {E}xtremalaufgabe aus der {G}raphentheorie}, Mat. Fiz.
  Lapok \textbf{48} (1941), 61.

\bibitem{Ver16}
J.~Verstra\"{e}te: \emph{Extremal problems for cycles in graphs}, in: Recent
  trends in combinatorics, \emph{IMA Vol. Math. Appl.}, volume 159 (2016),
  83--116.

\end{thebibliography}

\end{document}